\newtheorem{theorem}{Theorem}[section]
\newtheorem{lemma}[theorem]{Lemma}
\newtheorem{proposition}[theorem]{Proposition}
\newtheorem{conjecture}[theorem]{Conjecture}
\theoremstyle{remark}
\newtheorem{remark}[theorem]{Remark}
\theoremstyle{definition}
\newcommand{\N}{\mathbb{N}}
\newcommand{\F}{\mathbb{F}}
\newcommand{\f}{\mathcal{F}}
\newcommand{\I}{\mathcal{I}}
\renewcommand{\P}{\mathcal{P}}
\newcommand{\e}{\varepsilon}
\newcommand{\ga}{\gamma}
\newcommand{\ze}{\zeta}
\newcommand{\m}{\rule[0.25em]{4pt}{0.5pt}}
\newcommand{\ignore}[1]{}
\DeclarePairedDelimiter\floor{\lfloor}{\rfloor}
\title{On the Erd\H{o}s primitive set conjecture in function fields}
\author{Andr\'{e}s G\'{o}mez-Colunga, Charlotte Kavaler, Nathan McNew, and Mirilla Zhu}
\begin{document}
\maketitle 

\begin{abstract}
Erd\H{o}s proved that $\mathcal{F}(A) := \sum_{a \in A}\frac{1}{a\log a}$ converges for any primitive set of integers $A$ and later conjectured this sum is maximized when $A$ is the set of primes. Banks and Martin further conjectured that  $\mathcal{F}(\mathcal{P}_1) > \ldots > \mathcal{F}(\mathcal{P}_k) > \mathcal{F}(\mathcal{P}_{k+1}) > \ldots$, where $\mathcal{P}_j$ is the set of integers with $j$ prime factors counting multiplicity, though this was recently disproven by Lichtman.  We consider the corresponding problems over the function field $\mathbb{F}_q[x]$, investigating the sum $\mathcal{F}(A) := \sum_{f \in A} \frac{1}{\textnormal{deg} f \cdot q^{\textnormal{deg} f}}$. We establish a uniform bound for $\mathcal{F}(A)$ over all primitive sets of polynomials $A \subset \mathbb{F}_q[x]$ and conjecture that it is maximized by the set of monic irreducible polynomials. We find that the analogue of the Banks-Martin conjecture is false for $q = 2$, $3$, and $4$, but we find computational evidence that it holds for $q > 4$.   
\end{abstract}
\section{Introduction}
A primitive set is one in which no element of the set divides another. In 1935, Erd\H{o}s \cite{erdos35} proved that for any primitive set of positive integers $A\neq\{1\}$,
$$\f(A) \ \coloneqq \ \sum_{a \in A} \frac{1}{a\log a} \ < \ \infty.$$
In 1988, Erd\H{o}s conjectured that the primitive set which maximizes this sum is the set of primes. 
\begin{conjecture}[Erd\H{o}s]
Let $\P$ denote the set of prime numbers. For all primitive sets of positive integers $A \neq \{1\}$, 
\begin{equation}\sum_{a \in A} \frac{1}{a \log a} \ \leq \ \sum_{p \in \P} \frac{1}{p\log p}. \label{eq:erdsum} \end{equation}
\end{conjecture}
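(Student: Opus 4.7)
The conjecture is a well-known open problem, so rather than claim a resolution I will lay out the most natural attack, based on the ``smallest prime factor'' partition, and indicate precisely where it has historically resisted completion.

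The plan is to partition $A\setminus\{1\}$ according to the smallest prime divisor: for each prime $p$ set
\[
A_p \;=\; \{\, a\in A : p \text{ is the smallest prime factor of } a\,\}.
\]
The $A_p$ are pairwise disjoint and cover $A\setminus\{1\}$, so it suffices to establish the \emph{local Erd\H{o}s criterion}
\[
\sum_{a\in A_p}\frac{1}{a\log a} \;\le\; \frac{1}{p\log p}
\]
for every prime $p$ and then sum over $p$. If $p\in A$, primitivity forces $A_p=\{p\}$ and the criterion holds with equality, so the nontrivial case is $p\notin A$.

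Next I would write each $a\in A_p$ as $a=pm$, where $m$ ranges over a set $M_p$ of integers all of whose prime factors are at least $p$, and the primitivity of $A$ descends to mutual non-divisibility inside each $M_p$. The task becomes
\[
\sum_{m\in M_p}\frac{1}{pm\log(pm)} \;\le\; \frac{1}{p\log p},
\]
and, after the elementary bound $\log(pm)\ge \log p+\log m$ and multiplication by $p\log p$, reduces to showing $\sum_{m\in M_p}\frac{\log p}{m(\log p+\log m)}\le 1$. Combined with Mertens-type estimates for sums of $1/m$ over integers whose prime factors exceed $p$, this is the bookkeeping that Zhang used to verify the conjecture when every prime factor of every $a\in A$ is bounded by a fixed constant, and that Lichtman refined to establish the uniform bound $\f(A)<e^{\gamma}$.

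The main obstacle --- and where I expect the approach to stall, as it does throughout the literature --- is the case of small $p$ (especially $p=2$) when the elements of $M_p$ are supported on many small primes only slightly larger than $p$. There the error terms in the Mertens approximations to $\sum_{m\in M_p}1/m$ are of the same order as the main term, and primitivity of $A$ does not appear to impose any restriction on $M_p$ beyond mutual non-divisibility, which is too weak to produce the sharp inequality. Closing this gap would seemingly require either a genuinely new structural consequence of primitivity for such sets $M_p$ (sharper than what Besicovitch-type constructions rule out), or a global averaging argument across $p$ that exploits cancellation between different $A_p$ rather than proving the local criterion prime-by-prime.
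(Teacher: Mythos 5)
This statement is the Erd\H{o}s primitive set conjecture itself, which the paper records as an open problem and does not prove; there is therefore no proof in the paper to compare against, and you are right not to claim one. Your outline of the standard attack --- partitioning $A$ by smallest prime factor, reducing to the local criterion $\sum_{a\in A_p}\frac{1}{a\log a}\le\frac{1}{p\log p}$, and handling $p\in A$ versus $p\notin A$ separately --- is an accurate description of the framework used by Erd\H{o}s, Zhang, and Lichtman--Pomerance, and it is essentially the same decomposition the present paper adapts to $\F_q[x]$ (the sets $A'_p$ and the density function $g$ in Section 4) to obtain a uniform upper bound rather than the sharp conjectured one. Your diagnosis of where the local criterion fails to close (small $p$, elements supported on primes only slightly larger than $p$, Mertens error terms comparable to the main term) matches the known state of the art. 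One small correction: Zhang's 1991 theorem verifies the conjecture for primitive sets in which no element has more than four prime factors counted with multiplicity, not for sets whose elements have all prime factors bounded by a fixed constant; the latter condition belongs to the Lichtman--Pomerance analysis of which primes $p$ are ``Erd\H{o}s strong,'' i.e.\ satisfy the local criterion unconditionally.
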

\noindent While this conjecture remains open, significant progress has been made. In 1991, Zhang \cite{zhang91} showed that the conjecture holds for all primitive sets containing no element with more than four prime factors counted with multiplicity. Two years later, Erd\H{o}s and Zhang \cite{erdzhang94} showed that $\f(A)<1.84$ for any primitive set; this was improved last year by Lichtman and Pomerance \cite{lichtmanPom19} to $\f(A)  \ \leq \  e^\gamma = 1.781072\ldots$.  For comparison, we know due to Cohen \cite{cohen} that $\f(\P) = 1.636616\ldots$.

In 2013, Banks and Martin \cite{banksMartin13} proposed a related conjecture concerning the Erd\H{o}s sum of primitive sets with a fixed number of prime factors. 
\begin{conjecture}\label{banks-martin}\textnormal{(Banks, Martin)}
Let $\P_k$ be the set of natural numbers with exactly $k$ prime factors counted with multiplicity and let $\f$ be the Erd\H{o}s sum in the integers. Then 
$$\f(\P_1) \ > \  \f(\P_2) \ > \  \ldots \ > \  \f(\P_k) \ > \  \f(\P_{k+1})\ldots.$$
\end{conjecture}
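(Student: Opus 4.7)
The plan is a term-by-term comparison between $\f(\P_{k+1})$ and $\f(\P_k)$ via the map $n \mapsto n/P^-(n)$, where $P^-(n)$ denotes the least prime factor. Every $n \in \P_{k+1}$ factors uniquely as $n = p \cdot m$ with $m \in \P_k$ and $P^-(m) \geq p$, so interchanging the order of summation gives
$$\f(\P_{k+1}) \;=\; \sum_{m \in \P_k} \frac{1}{m}\sum_{\substack{p \text{ prime}\\ p \leq P^-(m)}} \frac{1}{p\,\log(pm)}.$$
It would therefore suffice to establish the \emph{local inequality}
$$\sum_{p \leq P^-(m)} \frac{1}{p\log(pm)} \;<\; \frac{1}{\log m} \qquad \text{for every } m \in \P_k,$$
after which multiplying by $1/m$ and summing over $m \in \P_k$ yields the desired $\f(\P_{k+1}) < \f(\P_k)$.

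To attack the local inequality, I would expand $\tfrac{1}{\log(pm)} = \tfrac{1}{\log m}\bigl(1 - \tfrac{\log p}{\log m} + O\bigl(\tfrac{\log^2 p}{\log^2 m}\bigr)\bigr)$ and insert Mertens's estimates $\sum_{p \leq y} \tfrac{1}{p} = \log\log y + M + o(1)$ and $\sum_{p \leq y} \tfrac{\log p}{p} = \log y + O(1)$ at $y = P^-(m)$, where $M \approx 0.2615$ is the Meissel--Mertens constant. After multiplying through by $\log m$, the local inequality reduces, to leading order, to
$$\log\log P^-(m) \;-\; \frac{\log P^-(m)}{\log m} \;<\; 1 - M + o(1),$$
which one would then hope to verify case-by-case on $P^-(m)$, exploiting the smallness of $\log P^-(m)/\log m$ when $m$ has many repeated prime factors.

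The main obstacle -- and ultimately the reason this strategy cannot succeed in full -- is uniformity in $k$. In the extremal case $m = p^k$ one has $P^-(m) = p$ and $\log P^-(m)/\log m = 1/k$, so for large $k$ the local inequality degenerates to $\log\log p < 1 - M \approx 0.74$, i.e.\ $p < e^{e^{0.74}} \approx 8.2$. For any larger prime $p$ the local inequality fails outright, and for $k$ growing the cumulative contribution of such prime-power $m$ becomes comparable in size to the intended gap $\f(\P_k) - \f(\P_{k+1})$. This is essentially the mechanism behind Lichtman's disproof, and it signals that any viable refinement must re-pair each ``bad'' $n \in \P_{k+1}$ with a different element of $\P_k$ -- for instance by replacing $P^-(n)$ with a more carefully chosen prime factor -- and making this re-pairing work uniformly in $k$ would be the true technical heart of any near-miss of the conjecture.
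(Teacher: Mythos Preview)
The statement you are addressing is a \emph{conjecture}, not a theorem; the paper does not prove it and in fact explicitly notes (immediately after stating it) that Lichtman has disproven it: the sequence $\f(\P_k)$ attains a global minimum at $k=6$, so the chain of inequalities fails. There is therefore no ``paper's own proof'' to compare your proposal against.

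Your write-up is not really a proof attempt but rather a sketch of a natural local strategy together with an explanation of why it breaks down --- and you correctly identify that the obstruction (failure of the local inequality for $m=p^k$ with $p$ moderately large) is precisely the phenomenon behind Lichtman's counterexample. That diagnosis is accurate. But since the conjecture is false, no amount of ``re-pairing'' bad elements can rescue the argument for all $k$; the most one could hope for is to establish the inequality for small $k$ (as Zhang and Bayless--Kinlaw--Klyve did for $k\le 3$), and even there the method you outline would need substantial sharpening beyond the first-order Mertens expansion you wrote down.
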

\noindent Taken together with a theorem of Zhang \cite{zhang91}, results of Bayless, Kinlaw, and Klyve \cite{bkk19} show that $\f(\P_1) > \f(\P_2) > \f(\P_3)$. Just this year, however, Lichtman \cite{lichtman} showed that the conjectured inequality fails to hold for all $k$, and that $\f(\P_k)$ in fact attains a global minimum at $k = 6$.

In this paper, we examine analogues of these conjectures for the function field $\F_q[x]$. Here, the natural parallel of the Erd\H{o}s sum \eqref{eq:erdsum} is
\[ \f(A) \ \coloneqq \ \sum_{a \in A} \frac{1}{q^{\deg a} \deg a},\]
which we conjectured in \cite{us19} is maximized by the set $\I_q \subset \F_q[x]$ of monic irreducible polynomials.

In Section 2, we estimate $\f(\I_q)$ and show that it approaches $\frac{\pi^2}{6} = 1.644930\ldots$ as $q \to \infty$. We then establish effective bounds for the function field analogue of Mertens' third theorem in Section 3, which we use to compute an upper bound for $\f(A)$ over all primitive sets $A\subset\F_q[x]$ in Section 4. When $3 \leq q \leq 19$, we obtain a bound of $e^\gamma$ just as in the integer case, and when $q > 19$, we obtain a bound of $e^{\gamma - 1} + \frac{\pi^2 - 3}{6} = 1.800153\ldots$.  In the case where $q = 2$, we show that $\f(A) <  1+ \frac{e^\ga}{2} = 1.890536\ldots$.

In Sections 5 and 6, we consider the function field analogue of the Banks-Martin conjecture. Letting $\mathcal{I}_{k,q}$ be the set of monic polynomials in $\F_q[x]$ with $k$ irreducible factors, we demonstrate that the infinite chain of inequalities
$$\f(\mathcal{I}_{1,q}) \ > \  \f(\mathcal{I}_{2,q}) \ > \  \ldots \ > \  \f(\mathcal{I}_{k,q}) \ > \  \f(\mathcal{I}_{k+1,q})\ldots$$
fails to hold when $q = 2$, $3$, or $4$. However, we show that for each $k$, there exists a $q_k$ such that
\[\f(\mathcal{I}_{1,q}) \ > \  \f(\mathcal{I}_{2,q}) \ > \  \ldots \ > \  \f(\mathcal{I}_{k,q})\]
for all $q \geq q_k$, and furthermore, that $q_k = O(k^24^k)$. We also present an approach to efficiently compute $\f(\mathcal{I}_{k,q})$ with high precision, providing numerical evidence that the Banks-Martin conjecture in $\F_q[x]$ may hold in full generality when $q \geq 5$.

For the remainder of this paper, we denote the degree of a polynomial $f \in \F_q[x]$ by $\deg f$ and write $||f|| = q^{\deg f}$ for the norm of $f$. Following the conventions we established in \cite{us19}, we restrict our attention to primitive subsets of monic polynomials and exclude the set $\{1\}$ from consideration. 

\section{Counting Irreducibles in $\F_q[x]$}
We begin by evaluating the Erd\H{o}s sum over the monic irreducibles $\I_q \subset \F_q[x]$. Letting $\pi'_q(n)$  denote the number of degree $n$ irreducibles in $\F_q[x]$, we rewrite our sum as
\[\f(\I_q) \ = \ \sum_{n = 1}^{\infty} \frac{\pi'_q(n)}{nq^n}. \]
The numerators of this sum can be expressed in terms of the M\"{o}bius function $\mu$ using Gauss' formula 
\[\pi'_q(n) \ = \ \frac{1}{n} \sum_{d|n} q^d \mu\left(\frac{n}{d} \right),\]
which allows us to obtain bounds on $\pi'_q(n)$ and $\f(\I_q)$. 

 \begin{proposition}\label{lower bound of pi'} 
            \[\frac{q^n}{n} - \Big(\frac{q}{q-1}\Big)\frac{q^{n/2}}{n} \ \leq \ \pi'_q(n) \ \leq \ \frac{q^n}{n}.\]
            \end{proposition}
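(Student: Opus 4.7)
My plan is to derive both bounds from the degree identity
\[q^n \;=\; \sum_{d \mid n} d\,\pi'_q(d),\]
which comes from the well-known factorization of $x^{q^n} - x$ over $\F_q[x]$ as the product of all monic irreducibles of degree dividing $n$; this is equivalent via M\"obius inversion to the Gauss formula quoted in the excerpt. The upper bound falls out immediately: every summand on the right-hand side is a nonnegative integer, so the single term $n\,\pi'_q(n)$ is at most $q^n$, giving $\pi'_q(n) \leq q^n/n$.

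For the lower bound I would transpose the identity to isolate the $d = n$ summand,
\[n\,\pi'_q(n) \;=\; q^n \,-\, \sum_{\substack{d \mid n \\ d < n}} d\,\pi'_q(d),\]
and then substitute the upper bound $d\,\pi'_q(d) \leq q^d$ just established into each remaining term. Since every proper divisor of $n$ is at most $n/2$, the remaining sum is dominated by the geometric sum
\[\sum_{d=1}^{\lfloor n/2 \rfloor} q^d \;\leq\; \frac{q\,(q^{n/2}-1)}{q-1} \;\leq\; \frac{q}{q-1}\,q^{n/2}.\]
Dividing through by $n$ produces the claimed lower bound.

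There is no real obstacle here: both inequalities fall out of the degree identity almost mechanically, and the only minor technicality is bounding the geometric tail cleanly enough to produce exactly the constant $\frac{q}{q-1}$. One could alternatively work straight from the Gauss formula $n\,\pi'_q(n) = \sum_{d \mid n} q^d \mu(n/d)$ and apply $|\mu| \leq 1$ termwise; this recovers the same lower bound, but its upper bound is off by a factor of roughly $2$, so routing through the degree identity is preferable.
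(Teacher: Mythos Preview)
Your argument is correct. Both bounds follow cleanly from the degree identity, and the geometric tail estimate gives exactly the constant $\tfrac{q}{q-1}$ as stated.

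The paper takes a slightly different route: it works directly from Gauss' formula $n\,\pi'_q(n) = \sum_{d\mid n} q^d\mu(n/d)$, isolates the two top terms $q^n$ and $-q^{n/p'}$ (where $p'$ is the least prime factor of $n$), and applies $|\mu|\leq 1$ to the remaining terms indexed by $d < n/p'$. This yields the same geometric sum bounded by $\tfrac{q}{q-1}q^{n/2}$. Your approach via the degree identity is a bit cleaner: because every term $d\,\pi'_q(d)$ is nonnegative, the upper bound is immediate, and the lower bound then comes from feeding that upper bound back in---no handling of the M\"obius function is needed. The two arguments converge on the same tail estimate, so the difference is one of packaging rather than substance; your version has the advantage of making the upper bound a one-liner, which, as you note, does not fall out as directly from Gauss' formula with $|\mu|\leq 1$.
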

            
            \begin{proof}
            The upper bound is a known result whose proof can be found in \cite{MR2712407}. The lower bound is immediate when $n = 1$, so we will consider the case where $n > 1$. We know from Gauss' formula that
            $$\frac{q^n}{n} - \pi'_q(n) \ = \ \frac{q^n}{n} - \frac{1}{n} \sum_{d|n} q^d \mu \left( \frac{n}{d} \right) \ = \   \frac{q^{n/p'}}{n} - \frac{1}{n} \sum_{\substack{d|n,\\ d < n/p'}} q^d \mu\left(\frac{n}{d}\right),$$
            where $p'$ is the smallest prime factor of $n$. This expression is at most
            \[\frac{q^{n/p'}}{n} + \frac{1}{n} \sum_{d=1}^{n/p' - 1} q^d \ = \  \frac{q^{n/p'}}{n} + \frac{1}{n}\left( \frac{q^{n/p'} - q}{q-1} \right) \ \leq \  \frac{q^{n/2}}{n} + \frac{1}{n}\left( \frac{q^{n/2} - q}{q-1}\right) \ = \  \Big(\frac{q}{q-1}\Big) \Big(\frac{q^{n/2}-1}{n}\Big), \] 
            which gives that
            \[\pi'_q(n) \  \geq \ \frac{q^n}{n} - \Big(\frac{q}{q-1}\Big) \frac{q^{n/2}}{n}. \qedhere\]
        \end{proof}
        
        \begin{proposition}\label{prop.recipsum}
\[\frac{\pi^2}{6} - \frac{q}{q-1} \textnormal{Li}_2 \bigg( \frac{1}{\sqrt{q}} \bigg)
\ \leq \ \f(\mathcal{I}_q) \ 
\leq \ \frac{\pi^2}{6},\]
where $\textnormal{Li}_2(x)$ is the dilogarithm $\textnormal{Li}_2(x)=\sum_{k = 1}^{\infty} \frac{x^k}{k^2}$. In particular, $\f(\I_q) \to \frac{\pi^2}{6}$ as $q \to \infty$. 
\end{proposition}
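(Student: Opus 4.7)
The plan is to apply the pointwise bounds on $\pi_q'(n)$ from Proposition~\ref{lower bound of pi'} term by term to the series $\f(\I_q) = \sum_{n=1}^\infty \frac{\pi_q'(n)}{n q^n}$, producing the dilogarithm expression directly.

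For the upper bound, I would substitute $\pi_q'(n) \leq q^n/n$ into each term to get
\[ \f(\I_q) \ \leq \ \sum_{n=1}^\infty \frac{1}{n^2} \ = \ \frac{\pi^2}{6}. \]
For the lower bound, I would substitute the lower inequality from Proposition~\ref{lower bound of pi'}, giving
\[ \f(\I_q) \ \geq \ \sum_{n=1}^\infty \frac{1}{n q^n} \left( \frac{q^n}{n} - \frac{q}{q-1} \cdot \frac{q^{n/2}}{n} \right) \ = \ \frac{\pi^2}{6} - \frac{q}{q-1} \sum_{n=1}^\infty \frac{1}{n^2 q^{n/2}}. \]
Recognizing $\sum_{n=1}^\infty \frac{(1/\sqrt{q})^n}{n^2} = \textnormal{Li}_2(1/\sqrt{q})$ yields the stated lower bound.

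For the limiting statement, I would sandwich $\f(\I_q)$ between these two bounds and let $q \to \infty$. Since $\textnormal{Li}_2(1/\sqrt{q}) = \sum_{k=1}^\infty \frac{1}{k^2 q^{k/2}} \leq \frac{1}{\sqrt{q}} \cdot \frac{\pi^2}{6}$ (or even more crudely, $\textnormal{Li}_2(1/\sqrt{q}) \to 0$ by monotone convergence as each summand vanishes), and the factor $q/(q-1) \to 1$, the lower bound tends to $\pi^2/6$, forcing $\f(\I_q) \to \pi^2/6$.

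There is no real obstacle here; the entire argument is a direct application of Proposition~\ref{lower bound of pi'} together with the identification of the resulting power series as a dilogarithm. The only minor care needed is swapping summation order / keeping the index correct so that $q^{n/2}/q^n = q^{-n/2}$ matches the argument $1/\sqrt{q}$ of $\textnormal{Li}_2$, but this is routine.
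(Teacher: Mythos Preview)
Your proposal is correct and follows exactly the same approach as the paper: apply the bounds of Proposition~\ref{lower bound of pi'} term by term, sum to get $\pi^2/6$ and the dilogarithm, and observe $\textnormal{Li}_2(1/\sqrt{q}) \to 0$ for the limit.
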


\begin{proof}
These bounds are a consequence of Proposition \ref{lower bound of pi'}. For the upper bound, we have
\[\sum_{n = 1}^{\infty} \frac{\pi'_q(n)}{nq^n} \ \leq \ \sum_{n = 1}^{\infty} \frac{1}{n^2} \ = \ \frac{\pi^2}{6}, \]
and for the lower bound, we have
\[\sum_{n = 1}^{\infty} \frac{\pi'_q(n)}{nq^n} \ \geq \ \sum_{n = 1}^{\infty} \frac{1}{n^2} -\left(\frac{q}{q-1}\right) \frac{1}{n^2q^{n/2}} \ = \ \frac{\pi^2}{6} - \frac{q}{q-1} \textnormal{Li}_2 \bigg( \frac{1}{\sqrt{q}} \bigg).\]
As $q \to \infty$, $\textnormal{Li}_2\left(\frac{1}{\sqrt{q}}\right) \to 0$, so the lower bound for $\f(\I_q)$ converges to the upper bound. 
\end{proof}

In the following proposition, we show that the value of $\f(\I_q)$ increases monotonically with $q$, which implies that $\f(\I_2)$ is a lower bound on $\f(\I_q)$ for any $q$. The lower bound obtained by computing this sum (see Section 2.1) is strictly better than
the lower bound in Proposition \ref{prop.recipsum} for all $q < 37$.
\ignore{
    \subsection{Applying this to the irreducibles}
    If we applying Prop \ref{lower bound of pi'} to the Erd\H{o}s sum over the the irreducible polynomials, we can get a fairly small range within which this sum must fall, depending on the value of $q$. 
    \begin{proposition}
For the set $\mathcal{I}_q$ of monic irreducibles in $\F_q[x]$ and $\ga$ the Euler-Mascheroni constant, 
\[\log (N + 1) - 2\log \Big( \frac{\sqrt{q}}{\sqrt{q}-1} \Big)
<\!\!\! \sum_{\substack{p \in \I \\ \deg p \leq N}} \frac{1}{||p||} 
 \ < \  \log (N + 1) + \ga.\]
\end{proposition}
\begin{proof}
Similarly to in \textbf{Proposition 1}, we can group together the polynomials of the same degree to get 
\[\sum_{\substack{p \in \I \\ \deg p \leq N}} \frac{1}{||p||} \ = \  \sum_{n = 1}^{N} \frac{\pi'_q(n)}{q^n}, \]
where $\pi'_q(n)$ denotes the number of monic irreducibles of degree exactly $n$ in $\F_q[x]$. We know from \cite{MR2712407} that 
\[\frac{q^n}{n} - \frac{2q^{\nicefrac{n}{2}}}{n} \ \leq \  \pi'_q(n)  \ \leq \  \frac{q^n}{n},\]

from which we obtain the upper bound 

\[\sum_{n = 1}^{N} \frac{\pi'_q(n)}{q^n}
 \ \leq \  \sum_{n = 1}^{N} \frac{1}{n}
 \ < \  \log (N + 1) + \ga. \]

The lower bound for $\pi'_q(n)$ gives us 

\[\sum_{n = 1}^N \frac{\pi'_q(n)}{q^n} 
 \ \geq \  \sum_{n = 1}^N \Big( \frac{1}{n} - \frac{2}{n\sqrt{q}^n} \Big)
\ = \  \sum_{n = 1}^N \frac{1}{n} - \sum_{n = 1}^N \frac{2}{n\sqrt{q}^n}
\ > \  \sum_{n = 1}^N \frac{1}{n} - \sum_{n = 1}^\infty \frac{2}{n\sqrt{q}^n}
\ = \  \sum_{n = 1}^N \frac{1}{n} - 2\log \Big( \frac{\sqrt{q}}{\sqrt{q}-1} \Big)
.\]
Using the lower bound of $\log (N + 1)$ for $\sum\limits_{n = 1}^N \frac{1}{n}$ results in the lower bound we need.
\end{proof}

\begin{proposition}
For the set $\mathcal{I}_q$ of monic irreducibles in $\F_q[x]$, 
\[\log \Big( \frac{q - \nicefrac{2}{\!\!\sqrt{q}}}{q-1} \Big)
 \ < \  \log \Big( \frac{q^3 - 2q^{3/2} + 1}{q^3-q^2} \Big) 
 \ \leq \  \sum_{p \in \mathcal{I}_q} \frac{1}{||p||^2}
 \ \leq \  \log \Big( \frac{q}{q-1} \Big).\]
\end{proposition}

\begin{proof}
As in the previous proof, we group together polynomials of the same degree to get 
\[\sum_{p \in \mathcal{I}_q} \frac{1}{||p||^2} \ = \  \sum_{n = 1}^{\infty} \frac{\pi'_q(n)}{q^{2n}}, \]
where $\pi'_q(n)$ denotes the number of monic irreducibles of degree exactly $n$ in $\F_q[x]$. We know from [Braat] that 
\[\frac{q^n}{n} - \frac{2q^{\nicefrac{n}{2}}}{n} \ \leq \  \pi'_q(n)  \ \leq \  \frac{q^n}{n},\]

from which we obtain the upper bound 

\[\sum_{n = 1}^{\infty} \frac{\pi'_q(n)}{q^{2n}}
 \ \leq \  \sum_{n = 1}^{\infty} \frac{1}{nq^n}
\ = \  \log \Big( \frac{q}{q-1} \Big). \]

The lower bound for $\pi'_q(n)$ gives us 

\[\sum_{n = 1}^{\infty} \frac{\pi'_q(n)}{nq^n} 
 \ \geq \  \sum_{n = 1}^{\infty} \Big( \frac{1}{nq^n} - \frac{2}{nq^{3\nicefrac{n}{2}}} \Big)
\ = \  \sum_{n = 1}^{\infty} \frac{1}{nq^n} - \sum_{n = 1}^{\infty} \frac{2}{nq^{3\nicefrac{n}{2}}}
\ = \  \log \Big( \frac{q}{q-1} \Big) - 2 \log \Big( \frac{q^{3/2}}{q^{3/2}-1} \Big) .\]

This expression is equal to the larger lower bound and greater than the smaller lower bound.
\end{proof}
}

\begin{proposition}\label{prop.hello} For any prime powers $q _1 < q_2$, $\mathcal{F}(\mathcal{I}_{q_1})<\mathcal{F}(\mathcal{I}_{q_2})$.
\end{proposition}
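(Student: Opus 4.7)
The plan is to obtain a closed form for $\mathcal{F}(\mathcal{I}_q)$ in terms of the dilogarithm, so that the difference $\mathcal{F}(\mathcal{I}_{q_2}) - \mathcal{F}(\mathcal{I}_{q_1})$ becomes an alternating series, and then prove positivity by bounding the leading positive term from below and the negative tail from above. Substituting Gauss's formula $\pi'_q(n) = \frac{1}{n}\sum_{d \mid n}\mu(n/d)\,q^d$ into $\mathcal{F}(\mathcal{I}_q) = \sum_{n \geq 1}\pi'_q(n)/(nq^n)$ and reindexing with $e = n/d$ yields
\[
\mathcal{F}(\mathcal{I}_q) \;=\; \sum_{e \geq 1}\frac{\mu(e)}{e^2}\,\textnormal{Li}_2(q^{1-e}) \;=\; \frac{\pi^2}{6} + \sum_{e \geq 2}\frac{\mu(e)}{e^2}\,\textnormal{Li}_2(q^{1-e}).
\]
Setting $\Delta_e := \textnormal{Li}_2(q_1^{1-e}) - \textnormal{Li}_2(q_2^{1-e}) > 0$, this gives
\[
\mathcal{F}(\mathcal{I}_{q_2}) - \mathcal{F}(\mathcal{I}_{q_1}) \;=\; \sum_{\substack{e \geq 2 \\ e \text{ squarefree}}}\frac{-\mu(e)\,\Delta_e}{e^2},
\]
in which $-\mu(e) = +1$ for primes $e$ while the first negative contributions come from $e \in \{6, 10, 14, 15, \ldots\}$ (squarefree $e$ with $\omega(e) = 2$).

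The strategy is to show the $e = 2$ contribution alone already exceeds an upper bound on the absolute value of the negative tail. For the main term, $\textnormal{Li}_2'(t) \geq 1$ on $(0, 1)$ gives $\Delta_2 \geq 1/q_1 - 1/q_2 \geq 1/(q_1(q_1+1)) \geq 1/(2q_1^2)$, so the $e = 2$ term is at least $1/(8q_1^2)$. For the tail, the crude bound $\textnormal{Li}_2(x) < x/(1-x) \leq 2x$ for $x \leq 1/2$ (applicable since $q_1 \geq 2$ and $e \geq 2$ imply $q_1^{1-e} \leq 1/2$) yields $\Delta_e < 2 q_1^{1-e}$; summing $\Delta_e/e^2$ over all $e \geq 6$ and using $1/e^2 \leq 1/36$ on this range bounds the absolute negative contribution by $q_1^{-4}/(18(q_1-1))$.

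Positivity of the difference then reduces to the inequality $1/(8q_1^2) > 1/(18q_1^4(q_1-1))$, equivalently $9 q_1^2(q_1-1) > 4$, which holds for every integer $q_1 \geq 2$. The main subtlety is making the bounds uniform in $q_2$: this is accomplished by discarding every positive term beyond $e = 2$ in the lower bound and invoking only $q_2 \geq q_1 + 1$ when estimating $\Delta_2$, so that the final comparison involves $q_1$ alone.
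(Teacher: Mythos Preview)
Your proof is correct and takes a genuinely different route from the paper's. The paper argues term by term: it treats $q$ as a real variable, differentiates $\pi'_q(n)/(nq^n)$ with respect to $q$, and shows the derivative is positive for all $n$ once $q \geq 3$; the remaining case $q_1=2$, $q_2=3$ is then checked numerically. Your approach instead sums Gauss's formula into the closed form $\mathcal{F}(\mathcal{I}_q)=\sum_{e\geq 1}\mu(e)e^{-2}\,\textnormal{Li}_2(q^{1-e})$ and compares $q_1$ to $q_2$ directly, isolating the dominant $e=2$ contribution and crudely bounding everything from $e\geq 6$ onward. What you gain is a single uniform argument covering all $q_1\geq 2$ with no computational verification and no appeal to a continuous interpolation in $q$; what you lose is the paper's finer byproduct that (for $q\geq 3$) every individual summand $\pi'_q(n)/(nq^n)$ is increasing in $q$, which your global estimate does not recover.
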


\begin{proof}
The inequality can be verified computationally for $q_1 = 2$ and $q_2 = 3$. To address the remaining cases, we will show that each term $\frac{\pi_q'(n)}{nq^n}$ of $\f(\I_q)$ is strictly increasing in $q$ when $q \geq 3$. By Gauss' formula, this is equivalent to showing that 
\[\frac{1}{q^n}\sum_{d|n} q^d \mu \left( \frac{n}{d} \right)\] 
is increasing in $q$. The derivative of this expression with respect to $q$ is
\[\frac{q^n\sum_{d|n} dq^{d-1} \mu \big( \frac{n}{d} \big) - n q^{n-1} \sum_{d|n} q^d \mu \big( \frac{n}{d} \big) }{q^{2n}}
\ = \ \frac{\sum_{d|n} (d-n)q^{d} \mu \big( \frac{n}{d} \big)}{q^{n+1}}.\]
To show it is positive, we first note that $\sum_{d|n} (d-n) q^{d}\mu \big( \frac{n}{d} \big)$ is a polynomial in $q$ whose leading nonzero term is $(n - \frac{n}{p'})q^{n/p'}$, where $p'$ is the smallest prime factor of $n$. Since $\mu(m) \leq 1$ for all $m$, this polynomial can be bounded below by
\[\left(n -\frac{n}{p'}\right) q^{n/p'} + \sum_{d = 1}^{n/p' - 1}(d - n)q^{d}.\] 
This expression in turn is at least
\begin{align*}\left(n -\frac{n}{p'}\right) q^{n/p'} -\sum_{d = 1}^{n/p' - 1}nq^{d} 
\ &= \ \left(n -\frac{n}{p'}\right) q^{n/p'} - n\left( \frac{q^{n/p'}-1}{q-1}\right) + n \\
&= \ n q^{n/p'} \Big( \frac{p' - 1}{p'} - \frac{1}{q-1} \Big) + \frac{n}{q-1} + n \ \geq \ \frac{n}{q-1} + n,
\end{align*}
where the last inequality holds because $\frac{p' - 1}{p'} - \frac{1}{q-1}$ is nonnegative for all $p' \geq 2$ and $q \geq 3$. It follows that the derivative is positive, which means that $\f(\I_q)$ is strictly increasing for $q \geq 3$. 
\end{proof}

\subsection{Numerical note}  Even though a closed formula for $\f(\I_q)$ seems elusive, it is surprisingly easy to compute its value to very high precision for any fixed value of $q$.  Suppose we have computed a partial sum of $\f(\I_q)$, 
\[S_{N,q} \ \coloneqq \ \sum_{n=1}^N \frac{\pi'_q(n)}{nq^n}.\] 
We estimate the remainder of this sum as 
\begin{align}
    \f(\I_q) - S_{N,q} \ &= \ \sum_{n=N+1}^\infty \frac{\pi'_q(n)}{nq^n} 
    \ = \ \sum_{n=N+1}^\infty \left( \frac{1}{n^2} - \frac{\frac{q^n}{n} - \pi'_q(n)}{nq^n}\right) \nonumber \\
    &= \ \zeta(2) - \sum_{n=1}^n \frac{1}{n^2} - \sum_{n=N+1}^\infty \left(\frac{\frac{q^n}{n} - \pi'_q(n)}{nq^n}\right) \label{eq:irredsumremainder}.
\end{align}
From Proposition \ref{lower bound of pi'}, we have \[0 \ \leq \ \frac{q^n}{n} - \pi'_q(n) \ \leq \ \left( \frac{q}{q-1}\right)\frac{q^{n/2}}{n},\]
and so 
\begin{align*}
    0 \ \leq \ \sum_{n=N+1}^\infty \left(\frac{\frac{q^n}{n} - \pi'_q(n)}{nq^n}\right) \ &\leq \ \frac{q}{q-1}\sum_{n=N+1}^\infty \frac{1}{n^2q^{n/2}} \ < \ \frac{q}{N^2(q-1)}\left(\frac{q^{-(N+1)/2}}{1-q^{-1/2}}\right)  \ < \ \frac{5 q^{-N/2}}{N^2}
\end{align*}
for $q\geq 2$.  Using this in \eqref{eq:irredsumremainder} gives the bounds 
\[S_{N,q} + \zeta(2) - \sum_{n=1}^N \frac{1}{n^2} - \frac{5 q^{-N/2}}{N^2} \ \leq \ \f(\I_q) \ \leq \ S_{N,q} + \zeta(2) - \sum_{n=1}^N \frac{1}{n^2}.\]

When $q=2$, taking $N=70,000$ is sufficient to compute the value of $\f(\mathcal{I}_2) = 1.4676602238442289268\ldots$ to over 10,000 digits accuracy in a few seconds, and this converges even faster for larger values of $q$.

\section{Bounds for the Mertens Product}
In \cite{us19}, as part of our proof that the Erd\H{o}s sum converges for all primitive sets, we used the Sieve of Erastosthenes to show that the density of multiples of $f$ with no irreducible factors of smaller degree is 
\[\frac{1}{||f||} \ \prod_{\substack{p\in \I_q \\ \deg p \leq D(f)}} \left(1 - \frac{1}{||p||}\right),\] 
where $D(f)$ denotes the largest degree of an irreducible factor of $f$. We were then able to bound this expression using an analogue of Mertens' third theorem in function fields--a special case of Theorem 3 in \cite{MR1700882}.
\begin{theorem}\label{thm.mertens}
\begin{equation}
    \prod_{\substack{p \in \I_q \\ \deg p \leq n}} \left(1 - \frac{1}{||p||} \right) \ \sim \ \frac{1}{e^{\gamma}n}, \label{eq:mertensprod}
\end{equation}
where $\gamma = 0.577215\ldots$ is the Euler-Mascheroni constant.
\end{theorem}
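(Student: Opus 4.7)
The plan is to take logarithms of the Mertens product, reduce it to the first-order sum $\sum_{\deg p \leq n} 1/||p||$, and then force the remaining constant to equal $\gamma$ by comparing with the Euler product for the function field zeta function.

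First I would expand
\[\log \prod_{\substack{p \in \I_q \\ \deg p \leq n}} \left(1 - \frac{1}{||p||}\right) \ = \ -\sum_{\substack{p \in \I_q \\ \deg p \leq n}} \frac{1}{||p||} \ - \ \sum_{\substack{p \in \I_q \\ \deg p \leq n}} \sum_{k \geq 2} \frac{1}{k||p||^k}.\]
The $k \geq 2$ piece is dominated by $\sum_p 2/||p||^2 < \infty$, so as $n \to \infty$ it converges to some constant $C''_q$. For the $k=1$ piece, I would group by degree and invoke Proposition \ref{lower bound of pi'} to write $\pi'_q(d)/q^d = 1/d + a_d$ with $|a_d| \leq \frac{q}{q-1} \cdot q^{-d/2}/d$. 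Summing yields
\[\sum_{\substack{p \in \I_q \\ \deg p \leq n}} \frac{1}{||p||} \ = \ \sum_{d=1}^n \frac{1}{d} + \sum_{d=1}^n a_d \ = \ \log n + \gamma + C'_q + o(1),\]
where $C'_q := \sum_{d=1}^\infty a_d$ converges absolutely. The theorem therefore reduces to showing $C'_q + C''_q = 0$.

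For this cancellation I would exploit the function field zeta function $\zeta_q(s) = (1-q^{1-s})^{-1} = \prod_p (1-||p||^{-s})^{-1}$. Setting $P(s) := \sum_p ||p||^{-s}$ and $t = q^{1-s}$, so that $||p||^{-s} = t^{\deg p}/q^{\deg p}$, one obtains
\[P(s) \ = \ \sum_{d=1}^\infty \frac{\pi'_q(d)}{q^d}\,t^d \ = \ -\log(1-t) + \sum_{d=1}^\infty a_d\, t^d,\]
and Abel's theorem gives $P(s) + \log(1-t) \to C'_q$ as $s \to 1^+$. On the other hand, taking the logarithm of the Euler product yields $-\log(1-t) = \log \zeta_q(s) = P(s) + \sum_p \sum_{k \geq 2}(k||p||^{ks})^{-1}$, and the double sum tends to $C''_q$ as $s \to 1^+$, so $P(s) + \log(1-t) \to -C''_q$. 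Equating the two limits forces $C'_q = -C''_q$, and exponentiating completes the proof.

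The main obstacle is precisely this final cancellation: the individual values of $C'_q$ and $C''_q$ depend delicately on $q$ and do not admit closed forms, yet their sum must vanish identically. What makes the function field setting dramatically cleaner than the classical Mertens argument is that $\zeta_q(s)$ is simultaneously a rational function of $q^{-s}$ and an Euler product, so the tension between these two expressions manufactures the constant $\gamma$ automatically without requiring any deeper analytic input about $\zeta_q$ (such as a zero-free region or partial summation estimates).
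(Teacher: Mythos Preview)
Your argument is correct. The reduction to showing $C'_q+C''_q=0$ is clean, and the zeta-function trick does force that cancellation: for $s>1$ the identity $-\log(1-t)=\log\zeta_q(s)=P(s)+\sum_p\sum_{k\ge2}(k\,||p||^{ks})^{-1}$ rearranges to $P(s)+\log(1-t)=-\sum_p\sum_{k\ge2}(k\,||p||^{ks})^{-1}$, and both sides have the claimed limits as $s\to1^+$ (the left by absolute convergence of $\sum a_d$, the right by monotone convergence). Exponentiating then gives the asymptotic.

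The paper itself does not prove Theorem~\ref{thm.mertens}; it quotes it from Rosen as a special case of a known result. What the paper \emph{does} prove is the effective refinement in Lemma~\ref{2.0.5 Lemma} and Propositions~\ref{prop.mertensub}--\ref{prop.mertenslb}, and those certainly imply the asymptotic. That route is rather different from yours: instead of passing through $\zeta_q(s)$, the paper expands $\sum_{i\le n}\pi'_q(i)\log(1-q^{-i})$ directly as a series $\sum_{j\ge0}c_j q^{-j}$ and shows, via a M\"obius-function computation, that $c_0=H_n$ while $c_j=0$ for every $1\le j\le\lfloor n/2\rfloor$. This combinatorial cancellation gives an error of size $O(q^{-n/2}H_n)$ and hence explicit two-sided inequalities, not merely an asymptotic. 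Your zeta-function argument is shorter and more conceptual for the bare asymptotic, and it explains transparently \emph{why} $\gamma$ appears (it is inherited from the harmonic series, with the extra constants killed by the exact Euler product); the paper's approach, on the other hand, yields the quantitative control needed later in Section~4, which your argument as written does not.
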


In order to obtain a numerical upper bound for $\f(A)$, we'll need to establish more precise bounds for the Mertens product \eqref{eq:mertensprod}.  If we take the natural logarithm of this product, we obtain
\[ \sum_{i=1}^n \pi'_q(i) \log \Big( 1 - \frac{1}{q^i} \Big) \ = \ - \sum_{i=1}^n \pi'_q(i) \left(\sum_{k=1}^\infty \frac{1}{kq^{ik}} \right).\]
Below we have written out the first six terms of this summation. Notice that the sum of the constant terms from each expression form a partial sum of the harmonic series, and that partial cancellation occurs in the coefficients of other powers of $q$. In particular, the sums for the coefficients of $\frac{1}{q^j}$ are zero for $1\leq j \leq 3$; the terms perfectly cancel out.
\[\begin{array}{ccccccccccccccccc}
\m\pi'_q(1)\log\Big(1-\frac{1}{q^1}\Big) & = & q \sum\limits_{k=1}^\infty \frac{1}{kq^{k}} & = &
1 & \!+\! & \frac{1}{2q} & \!+\! & \frac{1}{3q^2} & \!+\! & \frac{1}{4q^3} & \!+\! & \frac{1}{5q^4} & \!+\! & \frac{1}{6q^5} & \!+\! & \cdots\\
\m\pi'_q(2)\log\Big(1-\frac{1}{q^2}\Big) & = & \frac{q^2-q}{2} \sum\limits_{k=1}^\infty \frac{1}{kq^{2k}} & = &
\frac{1}{2} & \!+\! & \frac{\m1}{2q} & \!+\! & \frac{1}{4q^2} & \!+\! & \frac{\m1}{4q^3} & \!+\! & \frac{1}{6q^4} & \!+\! & \frac{\m1}{6q^5} & \!+\! & \cdots\\
\m\pi'_q(3)\log\Big(1-\frac{1}{q^3}\Big) & = & \frac{q^3-q}{3} \sum\limits_{k=1}^\infty \frac{1}{kq^{3k}} & = &
\frac{1}{3} & \!+\! & 0 & \!+\! & \frac{\m1}{3q^2} & \!+\! & \frac{1}{6q^3} & \!+\! & 0 & \!+\! & \frac{\m1}{6q^5} & \!+\! & \cdots\\
\m\pi'_q(4)\log\Big(1-\frac{1}{q^4}\Big) & = & \frac{q^4-q^2}{4} \sum\limits_{k=1}^\infty \frac{1}{kq^{4k}} & = &
\frac{1}{4} & \!+\! & 0 & \!+\! & \frac{\m1}{4q^2} & \!+\! & 0 & \!+\! & \frac{1}{8q^4} & \!+\! & 0 & \!+\! & \cdots\\
\m\pi'_q(5)\log\Big(1-\frac{1}{q^5}\Big) & = & \frac{q^5-q}{5} \sum\limits_{k=1}^\infty \frac{1}{kq^{5k}} & \!=\! &
\frac{1}{5} & \!+\! & 0 & \!+\! & 0 & \!+\! & 0 & \!+\! & \frac{\m1}{5q^4} & \!+\! & \frac{1}{10q^5} & \!+\! & \cdots\\
\m\pi'_q(6)\log\Big(1-\frac{1}{q^6}\Big) & = & \frac{q^6-q^3-q^2+q}{6} \sum\limits_{k=1}^\infty \frac{1}{kq^{6k}}\! & \!=\! &
\frac{1}{6} & \!+\! & 0 & \!+\! & 0 & \!+\! & \frac{\m1}{6q^3} & \!+\! & \frac{\m1}{6q^4} & \!+\! & \frac{1}{6q^5} & \!+\! & \cdots
\end{array}\]

In the following lemma, we show that for all $n$, this same cancellation occurs for each $j \in \left[1, \left\lfloor\frac{n}{2}\right\rfloor \right]$. By bounding the contribution from terms $\frac{1}{q^j}$ with $j > \frac{n}{2} $, we obtain bounds for $\sum \pi'_q(i) \log \Big( 1 - \frac{1}{q^i} \Big)$ in terms of partial sums of the harmonic series. 

\begin{lemma} \label{2.0.5 Lemma}
\[\left( 1 - \frac{1}{2(q-1)q^{\floor{n/2}}} \right) \sum_{i=1}^n \frac{1}{i} \ \leq \ \left| \sum_{i=1}^n \pi'_q(i) \log \left( 1 - \frac{1}{q^i} \right) \right| \ \leq \ \left( 1 + \frac{1}{2(q-1)q^{\floor{n/2}}} \right) \sum_{i=1}^n \frac{1}{i}.\]
\end{lemma}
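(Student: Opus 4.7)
The plan is to expand each $\log(1 - 1/q^i)$ as a power series in $q^{-i}$ and regroup the resulting double sum by the product $j = ik$, using the classical identity $\sum_{d \mid j} d\,\pi'_q(d) = q^j$ to produce the cancellation foreshadowed in the table preceding the lemma.

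First, I would use $\log(1-x) = -\sum_{k\geq 1} x^k/k$ together with the fact that each term is negative to rewrite
\[\left|\sum_{i=1}^n \pi'_q(i)\log\!\Big(1 - \tfrac{1}{q^i}\Big)\right| \;=\; \sum_{i=1}^n \pi'_q(i)\sum_{k=1}^\infty \frac{1}{k q^{ik}} \;=\; \sum_{j=1}^\infty \frac{1}{j q^j}\sum_{\substack{i \mid j\\ i \leq n}} i\,\pi'_q(i),\]
where the last step reindexes the double sum by $j = ik$. For $j \leq n$ the constraint $i \leq n$ is automatic (every divisor of $j$ is at most $j$), and Gauss's identity $\sum_{d \mid j} d\,\pi'_q(d) = q^j$ (which counts the $q^j$ elements of $\mathbb{F}_{q^j}$ by the degree of their minimal polynomial) collapses the inner sum to $q^j$. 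Hence the $j \leq n$ portion contributes exactly $\sum_{j=1}^n 1/j$, which is the cancellation the table anticipates.

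It then remains to bound the tail $T_n$ coming from $j > n$. Returning to $(i,k)$ indexing gives $T_n = \sum_{i=1}^n \pi'_q(i)\sum_{k \geq L_i} \frac{1}{k q^{ik}}$ where $L_i := \lfloor n/i \rfloor + 1$, and applying the upper bound $\pi'_q(i) \leq q^i/i$ from Proposition \ref{lower bound of pi'} together with a geometric-series estimate of the inner tail yields
\[T_n \;\leq\; \frac{q}{q-1}\sum_{i=1}^n \frac{1}{i\,L_i\, q^{i(L_i-1)}}.\]
The key combinatorial fact is that $i(L_i - 1) = i\lfloor n/i \rfloor \geq \lfloor n/2\rfloor + 1$ for every $1 \leq i \leq n$: when $i > n/2$ the floor equals $1$, so the product is $i \geq \lfloor n/2\rfloor + 1$; when $i \leq n/2$, writing $n = i\lfloor n/i\rfloor + r$ with $0 \leq r < i$ gives $i\lfloor n/i\rfloor \geq n - i + 1 \geq \lfloor n/2\rfloor + 1$. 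Combined with $L_i \geq 2$ for all $i \leq n$, this simplifies to $T_n \leq \frac{1}{2(q-1)q^{\lfloor n/2\rfloor}}\sum_{i=1}^n 1/i$, which is the upper bound of the lemma. The lower bound is then immediate: since $T_n \geq 0$, the absolute value is already at least $\sum_{i=1}^n 1/i$, which exceeds $\bigl(1 - \tfrac{1}{2(q-1)q^{\lfloor n/2\rfloor}}\bigr)\sum_{i=1}^n 1/i$.

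The main obstacle is recognizing the Gauss identity $\sum_{d\mid j} d\,\pi'_q(d) = q^j$, which is what makes the $j \leq n$ portion collapse cleanly to the harmonic partial sum; after that the remaining work is routine geometric-series bookkeeping together with the small combinatorial lemma bounding $i\lfloor n/i\rfloor$.
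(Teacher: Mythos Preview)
Your proof is correct, and it takes a genuinely different route from the paper's. The paper expands each $\pi'_q(i)$ via Gauss's M\"obius formula $\pi'_q(i) = \tfrac{1}{i}\sum_{d\mid i}\mu(i/d)q^d$, obtaining a triple sum which it then reorganizes as a power series $\sum_{j\geq 0} c_j q^{-j}$ in the variable $1/q$; it computes $c_0 = \sum_{d\leq n} 1/d$, shows by a M\"obius--divisor argument that $c_j = 0$ for $1 \leq j \leq \lfloor n/2\rfloor$, and bounds $|c_j| \leq \tfrac{1}{2}\sum_{d\leq n} 1/d$ for larger $j$ before summing the geometric tail. You instead leave $\pi'_q(i)$ intact and group by $j = ik$, invoking the dual identity $\sum_{d\mid j} d\,\pi'_q(d) = q^j$ directly; this collapses the $j\leq n$ range to the harmonic sum in one line and reduces the tail to a short estimate on $i\lfloor n/i\rfloor$. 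Your argument is shorter, avoids the M\"obius machinery, and in fact proves the slightly stronger lower bound $\bigl|\sum_i \pi'_q(i)\log(1-q^{-i})\bigr| \geq \sum_{i\leq n} 1/i$ (with coefficient $1$ rather than $1 - \tfrac{1}{2(q-1)q^{\lfloor n/2\rfloor}}$). The paper's organization, on the other hand, makes the $1/q$--power--series structure hinted at in the preceding table completely explicit, which has some expository value.
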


\begin{proof}
To simplify our calculations, we define 
\[\nu_i(d) = \begin{cases}\mu\left(\frac{i}{d} \right) & d|i \\
0 & \textnormal{otherwise}
\end{cases}\]
so that our formula for $\pi'_q(i)$ can be written as 
\[\frac{1}{i} \sum_{d|i} q^d \mu\left( \frac{i}{d}\right) \ = \ \frac{1}{i} \sum_{d = 1}^i q^d \nu_i(d).\]
Substituting this expression for $\pi’(i)$ and expanding each logarithm as a Taylor series gives
\begin{align}
\left| \sum_{i=1}^n \pi’(i) \log \Big( 1 - \frac{1}{q^i} \Big) \right|
& \ = \ \sum_{i=1}^n \left( \frac{1}{i} \sum_{d=1}^i q^d \nu_i(d) \right) \left( \sum_{k=1}^{\infty} \frac{1}{kq^{ik}} \right) \nonumber \\
&= \ \sum_{i=1}^n \sum_{d=1}^i \sum_{k=1}^{\infty} \left(\frac{\nu_i(d)}{ik} \cdot \frac{1}{q^{ik-d}}\right). \label{eq:triple sum}
\end{align}
Since $d \leq ik$ for all terms in this triple sum, it can be written as a power series of the form $\sum_{j=0}^\infty c_j \cdot \frac{1}{q^j}$
for some coefficients $c_j$. In particular, it will be the case that
\[c_j \ = \ \sum_{d=1}^n \frac{1}{j+d} \sum_{\substack{r | \frac{j+d}{d} \\ r \leq n/d}} \mu(r).\]

To see why this is true, note that the terms in \eqref{eq:triple sum} which contribute to $c_j$ are exactly those for which $ik = j+d$. Since $\nu_i(d) = 0$ for all $d > i$, we can extend the sum over $d$ to include values up to $d=n$. Furthermore, because the sum has finitely many nonzero terms, we can interchange the order of summation so that
\[\sum_{i=1}^n \sum_{d=1}^i \sum_{k=1}^\infty \left( \frac{\nu_i(d)}{ik} \cdot \frac{1}{q^{ik-d}} \right)
\ = \  \sum_{d=1}^n \sum_{k=1}^\infty \sum_{i=1}^n \left( \frac{\nu_i(d)}{ik} \cdot \frac{1}{q^{ik-d}}\right).\]
For any fixed $d$, its contribution to $c_j$ is
\[\sum_{\substack{i,k \\ ik = j+d \\ i \leq n}} \frac{\nu_i(d)}{ik}
\ = \ \frac{1}{j+d} \sum_{\substack{i,k \\ ik = j+d \\ i \leq n}} \nu_i(d)
\ = \  \frac{1}{j+d}\sum_{\substack{ i | (j+d) \\ i \leq n}} \nu_i(d) \ = \  \frac{1}{j+d} \sum_{\substack{r | \frac{j+d}{d} \\ r \leq n/d}} \mu(r),\]
where we have made the substitution $r = \frac{i}{d}$ and used the definition of $\nu$ in the last equality. Summing over all $d$ gives us the desired expression for $c_j$. 

For the specific case of $j = 0$,
\[c_0 \ = \ \sum_{d=1}^n \frac{1}{d} \sum_{r | 1} \mu(r) \ = \ \sum_{d=1}^n \frac{1}{d}.\]
Now consider the case in which $j \in [1, \frac{n}{2}]$. If $r | \frac{j+d}{d}$ then $\frac{j+d}{d}$ is an integer, which implies that $d \leq j$. It follows that $j+d \leq 2j \leq n$, so $r \leq \frac{n}{d}$ whenever $r | \frac{j+d}{d}$. As a result,
\[c_j \ = \ \sum_{d=1}^n \frac{1}{j+d} \sum_{r | \frac{j+d}{d}} \mu(r)
\ = \ \sum_{d=1}^n \frac{1}{j+d}\cdot 0 \ = \ 0,\]
where we have used the fact that the sum of $\mu(r)$ over all divisors of $\frac{j+d}{d}$ equals zero. Hence we can rewrite our expression as follows: \begin{align}
\left| \sum_{i=1}^n \pi'_q(i) \log \Big( 1 - \frac{1}{q^i} \Big) \right| \ = \ \sum_{j=0}^\infty c_j \cdot\frac{1}{q^j} \ = \ \sum_{d=1}^n \frac{1}{d} \  +
\sum_{j=\floor{n/2}+1}^\infty c_j \cdot\frac{1}{q^j}. \label{eq:logsumterms}
\end{align}
Our final task will be to bound the last summation. To do so, observe that 
\[\bigg| \sum_{r \in S} \mu(r) \bigg| \ \leq \ \frac{j+d}{2d}\]
whenever $S$ is a subset of divisors of $\frac{j+d}{d}$. This follows from the fact that $\mu(r)$ can only take on values of $1$, $-1$, or $0$ for at most $\frac{j+d}{d}$ different values of $r$, and that the sum of $\mu(r)$ over all divisors of $\frac{j+d}{d}$ equals zero. In particular, this holds when $S$ is the subset of divisors that are at most $\frac{n}{d}$, so
\[ |c_j| \ = \ \sum_{d=1}^n  \frac{1}{j+d} \Bigg| \sum_{\substack{r | \frac{j+d}{d} \\ r \leq  n/d }} \mu(r) \Bigg| \leq
\  \sum_{d=1}^n \frac{1}{j+d}\cdot\frac{j+d}{2d} \ = \ \sum_{d=1}^n \frac{1}{2d}.\]
 It follows that
\[ \left| \sum_{j = \floor{n/2}+1}^\infty c_j \cdot \frac{1}{q^j} \right| \ \leq  \ \sum_{j = \floor{n/2}+1}^\infty \bigg| c_j \cdot \frac{1}{q^j} \bigg| \ \leq \ \sum_{j = \floor{n/2}+1}^\infty \frac{1}{q^j} \sum_{d=1}^n \frac{1}{2d} \ = \ \frac{1}{2(q-1)q^{\floor{n/2}}} \sum_{d=1}^n \frac{1}{d}\]
which, along with equation \eqref{eq:logsumterms}, implies that
\[\Big( 1 - \frac{1}{2(q-1)q^{\floor{n/2}}} \Big) \sum_{i=1}^n \frac{1}{i} \ \leq \ \bigg| \sum_{i=1}^n \pi'_q(i) \log \Big( 1 - \frac{1}{q^i} \Big) \bigg| \ \leq \ \Big( 1 + \frac{1}{2(q-1)q^{\floor{n/2}}} \Big) \sum_{i=1}^n \frac{1}{i}.\qedhere\]
\end{proof}

\begin{proposition} \label{prop.mertensub}
 \[\prod_{\substack{p \in \mathcal{I}_q \\ \deg p \leq n}} \left(1 - \frac{1}{||p||} \right) \ \leq \ \frac{1}{e^\gamma n}.\]
\end{proposition}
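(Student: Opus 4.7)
The plan is to take natural logarithms and reduce the claim to
\[\Bigg|\sum_{i=1}^{n} \pi'_q(i)\log\!\Big(1-\tfrac{1}{q^{i}}\Big)\Bigg| \ \geq \ \gamma + \log n,\]
since all terms in the log-sum are negative. By the lower bound in Lemma \ref{2.0.5 Lemma}, the left-hand side is at least $(1-\epsilon_{n,q})H_{n}$, where $H_{n} := \sum_{i=1}^{n}\frac{1}{i}$ and $\epsilon_{n,q} := \frac{1}{2(q-1)q^{\lfloor n/2 \rfloor}}$. It therefore suffices to establish
\[H_{n}-\gamma-\log n \ \geq \ \epsilon_{n,q}\,H_{n}.\]

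For the left side I would invoke the classical integral-comparison estimate $H_{n}-\gamma-\log n \geq \frac{1}{2(n+1)}$ (which follows from comparing the harmonic sum with $\int \frac{dx}{x}$ on unit intervals, together with convexity of $1/x$). For the right side I would use the crude bound $H_{n}\leq 1+\log n$, so the inequality above is implied by
\[\frac{1}{2(n+1)} \ \geq \ \frac{1+\log n}{2(q-1)q^{\lfloor n/2\rfloor}}.\]
Because the right side decays exponentially in $n$ while the left side only polynomially, this holds for all $n$ beyond a modest threshold $n_{0}(q)$, with $n_{0}(q)$ shrinking rapidly as $q$ grows. In particular, for $q\geq 3$ a small fixed threshold suffices, while for $q=2$ the threshold is still easily identified by hand.

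For the finitely many remaining pairs $(q,n)$ with $n \leq n_{0}(q)$, concentrated at $q\in\{2,3\}$ and small $n$, I would verify the proposition directly: using Gauss' formula to tabulate $\pi'_{q}(i)$, one computes $\prod_{\deg p\leq n}(1-1/\|p\|)$ exactly (a finite rational) and checks that it is at most $1/(e^{\gamma}n)$. The main obstacle is precisely this boundary regime — the cancellation identity exploited in Lemma \ref{2.0.5 Lemma} is asymptotically sharp but surrenders a non-negligible multiplicative factor for small $n,q$, while the analytic slack $H_{n}-\gamma-\log n$ is itself only of order $1/n$; these must be balanced carefully, and a short finite computation is the cleanest way to seal off the residual cases.
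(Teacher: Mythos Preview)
Your argument is correct and follows essentially the same route as the paper: apply Lemma~\ref{2.0.5 Lemma} to reduce to $(1-\epsilon_{n,q})H_n \geq \gamma + \log n$, use a lower bound of order $1/n$ for $H_n - \gamma - \log n$, observe that the exponential decay of $\epsilon_{n,q}H_n$ forces the analytic inequality for all $n$ past a small threshold, and finish the residual cases by direct computation. The only cosmetic difference is the particular harmonic-number estimate chosen --- the paper invokes the P\'olya--Szeg\H{o} bound $H_n \geq \log n + \gamma + \tfrac{1}{2n} - \tfrac{1}{8n^2}$ rather than your $H_n - \gamma - \log n \geq \tfrac{1}{2(n+1)}$ --- which slightly shifts the threshold (the paper gets away with checking $q=2$, $n\leq 9$; your cruder bound $H_n \leq 1+\log n$ pushes this a couple of values higher) but changes nothing structurally.
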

\begin{proof}
From Lemma $\ref{2.0.5 Lemma}$, we have
\[- \sum_{i=1}^n \pi'_q(i) \log \left( 1 - \frac{1}{q^i} \right) \ \geq \ \sum_{i=1}^n \frac{1}{i} \left(1 - \frac{1}{2(q-1)q^{n/2}} \right).\]
For the harmonic number $\sum_{i=1}^{n} \frac{1}{i}$, P\'{o}lya and Szerg\H{o} \cite{harmonic} give a lower bound of 
\[\log n + \gamma + \frac{1}{2n} - \frac{1}{8n^2},\]
which we can substitute into our inequality to obtain 
\[\sum_{i=1}^n \pi'_q(i) \log \left( 1 - \frac{1}{q^i} \right) \ \leq \ -\log n - \gamma - \frac{1}{2n} + \frac{1}{8n^2} + \frac{1}{2(q-1)q^{n/2}}\left(\log n + \gamma + \frac{1}{2n} - \frac{1}{8n^2} \right). \]
We claim that in all but finitely many cases,
\[- \frac{1}{2n} + \frac{1}{8n^2} + \frac{1}{2(q-1)q^{n/2}}\left(\log n + \gamma + \frac{1}{2n} - \frac{1}{8n^2} \right) \ \leq \ 0.\]
First consider the case of $q=3$. It can be calculated that the derivative
of the expression with respect to $n$ is negative for all $n \geq 3$, so the expression is decreasing in $n$. The inequality can then be verified computationally for $n \leq 3$, so the inequality holds for all $n$ when $q = 3$. Furthermore, the expression is decreasing in $q$, so the fact that the inequality holds for $q = 3$ implies that it holds for all $q \geq 3$. Setting $q=2$ and taking the derivative with respect to $n$, it can be shown that the inequality also holds for $q = 2$ when $n \geq 10$. Hence in all of these cases
\[\sum_{i=1}^n \pi'_q(i) \log \left( 1 - \frac{1}{q^i} \right) \ \leq \ - \log n - \gamma, \]
and so 
\[\prod_{\substack{p \in \mathcal{I}_q \\ \deg p \leq n}} \left(1 - \frac{1}{||p||} \right) \  \leq \ \frac{1}{e^\gamma n}.\]
The remaining cases in which $q=2$ and $n \leq 9$ can be verified computationally to complete the proof.
\end{proof}

\begin{proposition} \label{prop.mertenslb}
Except in the case $q = 2$ and $n = 1$,

\[\prod_{\substack{p \in \mathcal{I}_q \\ \deg p \leq n}} \Big( 1 - \frac{1}{||p||} \Big) \ > \ \frac{1}{e^\ga(n+1)}.\]
\end{proposition}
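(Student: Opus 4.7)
The plan is to mirror the argument of Proposition \ref{prop.mertensub}, but with the opposite sides of Lemma \ref{2.0.5 Lemma} and of a standard estimate for the harmonic number $H_n := \sum_{i=1}^n \frac{1}{i}$. The upper bound in Lemma \ref{2.0.5 Lemma} gives
\[-\sum_{i=1}^n \pi'_q(i) \log\left(1 - \frac{1}{q^i}\right) \ \leq \ \left(1 + \frac{1}{2(q-1)q^{\floor{n/2}}}\right) H_n,\]
so after exponentiating, matching against $1/(e^\gamma (n+1))$ reduces the claim to
\[H_n + \frac{H_n}{2(q-1)q^{\floor{n/2}}} \ < \ \log(n+1) + \gamma.\]

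Next I would use the P\'olya-Szeg\H{o} upper bound $H_n \leq \log n + \gamma + \frac{1}{2n}$ on the small error term, together with the expansion
\[\log(n+1) + \gamma - H_n \ = \ \log\!\left(1 + \tfrac{1}{n}\right) - \tfrac{1}{2n} + O\!\left(\tfrac{1}{n^2}\right) \ = \ \frac{1}{2n} - \frac{5}{12 n^2} + O\!\left(\tfrac{1}{n^3}\right),\]
which is positive for all $n \geq 1$. The required inequality then reduces to showing
\[\frac{\log n + \gamma + \frac{1}{2n}}{2(q-1)q^{\floor{n/2}}} \ \leq \ \log(n+1) + \gamma - H_n,\]
i.e.\ (after multiplying by $2n$) that $n(\log n + \gamma + \frac{1}{2n}) \leq (q-1)q^{\floor{n/2}}\bigl(1 - O(1/n)\bigr)$. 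Since the left-hand side is polylogarithmic while the right-hand side grows exponentially in $n$, this holds with room to spare for all but finitely many pairs $(q,n)$.

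The main obstacle is the finite collection of small cases, especially for $q = 2$ where $q^{\floor{n/2}}$ grows slowest. Following the template of Proposition \ref{prop.mertensub}, I expect to (i) verify that the difference of the two sides is monotone in $n$ beyond some small threshold by checking the sign of a derivative, (ii) use monotonicity in $q$ to reduce to $q = 2$, and (iii) verify the remaining small $n$ directly. The exception $q = 2$, $n = 1$ emerges naturally: there the product is $(1-\tfrac{1}{2})^2 = \tfrac{1}{4}$, while $1/(2 e^\gamma) \approx 0.281$, so the inequality just fails. For $q=2$, $n = 2$ one already has $\tfrac{3}{16} \approx 0.1875 > 1/(3 e^\gamma) \approx 0.1872$, and the margin only widens from there.
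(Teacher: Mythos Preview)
Your plan is essentially the paper's proof: start from the upper bound in Lemma~\ref{2.0.5 Lemma}, reduce to an inequality comparing $\log(n+1)+\gamma-H_n$ against the exponentially small term $\frac{H_n}{2(q-1)q^{\floor{n/2}}}$, then handle large $q$ first and mop up small $(q,n)$ by monotonicity and direct computation. The only substantive difference is the harmonic-number estimate. The paper invokes Young's explicit bound $H_n \le \log(n+1)+\gamma-\frac{1}{2(n+1)}$, which immediately gives $\log(n+1)+\gamma-H_n \ge \frac{1}{2(n+1)}$ and turns the comparison into the clean inequality
\[
\frac{1}{2n+2} \ \ge \ \frac{1}{2(q-1)q^{\floor{n/2}}}\Big(\log(n+1)+\gamma-\tfrac{1}{2n+2}\Big).
\]
Your route via the asymptotic $\log(n+1)+\gamma-H_n = \tfrac{1}{2n} - \tfrac{5}{12n^2}+O(1/n^3)$ is not yet a usable lower bound; to make the step ``after multiplying by $2n$'' rigorous you still need an explicit inequality of order $1/n$, and Young's bound is exactly that. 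With that substitution your argument and the paper's coincide.
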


\begin{proof}
Once again from Lemma \ref{2.0.5 Lemma}, we have
\[-\sum_{i=1}^n \pi'_q(i) \log \Big( 1 - \frac{1}{q^i} \Big) \ \leq \ -\Big( 1 + \frac{1}{2(q-1)q^{\floor{n/2}}} \Big) \sum_{i=1}^n \frac{1}{i}.\]
From Young \cite{harmonic2}, we know that $\sum_{i=1}^n \frac{1}{i}$ is bounded above by 
\[\log(n+1)  + \ga - \frac{1}{2n+2},\]
so our inequality becomes
\begin{align*} \sum_{i=1}^n \pi'_q(i) \log \Big( 1 - \frac{1}{q^i} \Big)  \ &\geq \ -\log(n+1)  - \ga + \frac{1}{2n+2} - \frac{1}{2(q-1)q^{\floor{n/2}}} \left( \log(n+1) + \ga - \frac{1}{2n+2}\right).
\end{align*}
If we can show that the sum of the last two terms is nonnegative, then we will have 
\[ \sum_{i=1}^n \pi'_q(i) \log \left( 1 - \frac{1}{q^i} \right) \ \geq \ -\log (n+1) - \gamma,\]
upon which exponentiating both sides gives the desired inequality. 

We first prove this is true for $q \geq 4$. Our expression
\[\frac{1}{2n+2} - \frac{1}{2(q-1)q^{\floor{n/2}}} \left( \log(n+1) + \ga - \frac{1}{2n+2} \right)\]
is increasing with respect to $q$, so it suffices to consider the case when $q = 4$. Because $4^{\floor{n/2}} \geq 2^{n-1}$, we only need to demonstrate that
\[\frac{1}{3 \cdot 2^n}\left(\log(n+1) + \ga - \frac{1}{2n+2} \right) \ \leq \ \frac{1}{2n+2}.\]
This inequality can be computationally verified for $n = 1$. For $n \geq 2$, we have $\log(n+1) + \ga - \frac{1}{2n+2} < n$, so it suffices to show
\[\frac{n}{3 \cdot 2^n} \ \leq \ \frac{1}{2n+2},\]
or equivalently, $0 \leq 3 \cdot 2^n - 2n^2 - 2n$. The right hand side equals zero when $n = 2$ or $n = 3$, and its derivative $3\cdot 2^n \log 2 - 4n + 2$ is positive for $n \geq 3$, so the inequality is true for all $n$ when $q \geq 4$. 

Similar analytic arguments can be used to show that the inequality is true for $q=3$ when $n \geq 8$ and for $q=2$ when $n \geq 18$, and the remaining cases can be checked through direct computation.
\end{proof}

\section{An Upper Bound for the Erd\H{o}s Sum} 
Our bounds on the Mertens product are particularly well-suited for bounding subsets of a primitive set $A$ whose members share a smallest irreducible common factor. Formally speaking, we choose an arbitrary ordering of $\I_q$ that respects increasing degree and define $p(f)$ and $P(f)$ to be the monic irreducible factor of $f$ which has least and greatest index according to this ordering, respectively. Then, we let $A'_p = \{a \in A: p(a) = p\}$ and note that $\{A_p'\}_{p \in \I_q}$ is a partition of $A$. Because the Erd\H{o}s sum converges for any primitive set $A$ \cite{us19}, we can obtain an upper bound for $\f(A)$ by summing together upper bounds for $\f(A'_p)$ over all monic irreducibles $p$. 

When $p \notin A$, we can bound $\f(A'_p)$ by adapting an argument that Lichtman and Pomerance \cite{lichtmanPom19} developed for the integer case. We let $g(a)$ represent the asymptotic density of monic multiples of $a$ all of whose factors have degree at least that of $P(a)$, whose formula is given by
\[g(a) \ = \  \frac{1}{||a||} \prod_{\substack{f \in \mathcal{I}_q \\ f < P(a)}} \Big( 1 - \frac{1}{||f||}\Big).\]
We also define $d(f) = \deg p(f)$ and $D(f) = \deg P(f)$. Then we have the following bound for $\f(A_p')$:

\begin{proposition}\label{prop.g} 
Let $A \subset \F_q[x]$ be primitive and $p \notin A$ be irreducible. Unless $q = 2$ and $\deg p = 1$, 
\[\f(A_p')  \ < \  e^{\gamma} g(p) .\]
\end{proposition}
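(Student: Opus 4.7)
The approach is to adapt the argument of Lichtman and Pomerance \cite{lichtmanPom19} to $\F_q[x]$. For each monic $a \in \F_q[x]$, define
\[M(a) \ \coloneqq \ \{am \in \F_q[x] : m \text{ monic with every irreducible factor at least } P(a) \text{ in the ordering}\},\]
which by sieving over the finitely many irreducibles smaller than $P(a)$ has asymptotic density $g(a)$. The first key step is to establish pairwise disjointness of $\{M(a)\}_{a \in A}$. If $n = a_1 m_1 = a_2 m_2$ lies in $M(a_1) \cap M(a_2)$ with, without loss of generality, $P(a_1) \leq P(a_2)$ in the ordering, then a comparison of the $f$-adic valuations of $n$, $a_1$, and $a_2$ over the two ranges $f \leq P(a_1)$ and $f > P(a_1)$ shows $v_f(a_1) \leq v_f(a_2)$ for every irreducible $f$; hence $a_1 \mid a_2$, and primitivity of $A$ forces $a_1 = a_2$.

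Now restrict to $a \in A_p'$. Since the smallest irreducible factor of every such $a$ is $p$, every irreducible factor of any element of $M(a)$ is at least $p$ in the ordering, so $M(a) \subseteq M(p)$. Summing densities of these disjoint subsets of $M(p)$ yields
\[\sum_{a \in A_p'} g(a) \ \leq \ g(p).\]
To convert this into a bound on $\f(A_p')$, observe that $p \notin A$ together with $p \mid a$ forces every $a \in A_p'$ to be non-irreducible, i.e.\ $\Omega(a) \geq 2$, which gives $\deg a \geq D(a) + 1$. Applying Proposition \ref{prop.mertenslb} with $n = D(a)$ gives
\[g(a) \ \geq \ \frac{1}{||a||} \prod_{\deg f \leq D(a)} \left(1 - \frac{1}{||f||}\right) \ > \ \frac{1}{||a||\, e^\gamma (D(a)+1)},\]
where the first inequality uses that $\{f : f < P(a)\}$ is a subset of $\{f : \deg f \leq D(a)\}$. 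Thus $\frac{1}{||a||\deg a} \leq \frac{1}{||a||(D(a)+1)} < e^\gamma g(a)$, and summing over $A_p'$ produces $\f(A_p') < e^\gamma g(p)$; the case $A_p' = \varnothing$ is immediate from $g(p) > 0$.

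The main subtlety, and the reason for the exceptional case in the statement, is ensuring that Proposition \ref{prop.mertenslb} applies to every $a \in A_p'$. Its sole exception, $q = 2$ and $n = 1$, is invoked only when $D(a) = 1$; since $p \mid a$ forces $D(a) \geq \deg p$, this requires $\deg p = 1$, and combined with $q = 2$ this is precisely the excluded case. In every other scenario either $\deg p \geq 2$ (forcing $D(a) \geq 2$) or $q \geq 3$, so the Mertens lower bound applies uniformly and the argument goes through.
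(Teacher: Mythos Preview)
Your proof is correct and follows essentially the same approach as the paper: you bound $\f(a) < e^\gamma g(a)$ via the Mertens lower bound (Proposition~\ref{prop.mertenslb}) combined with $\deg a \geq D(a)+1$, then use disjointness of the sets $M(a)$ to obtain $\sum_{a \in A_p'} g(a) \leq g(p)$. Your treatment of the exceptional case $q=2$, $\deg p = 1$ and your valuation-based justification of disjointness are slightly more explicit than the paper's, but the argument is the same.
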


\begin{proof}
For each $a \in A'_p$, Proposition \ref{prop.mertenslb} gives
\[g(a) \ = \  \frac{1}{||a||} \prod_{\substack{f \in \mathcal{I}_q \\ f < P(a)}} \Big( 1 - \frac{1}{||f||}\Big) \ > \  \frac{1}{||a||} \prod_{\substack{f \in \I_q \\ \deg f \leq D(a)}} \Big( 1 - \frac{1}{||f||}\Big) 
 \ \geq \  \frac{1}{e^\ga (D(a)+1) ||a||}.\]Note that this holds even in the case $q = 2$, since $\deg p > 1$ implies $D(a) > 1$. When $p \notin A$, we have $\deg a \geq D(a) + 1$, so
\[g(a) \ > \  \frac{1}{e^\gamma ||a|| \deg a } \ = \  \frac{1}{e^{\gamma}} \f(a).\] This gives us the preliminary upper bound 
\[\f(A_p') \ = \  \sum_{a \in A_p'} \f(a)  \ < \  e^\ga \sum_{a \in A_p'} g(a).\]

To bound this last summation, note that $A_p' \subset A$ is primitive. Thus if we define $S_a = \{fa: p(f) \geq P(a)\}$ for each $a \in A'_p$, we see that the $S_a$ must be pairwise disjoint. Because $S_a$ consists of the monic multiples of $a$ whose other irreducible factors have index at least $P(a)$, the asymptotic density of $S_a$ is $g(a)$. $S_a$ is contained in the set of all polynomials $f$ such that $p(f) = p(a) = p$, which has asymptotic density $g(p)$. Because the $S_a$ are disjoint,
\[\sum_{a \in A_p'} g(a)  \ \leq \  g(p).\]
It follows that $\f(A_p') \leq e^\ga g(p)$, as desired.
\end{proof}

When $\deg p = 1$, it is possible to obtain bounds for $\f(A'_p)$ that are tighter than those which would be obtained by applying Proposition \ref{prop.g} directly. In order to do so, we will partition each $A'_p$ into subsets $A^t$, which consist of elements of $A'_p$ that are exactly divisible by $t$. 

\begin{proposition}\label{prop.t}
Let $t$ be a product of degree $1$ irreducibles and let $A$ be primitive. Define $A^t = \{a \in A: t|a \textnormal{ and } d(a/t) \geq 2\}$. If $t \notin A$,
\[\f(A^t)  \ < \  \frac{e^\gamma}{||t||}\sum_{\substack{p \in \mathcal{I}_q\setminus A \\ \deg p > 1}} g(p).\]
\end{proposition}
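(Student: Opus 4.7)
The plan is to mimic the proof of Proposition \ref{prop.g}, with the role of the smallest irreducible factor of $a$ played by the smallest irreducible factor of $a/t$, and with an extra factor of $1/\|t\|$ arising from the restriction to multiples of $t$. For each irreducible $p \in \I_q$ with $\deg p \geq 2$, set $A^t_p := \{a \in A^t : p(a/t) = p\}$, so that $\{A^t_p\}_p$ partitions $A^t$. I first rule out $p \in A$: if $a = tb \in A^t_p$ with $p \in A$, then $p \mid b \mid a$ and $\deg a = \deg t + \deg b > \deg p$ (using $\deg t \geq 1$), so $p$ properly divides $a$, contradicting that $A$ is primitive. Hence $A^t_p = \emptyset$ whenever $p \in A$, and the outer sum can be restricted to $p \notin A$.

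Next, for each $p \notin A$ with $\deg p > 1$ and each $a \in A^t_p$, I would show that $\f(a) < e^\gamma g(a)$. Since the irreducible factors of $t$ have degree $1$ and those of $a/t$ have degree $\geq 2$, one has $D(a) = D(a/t) \geq 2$ and $\deg a = \deg t + \deg(a/t) \geq 1 + D(a)$. Proposition \ref{prop.mertenslb} then applies (with $n = D(a) \geq 2$, outside the excluded case) to give $g(a) > 1/(e^\gamma(D(a)+1)\|a\|)$, which combined with $1/\deg a \leq 1/(D(a)+1)$ yields $\f(a) \leq 1/(\|a\|(D(a)+1)) < e^\gamma g(a)$, and hence $\f(A^t_p) < e^\gamma \sum_{a \in A^t_p} g(a)$.

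Finally, I would bound $\sum_{a \in A^t_p} g(a)$ by adapting the disjoint-sets argument from Proposition \ref{prop.g}: the sets $S_a := \{fa : p(f) \geq P(a)\}$ have density $g(a)$ and are pairwise disjoint because $A^t_p \subset A$ is primitive. The key new step is to verify that each $S_a$ is contained in $T_p := \{h \in \F_q[x] : t \mid h \text{ and } p(h/t) = p\}$: writing $a = tb$ and taking $h = fa$ with $p(f) \geq P(a) \geq p$, we have $t \mid h$, while $h/t = fb$ still has smallest factor $p$ since every factor of $f$ has index $\geq p$. The bijection $h \mapsto h/t$ identifies $T_p$ with $\{c \in \F_q[x] : p(c) = p\}$, so $T_p$ has density $g(p)/\|t\|$, and disjointness gives $\sum_{a \in A^t_p} g(a) \leq g(p)/\|t\|$. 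Combining with the previous paragraph and summing the bound $\f(A^t_p) < e^\gamma g(p)/\|t\|$ over all $p \notin A$ with $\deg p > 1$ yields the proposition. The only genuinely new ingredient beyond Proposition \ref{prop.g} is the density computation for $T_p$ and the containment $S_a \subset T_p$, which is the one place I would need to be careful.
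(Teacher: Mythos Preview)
Your proposal is correct and follows essentially the same approach as the paper. The paper phrases the argument by passing to the quotient set $B^t=\{a/t:a\in A^t\}$ and invoking Proposition~\ref{prop.g} for each $(B^t)'_p$, which forces a separate case when $p\in B^t$ (i.e.\ $tp\in A$); your direct adaptation of the Proposition~\ref{prop.g} argument to $A^t$, using $\deg a\ge 1+D(a)$ and the containment $S_a\subset T_p$, handles that case uniformly and is a mild streamlining of the same idea.
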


\begin{proof}
Let $B^t = \{a/t : a \in A^t\}$. Note that $B^t$ is primitive, and furthermore that if $t \notin A$, then $B^t \neq \{1\}$\ignore{ and $\f((B^t)'_1) = \emptyset$???}. Because $t \notin A$, we have
\[\f(A^t) \ = \  \sum_{\substack{p \in \mathcal{I}_q \\ \deg p > 1}}  \f(t \cdot (B^t)'_p)\]
where set multiplication is defined in the natural way: $f\cdot S = \{f \cdot s : s \in S\}$. If $p \notin (B^t)'_p$ then Proposition \ref{prop.g} gives us the strict inequality in
\[\f(t \cdot (B^t)'_p)  \ \leq \   \frac{\f((B^t)'_p)}{||t||}  \ < \  \frac{e^\gamma g(p)}{||t||}.\]
If $p \in (B^t)'_p$ then $\f(t \cdot(B^t)'_p) = \f(tp)$. Using Proposition \ref{prop.mertenslb}, 
\[\f(tp) \ = \  \frac{1}{||tp|| \deg tp}  \ \leq \  \frac{1}{||tp||(\deg p + 1)}  \ < \  \frac{e^\gamma}{||tp||} \prod_{\substack{f \in \mathcal{I}_q \\ \deg f \leq \deg p}} \left(1 - \frac{1}{||f||} \right)  \ < \  \frac{e^\gamma g(p)}{||t||}.\]
Note that if $\f((B^t)'_p) \neq 0$ then $A$ contains nontrivial multiples of $p$ and thus $p \notin A$. It follows that
\[ \sum_{\substack{p \in \mathcal{I}_q \\ \deg p > 1}} \f(t \cdot (B^t)'_p)  \ < \  \frac{e^\gamma}{||t||}\sum_{\substack{p \in \mathcal{I}_q\setminus A \\ \deg p > 1}} g(p). \qedhere\]
\end{proof}

By summing over all possible $t$, we can establish an upper bound for $\sum_{p \in \mathcal{I}_q, \deg p = 1} \f(A'_p)$. While our initial bound will depend on the proportion of irreducibles not contained in $A$, we will later determine for which proportions this bound is maximized to obtain an upper bound independent of this quantity. Because the proof depends on Proposition \ref{prop.g}, which does not always apply when $q = 2$, we will first establish the result for $q \geq 3$ and then consider the case $q = 2$  separately. 

\begin{lemma}\label{lem.1}
Let $\alpha$ be the proportion of degree $1$ irreducibles not contained in $A$. When $q \geq 3$,
\begin{equation}\sum_{\substack{p \in \mathcal{I}_q \\ \deg p = 1}} \f(A'_p)  \ < \  (1-\alpha) + e^\gamma \sum_{\substack{p \in \mathcal{I}_q\setminus A \\ \deg p > 1}} g(p) + e^\gamma \left(1- \frac{1}{q}\right)^q \left( \left(1- \frac{1}{q} \right)^{-\alpha q} - \alpha - 1 \right).\label{eq:lembd} \end{equation}
\end{lemma}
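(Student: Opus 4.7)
The plan is to decompose the sum on the left-hand side by partitioning each $a \in A$ with $d(a) = 1$ according to its \emph{degree-$1$ part} $t(a)$, the monic product of $a$'s degree-$1$ irreducible factors counted with multiplicity. If $T$ denotes the set of monic products of degree-$1$ irreducibles, then $t(a) \in T \setminus \{1\}$ for any such $a$. Setting $A^t := \{a \in A : t(a) = t\}$ (which coincides with $\{a \in A : t\mid a,\ d(a/t) \geq 2\}$ from Proposition \ref{prop.t}), every such $a$ lies in a unique $A^t$, yielding
\[\sum_{\substack{p \in \I_q \\ \deg p = 1}} \f(A'_p) \ = \ \sum_{t \in T\setminus\{1\}} \f(A^t).\]
Letting $P_1$ denote the set of degree-$1$ irreducibles, a short primitivity argument shows that whenever $t$ has a factor $p_0 \in P_1 \cap A$ and $t \neq p_0$, one has $A^t = \emptyset$. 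Consequently the only nonzero contributions come from $t \in P_1 \cap A$ (where primitivity gives $A^t = \{t\}$, summing to $(1-\alpha)$) and from $t \in T' \setminus \{1\}$, where $T'$ is the set of monic products of irreducibles in $P_1 \setminus A$.

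I would then handle the $T'\setminus\{1\}$ contributions in two layers. For $t = p \in P_1 \setminus A$, Proposition \ref{prop.t} applied with the choice $t=p$ gives $\f(A^p) < C/q$, where $C := e^\gamma \sum_{p' \in \I_q \setminus A,\ \deg p' > 1} g(p')$; summing over the $\alpha q$ such primes yields at most $\alpha C \leq C$. For $t \in T'$ with $\deg t \geq 2$ I would establish the uniform bound
\[\f(A^t) \ \leq \ \frac{e^\gamma(1-1/q)^q}{||t||}.\]
When $t \in A$, primitivity forces $A^t = \{t\}$, so this reads $1/(||t||\deg t) \leq 1/(2||t||)$, and one checks directly that $e^\gamma(1-1/q)^q \geq 1/2$ for $q \geq 3$ (with Proposition \ref{prop.mertensub} at $n=1$ also giving $e^\gamma(1-1/q)^q \leq 1$). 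When $t \notin A$, Proposition \ref{prop.t} gives $\f(A^t) < C/||t||$, and the crucial point is that $C \leq e^\gamma(1-1/q)^q$: this follows by bounding $C$ by its full-tail analogue $e^\gamma\sum_{p \in \I_q,\ \deg p > 1} g(p)$ and applying the telescoping sieve identity $\sum_{p \in \I_q} g(p) = 1$ together with the explicit geometric-series evaluation $\sum_{\deg p = 1} g(p) = 1 - (1-1/q)^q$.

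Finally I would sum the uniform bound against the generating-function identity $\sum_{t \in T'} 1/||t|| = (1-1/q)^{-\alpha q}$, obtained as an $\alpha q$-fold product of geometric series, one for each $p \in P_1 \setminus A$. Subtracting the $t=1$ contribution ($1$) and the degree-$1$ contributions ($\alpha$) gives
\[\sum_{\substack{t \in T' \\ \deg t \geq 2}} \f(A^t) \ \leq \ e^\gamma(1-1/q)^q\bigl((1-1/q)^{-\alpha q} - \alpha - 1\bigr),\]
and assembling the three layers produces the claimed inequality since $\alpha C \leq C$. The principal difficulty is spotting and proving the universal bound $C \leq e^\gamma(1-1/q)^q$: without it, the estimate $C/||t||$ for $t \notin A$ cannot be absorbed into the same clean form as the $t \in A$ estimate, and the coefficient of $C$ in the final sum would blow up to $(1-1/q)^{-\alpha q} - 1$, which exceeds $1$ once $\alpha$ is bounded away from $0$.
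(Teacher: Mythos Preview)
Your argument is correct and follows essentially the same route as the paper's proof: the same decomposition by degree-$1$ part $t$, the same reduction to $t \in P_1 \cap A$ and $t \in T'$, the same use of Proposition~\ref{prop.t} together with the inequality $e^\gamma(1-1/q)^q \geq 1/2$ for $q \geq 3$, the same sieve computation $\sum_{\deg p > 1} g(p) = (1-1/q)^q$, and the same Euler-product evaluation of $\sum_{t \in T'} 1/\|t\|$. The only cosmetic difference is that you package the $t \in A$ and $t \notin A$ cases into a single uniform bound $\f(A^t) \leq e^\gamma(1-1/q)^q/\|t\|$, whereas the paper keeps them as two separate sums before merging; your ``crucial point'' $C \leq e^\gamma(1-1/q)^q$ is exactly the step where the paper weakens the third sum.
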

\begin{proof}
Let $K$ be the set of degree 1 irreducibles not contained in $A$, and note that $|K| = \alpha q$. Let $t$ denote a product of degree 1 irreducibles as before. If $t$ is a multiple of a degree $1$ irreducible contained in $A$, then $A^t = \emptyset$ and so $\f(A^t) = 0$. Hence
\[\sum_{\substack{p \in \mathcal{I}_q \\ \deg p = 1}} \f(A'_p) \ = \  \sum_{\substack{t \in A\\ \deg t = 1}} \f(t) +
\sum_{\substack{t \notin A\\ \deg t = 1}} \f(A^t) +
\sum_{\substack{t \notin A, \mathcal{P}(t) \subset K\\ \deg t > 1}} \f(A^t) +
\sum_{\substack{t \in A, \mathcal{P}(t) \subset K\\ \deg t > 1}} \f(t),\]
where $\mathcal{P}(t)$ denotes the set of irreducible factors of $t$. The first sum equals 
\[\sum_{\substack{t \in A\\ \deg t = 1}} \frac{1}{q} \ = \  \frac{(1-\alpha)q}{q} = 1-\alpha,\]
and we can use Proposition \ref{prop.t} to bound the second sum as
\[\sum_{\substack{t \notin A\\ \deg t = 1}} \f(A^t)  \ < \  \sum_{\substack{t \notin A \\ \deg t = 1}} \frac{e^\gamma}{||t||}\sum_{\substack{p \in \mathcal{I}_q\setminus A \\ \deg p > 1}} g(p) \ = \  \alpha e^\gamma \sum_{\substack{p \in \mathcal{I}_q\setminus A \\ \deg p > 1}} g(p).\]
Similarly, we can bound the third sum as
\[\sum_{\substack{t \notin A, \mathcal{P}(t) \subset K\\ \deg t > 1}} \f(A^t)  \ < \  \sum_{\substack{t \notin A,\mathcal{P}(t) \subset K\\ \deg t > 1}} \frac{e^\gamma}{||t||}\sum_{\substack{p \in \mathcal{I}_q\setminus A \\ \deg p > 1}} g(p)  \ \leq \  \sum_{\substack{t \notin A,\mathcal{P}(t) \subset K\\ \deg t > 1}} \frac{e^\gamma}{||t||}\sum_{\substack{p \in \mathcal{I}_q\\ \deg p > 1}} g(p).\]
Finally, when $q\geq 3$ and $\deg t > 1$, we can deduce that
\begin{equation}
    \frac{1}{||t||\deg t}  \ \leq \  \frac{1}{2||t||}  \ \leq \  \frac{e^\gamma}{||t||} \left(1 - \frac{1}{q} \right)^q, \label{eq:degreebd}
\end{equation} 
where the last inequality holds because $\left(1-\frac{1}{q}\right)^q$ is increasing with $q$ and  $e^\ga \left(1 - \frac{1}{3} \right)^3 = 0.52772\ldots > \frac{1}{2}$. This allows us to bound the final sum as
\[ 
\sum_{\substack{t \in A, \mathcal{P}(t) \subset K\\ \deg t > 1}} \f(t)  \ \leq \  \sum_{\substack{t \in A, \mathcal{P}(t) \subset K\\ \deg t > 1}} \frac{e^\gamma}{||t||} \left(1 - \frac{1}{q} \right)^q,\]
upon which summing together our four bounds gives 
\[\sum_{\substack{p \in \mathcal{I}_q \\ \deg p = 1}} \f(A'_p)  \ < \  (1-\alpha) +\alpha e^\gamma \sum_{\substack{p \in \mathcal{I}_q\setminus A \\ \deg p > 1}} g(p) + \sum_{\substack{t \notin A,\mathcal{P}(t) \subset K\\ \deg t > 1}} \frac{e^\gamma}{||t||}\sum_{\substack{p \in \mathcal{I}_q\\ \deg p > 1}} g(p) + \sum_{\substack{t \in A, \mathcal{P}(t) \subset K\\ \deg t > 1}} \frac{e^\gamma}{||t||} \left(1 - \frac{1}{q} \right)^q.\]
To simplify the third term of this expression, note that
\[\sum_{\substack{p \in \mathcal{I}_q \\ \deg p = 1}} g(p) \ = \  \frac{1}{q}\sum_{i=0}^{q-1} \left(1 - \frac{1}{q} \right)^i \ = \  \frac{1}{q}\left(\frac{\left( 1 - \frac{1}{q} \right)^{q}-1}{\left( 1 - \frac{1}{q} \right) - 1} \right) \ = \  1 - \left(1 - \frac{1}{q}\right)^q.\]
Because every polynomial is divisible by an irreducible, $\sum_{p \in \mathcal{I}_q} g(p) = 1$, which means 
\[\sum_{\substack{p \in \mathcal{I}_q \\ \deg p > 1}} g(p) \ = \  1 - \sum_{\substack{p \in \mathcal{I}_q \\ \deg p = 1}} g(p) \ = \  \left(1 - \frac{1}{q}\right)^q.\]
Using this formula for $\sum_{\substack{p \in \mathcal{I}_q \\ \deg p > 1}}  g(p)$ and combining the last two terms of the expression, our bound becomes
\[ \sum_{\substack{p \in \mathcal{I}_q \\ \deg p = 1}} \f(A'_p) \ \leq \  (1-\alpha) +\alpha e^\gamma \sum_{\substack{p \in \mathcal{I}_q\setminus A \\ \deg p > 1}} g(p) + \sum_{\substack{\mathcal{P}(t) \subset K\\ \deg t > 1}} \frac{e^\gamma}{||t||} \left( 1 - \frac{1}{q}\right)^q.\]
We can evaluate the sum of reciprocals in the last term using an Euler-like product expansion because $\F_q[x]$ is a unique factorization domain (so that every $t$ is a product of exactly one combination of elements of $K$):
\[\sum_{\substack{\deg t > 1 \\ \mathcal{P}(t) \subset K}} \frac{1}{||t||} \ = \  \left(1 + \frac{1}{q} + \frac{1}{q^2} + \ldots \right)^{\alpha q} - \frac{\alpha q}{q} - 1 \ = \  \left(1 - \frac{1}{q} \right)^{-\alpha q} -\alpha  - 1. \]
Substituting this into our inequality above gives the desired bound.
\end{proof}

\begin{proposition}\label{prop.1}
 When $q \geq 3$,
\[\sum_{\substack{p \in \mathcal{I}_q \\ \deg p = 1}} \f(A'_p)  \ \leq \  \max\bigg\{ 1, e^\gamma \sum_{\substack{p \in \mathcal{I}_q\setminus A \\ \deg p > 1}} g(p) + e^\gamma \left( 1 - 2 \left(1 - \frac{1}{q} \right)^q \right)\bigg\} .\]
\end{proposition}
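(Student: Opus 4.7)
My plan is to exploit convexity of the bound from Lemma~\ref{lem.1} as a function of $\alpha$. Writing $S := \sum_{p \in \mathcal{I}_q \setminus A,\,\deg p > 1} g(p)$, I define
\[
h(\alpha) \ := \ (1-\alpha) + \alpha e^\gamma S + e^\gamma\left(1-\tfrac{1}{q}\right)^q\left(\left(1-\tfrac{1}{q}\right)^{-\alpha q} - \alpha - 1\right).
\]
Here I invoke the sharper intermediate bound actually derived inside the proof of Lemma~\ref{lem.1}, in which the $e^\gamma S$ summand carries a factor of $\alpha$ (without this, $h(0)$ would not collapse to $1$). The actual proportion $\alpha$ of degree-$1$ irreducibles missing from $A$ lies in $\{0,1/q,\ldots,1\}\subset[0,1]$ and satisfies $\sum_{\deg p = 1}\f(A'_p) < h(\alpha)$, so it suffices to prove $h(\alpha)\le\max\{h(0),h(1)\}$ for all $\alpha\in[0,1]$.

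The key step is to check that $h$ is convex on $[0,1]$. Every summand is affine in $\alpha$ except the term
\[
e^\gamma\left(1-\tfrac{1}{q}\right)^q\left(1-\tfrac{1}{q}\right)^{-\alpha q} \ = \ e^\gamma\left(1-\tfrac{1}{q}\right)^q\exp\!\left(-\alpha q\log\!\left(1-\tfrac{1}{q}\right)\right),
\]
which is a positive exponential in $\alpha$ (since $-\log(1-1/q)>0$) and hence convex. A convex function on a closed interval attains its maximum at an endpoint, so the reduction $h(\alpha)\le\max\{h(0),h(1)\}$ is immediate.

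It then remains only to evaluate $h$ at the two endpoints. Direct substitution gives $h(0)=1$, as the exponential piece equals $1$ and cancels against $-\alpha - 1$ at $\alpha=0$; and $h(1) = e^\gamma S + e^\gamma\bigl(1 - 2(1-1/q)^q\bigr)$, using $(1-1/q)^q \cdot (1-1/q)^{-q}=1$. These are precisely the two arguments of the $\max$ in the proposition, completing the argument. I do not expect a serious obstacle: the only bookkeeping subtlety is ensuring one uses the coefficient-$\alpha$ form of the Lemma~\ref{lem.1} bound, after which convexity does all the work.
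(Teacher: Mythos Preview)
Your proof is correct and follows essentially the same approach as the paper: both establish convexity of the Lemma~\ref{lem.1} bound in $\alpha$ (you via ``affine plus positive exponential,'' the paper via an explicit second derivative) and then evaluate at the endpoints $\alpha=0,1$. Your explicit flag about needing the intermediate $\alpha e^\gamma S$ form from inside the proof of Lemma~\ref{lem.1} is exactly right and matches what the paper actually differentiates.
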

\begin{proof}
We will show that the upper bound in Lemma \ref{lem.1} is maximized on $[0, 1]$ at $\alpha = 0$ or $\alpha = 1$. Fix the irreducible polynomials of degree at least 2 contained in $A$ and define
\[C \ := \ \sum_{\substack{p \in \mathcal{I}_q\setminus A \\ \deg p > 1}} g(p).\]
Recall from Lemma \ref{lem.1} that \[C \ = \  \sum_{\substack{p \in \mathcal{I}_q\setminus A \\ \deg p > 1}} g(p)  \ \leq \  \sum_{\substack{p \in \mathcal{I}_q \\ \deg p > 1}} g(p) \ = \  \left(1 - \frac{1}{q}\right)^q.\]  Thus $C$ is independent of $\alpha$ and bounded between $0$ and $\frac{1}{e}$. Hence, we can treat $C$ as a constant and prove the claim for all possible values of $C$. Taking the second derivative of 
\[(1-\alpha) + C\alpha e^\gamma+ e^\gamma \left(1- \frac{1}{q}\right)^q \left( \left(1- \frac{1}{q} \right)^{-\alpha q} -\alpha  - 1 \right),\]
the upper bound in \eqref{eq:lembd}, with respect to $\alpha$ gives 
\[e^\gamma \left(1- \frac{1}{q} \right)^q \log \left( \left( 1 - \frac{1}{q}\right)^{-q} \right)^2 \left(1 - \frac{1}{q} \right)^{-\alpha q}.\]
This quantity is always positive, so our upper bound is maximized at an endpoint of $[0,1]$. When $\alpha = 0$, it equals 1, and when $\alpha = 1$, it equals
\[ Ce^\gamma + e^\gamma \left(1- \frac{1}{q}\right)^q\left( \left(1- \frac{1}{q} \right)^{-q} - 2 \right),\]
so $\sum_{\substack{p \in \mathcal{I}_q, \deg p = 1}} \f(A'_p)$ is bounded above by the greater of these two values.
\end{proof}

By adding the upper bounds for $\f(A'_p)$ with $\deg p > 1$ to the upper bound obtained in Proposition \ref{prop.1}, we arrive at a final upper bound for our Erd\H{o}s sum when $q \geq 3$.

\begin{theorem} \label{thm:univbd}
For $3\leq q \leq 19$ we have $\f(A)\leq e^\gamma=1.78107\ldots$ and for $q>19$ we have
\[\f(A)   \ \leq \  1 + e^\gamma \left(1-\frac{1}{q}\right)^q + \sum_{\substack{p \in \mathcal{I}_q \\ \deg p > 1}}\frac{1}{||p||(\deg p +1)(\deg p )}  \ < \  1+e^{\gamma-1}+\frac{\pi^2-9}{6} \ = \  1.80015\ldots.\]
\end{theorem}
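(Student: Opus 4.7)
I would split $\f(A) = \sum_{\deg p = 1}\f(A_p') + \sum_{\deg p > 1}\f(A_p')$ and bound each piece. For the degree $>1$ piece, Proposition \ref{prop.g} gives $\f(A_p') < e^\gamma g(p)$ when $p \notin A$, while for $p \in A$ primitivity forces $A_p' = \{p\}$, so $\f(A_p') = \frac{1}{||p||\deg p}$. In the latter case, writing $\frac{1}{\deg p} = \frac{1}{\deg p+1} + \frac{1}{\deg p(\deg p + 1)}$ and applying Proposition \ref{prop.mertenslb} to the first summand (which gives $\frac{1}{||p||(\deg p+1)} \leq e^\gamma g(p)$ since $\deg p \geq 2$) yields $\f(A_p') \leq e^\gamma g(p) + \frac{1}{||p||\deg p(\deg p+1)}$. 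Combined with the identity $\sum_{\deg p > 1}g(p) = (1-1/q)^q$ derived in the proof of Lemma \ref{lem.1}, summation yields
\[\sum_{\substack{p \in \mathcal{I}_q \\ \deg p > 1}}\f(A_p') \ \leq \ e^\gamma \left(1-\frac{1}{q}\right)^q + T, \qquad T \coloneqq \sum_{\substack{p \in A \\ \deg p > 1}}\frac{1}{||p||\deg p(\deg p + 1)}.\]

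I would then invoke Proposition \ref{prop.1} to bound the degree one piece by $\max\{1, e^\gamma C + e^\gamma(1 - 2(1-1/q)^q)\}$ with $C \coloneqq \sum_{p \notin A, \deg p > 1}g(p) \in [0, (1-1/q)^q]$, and branch on which term is active. When $1$ dominates, I obtain $\f(A) \leq 1 + e^\gamma(1-1/q)^q + T$, and bounding $T$ trivially by extending the sum over $A$ to all irreducibles of degree $>1$ produces the explicit expression in the theorem. When the other term dominates, I use $\deg p(\deg p+1) \geq 2(\deg p + 1)$ (valid since $\deg p \geq 2$) together with Proposition \ref{prop.mertenslb} a second time to get $T \leq \frac{e^\gamma}{2}((1-1/q)^q - C)$; substituting and simplifying, the $C$-dependence cancels to give $\f(A) \leq e^\gamma + \frac{e^\gamma}{2}(C - (1-1/q)^q) \leq e^\gamma$.

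Combining the two cases,
\[\f(A) \ \leq \ \max\Bigl\{e^\gamma,\ 1 + e^\gamma(1-1/q)^q + \sum_{\deg p > 1}\frac{1}{||p||\deg p(\deg p + 1)}\Bigr\}.\]
For $3 \leq q \leq 19$, I would verify computationally that the second expression does not exceed $e^\gamma$, using Gauss's formula for $\pi_q'(n)$ up to a suitable cutoff and the tail estimate $\frac{\pi_q'(n)}{q^n n(n+1)} \leq \frac{1}{n^2(n+1)}$. For $q > 19$, the monotonicity $(1-1/q)^q < 1/e$ together with the partial-fraction computation $\sum_{n=2}^\infty \frac{1}{n^2(n+1)} = \frac{\pi^2}{6} - \frac{3}{2} = \frac{\pi^2-9}{6}$ yield the uniform estimate $\f(A) < 1 + e^{\gamma - 1} + \frac{\pi^2 - 9}{6}$.

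The main obstacle I anticipate is the algebraic cancellation in case 2: the bound $T \leq \frac{e^\gamma}{2}((1-1/q)^q - C)$ must interlock with the Proposition \ref{prop.1} expression so that the $C$-dependence drops out and what remains collapses to exactly $e^\gamma$. This relies on using Proposition \ref{prop.mertenslb} twice with precisely the coefficient $\tfrac{1}{2}$, which depends in turn on $\deg p \geq 2$. The numerical verification for $3 \leq q \leq 19$ is a routine finite check but becomes tight near $q = 19$ and thus demands a careful tail estimate.
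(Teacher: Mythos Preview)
Your proposal is correct and follows essentially the same approach as the paper: the same decomposition into degree-one and higher-degree pieces, the same appeal to Propositions~\ref{prop.g}, \ref{prop.mertenslb}, and \ref{prop.1}, the same identity $\sum_{\deg p>1}g(p)=(1-1/q)^q$, and the same case split on which term of the maximum in Proposition~\ref{prop.1} is active. The only organizational difference is that you apply the splitting $\tfrac{1}{\deg p}=\tfrac{1}{\deg p+1}+\tfrac{1}{\deg p(\deg p+1)}$ uniformly at the outset and then bound the residual $T$ differently in the two cases, whereas the paper uses the coarser bound $\f(p)<2e^\gamma g(p)$ directly in the first case; the algebra is equivalent.
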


\begin{proof}
We can bound $\f(A)$ by summing over disjoint subsets of $A$, just as we did in the previous proposition:
\begin{align}
    \f(A) \ &= \ \sum_{\substack{p \in \mathcal{I}_q \\ \deg p = 1}} \f(A'_p) + \sum_{\substack{p \in \mathcal{I}_q\setminus A \\ \deg p > 1}} \f(A'_p) + \sum_{\substack{p \in \mathcal{I}_q\cap A \\ \deg p > 1}} \f(p). \nonumber \\
    & < \ \sum_{\substack{p \in \mathcal{I}_q \\ \deg p = 1}} \f(A'_p) + e^\gamma \sum_{\substack{p \in \mathcal{I}_q\setminus A \\ \deg p > 1}} g(p) + \sum_{\substack{p \in \mathcal{I}_q\cap A \\ \deg p > 1}} \f(p). \label{eq:eulerbound}
\end{align}
Here, we have used Proposition \ref{prop.g} for the second term.  If the first sum in the bound is greater than 1, then we can bound it using Proposition \ref{prop.1} as
\[\sum_{\substack{p \in \mathcal{I}_q \\ \deg p = 1}} \f(A'_p)  \ \leq \  e^\gamma \sum_{\substack{p \in \mathcal{I}_q\setminus A \\ \deg p > 1}} g(p) + e^\gamma \left( 1 - 2 \left(1 - \frac{1}{q} \right)^q \right).\]
We then bound the third sum using Proposition \ref{prop.mertenslb} as  
\begin{align*}
    \sum_{\substack{p \in \mathcal{I}_q\cap A \\ \deg p > 1}} \f(p) \ = \  \sum_{\substack{p \in \mathcal{I}_q\cap A \\ \deg p > 1}} \frac{1}{||p||\deg p} \ &\leq \  \sum_{\substack{p \in \mathcal{I}_q\cap A \\ \deg p > 1}}\frac{1}{||p||(\deg p+1)}\\
& < \ 2 \sum_{\substack{p \in \mathcal{I}_q\cap A \\ \deg p > 1}} \frac{e^\gamma}{||p||} \prod_{\substack{f \in \mathcal{I}_q \\ \deg f \leq \deg p}} \left( 1- \frac{1}{||f||} \right) \\
&< \ 2 e^\gamma \sum_{\substack{p \in \mathcal{I}_q\cap A \\ \deg p > 1}} g(p) .
\end{align*}
Now, inserting both of these estimates and combining all three sums of \eqref{eq:eulerbound},
\begin{align*}
    \f(A) \ & < \ e^\gamma \left(1 - 2 \left(1 - \frac{1}{q} \right)^q \right) + 2e^\gamma \sum_{\substack{p \in \mathcal{I}_q \\ \deg p > 1}} g(p) \\
    &\leq \ e^\gamma \left(1 - 2 \left(1 - \frac{1}{q} \right)^q \right) + 2e^\gamma \left(1 - \frac{1}{q} \right)^q \ = \  e^\gamma .
\end{align*}

If instead, the first sum of \eqref{eq:eulerbound} is at most 1, then we treat the third sum in that expression more delicately.  Since $\deg p > 1$, we can bound the terms of this last sum with Proposition \ref{prop.mertenslb}:
\begin{align*}\f(p) \ = \  \frac{1}{||p||\deg p} \ &\leq \ \frac{1}{||p||(\deg p+1)}\cdot\frac{\deg p +1}{\deg p} \\
&< \ \frac{e^\gamma}{||p||} \prod_{\substack{f \in \mathcal{I}_q \\ \deg f \leq \deg p}} \left( 1- \frac{1}{||f||} \right)+\frac{1}{||p||(\deg p +1)(\deg p )} \\
&< \ e^\gamma g(p) +\frac{1}{||p||(\deg p +1)(\deg p )}.
\end{align*}
Inserting this bound in the third sum of \eqref{eq:eulerbound}, combining it with the second sum, and applying Proposition \ref{prop.1} to the first sum, we have \begin{align}
    \f(A)  &< \ 1 + e^\gamma \sum_{\substack{p \in \mathcal{I}_q \\ \deg p > 1}} g(p) + \sum_{\substack{p \in \mathcal{I}_q \\ \deg p > 1}}\frac{1}{||p||(\deg p +1)(\deg p )}\nonumber \\
    &\leq \ 1 + e^\gamma \left(1-\frac{1}{q}\right)^q + \sum_{\substack{p \in \mathcal{I}_q \\ \deg p > 1}}\frac{1}{||p||(\deg p +1)(\deg p )}. \label{eq:fAless1bd}
\end{align} 
By numerically computing the sum over irreducible polynomials, we find that this bound is less than $e^\gamma$ for all $q\leq 19$. To obtain a bound independent of $q$, we can bound the third sum above as \
\begin{align*}
    \sum_{\substack{p \in \mathcal{I}_q \\ \deg p > 1}}\frac{1}{||p||(\deg p +1)(\deg p )}\ = \ \sum_{n=2}^\infty \frac{\pi'_q(n)}{n(n+1)q^n} \ < \ \sum_{n=2}^\infty \frac{1}{n^2(n+1)} \ = \  \frac{\pi^2-9}{6}
\end{align*}
and note that $\left(1-\frac{1}{q}\right)^q$ increases with $q$ to $\frac{1}{e}$, which gives  \[\f(A)  \ < \  1+e^{\gamma-1}+\frac{\pi^2-9}{6} \ = \  1.800153\ldots. \qedhere\]

\end{proof}

Our proof above fails when $q = 2$ because the inequality \eqref{eq:degreebd} in Lemma \ref{lem.1} no longer holds. However, it becomes true if we introduce a correction factor of $2/e^{\ga} = 1.122918\ldots$:
\[\frac{1}{||t||\deg t}  \ \leq \  \frac{1}{2||t||} \ = \ \frac{2}{e^\gamma}\cdot \frac{e^\gamma}{||t||}\left(1 - \frac{1}{2} \right)^2.\]
By modifying the propositions above to take this correction factor into account, we can obtain an upper bound for the Erd\H{o}s sum in the case $q = 2$. 

\begin{theorem}
When $q = 2$, 
\[ \f(A)  \ < \  1+\frac{e^\ga}{2} \ = \  1.890536\ldots .\]
\end{theorem}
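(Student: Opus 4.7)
The plan is to mimic the proof of Theorem~\ref{thm:univbd}, inserting the correction factor $2/e^\gamma$ wherever inequality~\eqref{eq:degreebd} was applied. The pivotal replacement
\[\frac{1}{||t||\deg t} \ \leq \ \frac{1}{2||t||} \ = \ \frac{2}{e^\gamma}\cdot\frac{e^\gamma}{||t||}\left(1-\tfrac{1}{2}\right)^2\]
remains valid for $\deg t > 1$ when $q = 2$. Using this in place of~\eqref{eq:degreebd}, I would re-derive the analogue of Lemma~\ref{lem.1} by bounding the fourth summand of that lemma's decomposition by $\sum 1/(2||t||)$. Setting $C \coloneqq \sum_{p \notin A,\, \deg p > 1} g(p)$, the bound $C \leq (1-1/2)^2 = 1/4$ forces $e^\gamma C \leq e^\gamma/4 < 1/2$, which means the third summand (coming from Proposition~\ref{prop.t}) admits the same per-$t$ estimate $1/(2||t||)$. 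The Euler product over $t$ with $\mathcal{P}(t) \subset K$ and $\deg t > 1$ evaluates to $4^\alpha - 1 - \alpha$, where $\alpha$ is the proportion of degree~$1$ irreducibles absent from $A$, giving
\[\sum_{\substack{p \in \mathcal{I}_q \\ \deg p = 1}} \f(A'_p) \ < \ (1-\alpha) + \alpha e^\gamma C + \tfrac{1}{2}\bigl(4^\alpha - 1 - \alpha\bigr).\]

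Next I would decompose $\f(A)$ into sums over $\deg p = 1$, over $p \notin A$ with $\deg p > 1$, and over $p \in A$ with $\deg p > 1$, exactly as in the proof of Theorem~\ref{thm:univbd}. Proposition~\ref{prop.g} applies without modification for $\deg p > 1$, so the middle sum is at most $e^\gamma C$. For the last sum I would establish the estimate $\f(p) < 2 e^\gamma g(p)$ for every irreducible $p$ of degree at least $2$: Proposition~\ref{prop.mertenslb} yields $g(p) > 1/\bigl(e^\gamma(\deg p + 1)||p||\bigr)$, and the factor $(\deg p + 1)/\deg p \leq 2$ takes care of the rest. Summing over $p \in A$ with $\deg p > 1$ bounds this piece by $2 e^\gamma (1/4 - C)$.

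Adding the three bounds, the $C$-dependent contributions telescope to $(\alpha - 1) e^\gamma C$, which is non-positive on $\alpha \in [0,1]$, so the estimate is sharpest at $C = 0$:
\[\f(A) \ < \ \tfrac{1}{2}\bigl(1 - 3\alpha + 4^\alpha\bigr) + \tfrac{e^\gamma}{2}.\]
A short convexity check on $h(\alpha) = 1 - 3\alpha + 4^\alpha$ (convex with $h(0) = h(1) = 2$) gives $h \leq 2$ throughout $[0,1]$, which yields $\f(A) < 1 + e^\gamma/2$ as desired.

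The main obstacle is the modification of Lemma~\ref{lem.1}: once the correction factor is in play, the third and fourth summands no longer share a clean common coefficient, and some care is needed to absorb both into one estimate. The key is the bound $e^\gamma C \leq 1/2$, which is precisely what lets us replace $\sum e^\gamma C/||t||$ with the larger $\sum 1/(2||t||)$ without loss. A secondary subtlety is that the sign of the $C$-coefficient in the final expression could a priori go either way; the cancellation $(\alpha - 1)e^\gamma C \leq 0$ is what reduces the problem to a single-variable convex optimization in $\alpha$, which is then routine.
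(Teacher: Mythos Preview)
Your argument is correct and shares the paper's key idea---inserting the $2/e^\gamma$ correction factor into the proof of Lemma~\ref{lem.1}---and in fact arrives at the identical modified bound $(1-\alpha)+\alpha e^\gamma C+\tfrac12(4^\alpha-1-\alpha)$ for the degree-$1$ contribution. The difference lies only in how the endgame is organized. The paper first applies convexity in $\alpha$ to the degree-$1$ bound alone, obtaining the analogue of Proposition~\ref{prop.1} as $\max\{1,\,e^\gamma C+1\}$, and then splits into two cases: when the degree-$1$ sum is at most $1$ it reuses the second branch of Theorem~\ref{thm:univbd} to get the stronger conclusion $\f(A)<e^\gamma$, and otherwise it plugs $e^\gamma C+1$ into~\eqref{eq:eulerbound} to get $1+e^\gamma/2$ exactly. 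You instead assemble all three pieces of~\eqref{eq:eulerbound} into a single expression in $(\alpha,C)$, observe that the $C$-coefficient $(\alpha-1)e^\gamma$ is nonpositive so $C=0$ is worst, and only then invoke convexity in $\alpha$. Your route is a bit more streamlined (no case split), while the paper's has the side benefit of actually establishing $\f(A)<e^\gamma$ in one of the two cases.
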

\begin{proof}
Because we are introducing a factor of $2/e^\ga$ in the right hand side of 
\[\frac{1}{||t||\deg t}  \ \leq \  \frac{e^\gamma}{||t||}\left(1 - \frac{1}{q} \right)^q,\]
our upper bound from Lemma \ref{lem.1} becomes 
\[\sum_{\substack{p \in \mathcal{I}_q \\ \deg p = 1}} \f(A'_p)  \ \leq \  (1-\alpha) +\alpha e^\gamma \sum_{\substack{p \in \mathcal{I}_q\setminus A \\ \deg p > 1}} g(p) + 2 \left(1- \frac{1}{q}\right)^q \left( \left(1- \frac{1}{q} \right)^{-\alpha q} -\alpha  - 1 \right).\]
The right hand expression is still convex as a function of $\alpha$ by the same reasoning as Proposition \ref{prop.1}, so 
\begin{equation} \sum_{\substack{p \in \mathcal{I}_2 \\ \deg p = 1}} \f(A'_p)  \ \leq \  \max\bigg\{ 1, e^\gamma \sum_{\substack{p \in \mathcal{I}_2\setminus A \\ \deg p > 1}} g(p) + 2 \left( 1 - 2 \left(1 - \frac{1}{2} \right)^2 \right)\bigg\}.\label{eq:2maxbd} \end{equation}
If $\sum_{\substack{p \in \mathcal{I}_q \\ \deg p = 1}} \f(A'_p) \leq 1$, then the same argument as in Theorem \ref{thm:univbd} when this sum is bounded by 1 applies, and we can bound
\[     \f(A)  \ \leq \  1 + e^\gamma \left(1-\frac{1}{2}\right)^2 + \sum_{\substack{p \in \mathcal{I}_2 \\ \deg p > 1}}\frac{1}{||p||(\deg p +1)(\deg p )}  \ < \  e^\gamma \]
as in \eqref{eq:fAless1bd}.
Otherwise, inserting the second bound in \eqref{eq:2maxbd} into \eqref{eq:eulerbound} and following as above yields
 \[\f(A)  \ < \  2 \left(1 - 2 \left(1 - \frac{1}{2} \right)^2 \right) + 2e^\gamma \left(1 - \frac{1}{2} \right)^2 \ = \  1 + \frac{e^\ga}{2}. \qedhere\]
\end{proof}
\section{The Banks-Martin Inequality}
Recall that the analogue of the Banks-Martin conjecture for $\F_q[x]$ states that 
\[\f(\mathcal{I}_{1,q})  \ > \  \f(\mathcal{I}_{2,q})  \ > \  \f(\mathcal{I}_{3,q}) \ldots, \]
where $\mathcal{I}_{k,q}$ represents the set of monic polynomials in $\mathbb{F}_q[x]$ with $k$ irreducible factors counted with multiplicity. Analogously to the observation of Lichtman \cite{lichtman}, we find that the conjecture is false for $q=2$, $3$, and $4$ by direct numerical computation in Section \ref{sec:numresults}.  However, in this section we will show that for each $k$, there exists $q_k$ such that the inequality holds up to $\f(\mathcal{I}_{k,q})$ for all $q \geq q_k$, and we will establish an upper bound on the size of $q_k$. 

\subsection{Bounds for $\pi'_{q,k}(n)$ and $\pi^*_k(n)$} 
Let $\pi'_k(n)$ denote the number of monic polynomials of degree $n$ in $\F_q[x]$ with $k$ irreducible divisors including multiplicity. Since 
\[\f(\mathcal{I}_{k,q}) \ = \ \sum_{a \in \mathcal{I}_{k,q}} \frac{1}{||a|| \deg a} \ = \  \sum_{n=1}^{\infty} \frac{\pi'_k(n)}{nq^n},\]
the growth of $\f(\mathcal{I}_{k,q})$ is determined by $\pi'_k(n)$. Similarly, if we let $\mathcal{I}^*_{k,q} = \{f \in \mathcal{I}_{k,q} : f \textnormal{ squarefree}\}$ and $\pi^*_k(n)$ denote the number of squarefree monics of degree $n$ with $k$ irreducible divisors, then the growth of $\f(\mathcal{I}^*_{k,q})$ is determined by $\pi^*_k(n)$. Thus we can obtain bounds for $\f(\mathcal{I}_{k,q})$ and $f(\mathcal{I}^*_{k,q})$ by bounding their respective counting functions $\pi'_k(n)$ and $\pi^*_k(n)$.

\ignore{\begin{lemma}\label{lem.piasymp}
\[k \pi^*_k(n)  \ \leq \  \sum_{\mathclap{\substack{p \in \I \\ \deg p \leq n}}} \pi^*_{k-1}(n-\deg p).\]
\end{lemma}
 
\begin{proof}
Let $P_{k,n}$  be the set of monic squarefree polynomials of degree $n$ with exactly $k$ irreducible factors. Consider the set of ordered pairs $A_{k,n} = \{(f, p) \in P_{k,n} \times \I \textnormal{ s.t. } p|f\}$. As each polynomial in $P$ has exactly $k$ irreducible monics that divide it, each of these irreducibles is distinct (because $f$ is squarefree), and all of these polynomials have degree less than $n$, we have that \[|A_{k,n}| \ = \  k|P_{k,n}| \ = \  k\pi^*_k(n).\]

On the other hand, each monic irreducible $p \in \I$ with degree at most $n-1$ has at most $\Pi_{k-1}(n-\deg p)$ polynomials $f \in P_{k-1,n-\deg p}$ which can multiply with it to get a polynomial in $P_{k,n}$, so
\[k\pi^*_k(n) \ = \  |A_{k,n}|  \ \leq \  \sum_{\mathclap{\substack{p \in \I \\ \deg p < n}}} \pi^*_{k}(n-\deg p). \]
\end{proof}
}

\begin{proposition}\label{prop.piasympgeneral}
\[\pi^*_k(n)  \ \leq \  \frac{1}{k!} \sum_{\substack{j_1, \ldots, j_k \\ j_1 + \ldots + j_k = n}} \pi'_q(j_1)\pi'_q(j_2)\ldots\pi'_q(j_k).\]
\end{proposition}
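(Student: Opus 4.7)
The plan is to prove this by a direct counting argument, interpreting both sides combinatorially as counting certain products of irreducibles.

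First I would interpret the right hand side. The sum $\sum_{j_1+\cdots+j_k=n}\pi'_q(j_1)\cdots\pi'_q(j_k)$ counts the number of ordered $k$-tuples $(p_1,\ldots,p_k)$ of monic irreducible polynomials in $\F_q[x]$ such that $\deg p_1+\cdots+\deg p_k=n$. Indeed, for each composition $(j_1,\ldots,j_k)$ of $n$, there are $\pi'_q(j_i)$ choices for the $i$-th irreducible $p_i$, and summing over all compositions gives all ordered tuples.

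Next I would relate this count to $\pi^*_k(n)$. Every squarefree monic $f\in\mathcal{I}^*_{k,q}$ of degree $n$ has a unique (up to ordering) factorization $f=p_1p_2\cdots p_k$ into $k$ \emph{distinct} monic irreducibles whose degrees sum to $n$. The map sending an ordered $k$-tuple $(p_1,\ldots,p_k)$ of monic irreducibles to its product $p_1\cdots p_k$ therefore hits every squarefree polynomial with $k$ irreducible factors, and the fiber over any such polynomial has exactly $k!$ elements (the permutations of its factors). The map also hits non-squarefree products and products of degrees summing to $n$ that we don't care about, so this gives the inequality
\[k!\,\pi^*_k(n)\ \leq\ \#\{(p_1,\ldots,p_k):p_i\in\I_q,\ \textstyle\sum\deg p_i=n\}\ =\ \sum_{\substack{j_1,\ldots,j_k\\ j_1+\cdots+j_k=n}}\pi'_q(j_1)\cdots\pi'_q(j_k),\]
from which dividing by $k!$ yields the claim.

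There is no real obstacle here; the argument is essentially just counting ordered tuples versus unordered squarefree factorizations. The only thing to be careful about is to ensure the tuples on the right need not have distinct entries (so the inequality genuinely goes in the stated direction), and that every squarefree $f$ of degree $n$ with $k$ irreducible factors contributes exactly $k!$ ordered tuples, not fewer, which uses squarefreeness in an essential way.
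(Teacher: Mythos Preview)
Your proof is correct and is essentially the same argument as the paper's: both count ordered $k$-tuples of monic irreducibles with degrees summing to $n$, observe that each squarefree polynomial with $k$ distinct irreducible factors arises from exactly $k!$ such tuples, and conclude the inequality by dividing by $k!$. The only difference is phrasing---you frame it via a product map and its fibers, while the paper states the overcount directly---but the content is identical.
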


\begin{proof}
\ignore{We proceed by induction on $k$. The base case $k = 1$ is immediate, and if we assume that the claim is true for $k = l$, then 
\[\pi^*_{l+1}(n)  \ \leq \  \frac{1}{l+1} \sum_{\mathclap{\substack{p \in \I \\ \deg p \leq n}}} \pi^*_{l}(n-\deg p) \ = \  \frac{1}{l+1} \sum_{j_{l+1} \ = \  1}^{n - 1} \pi'_q(j_{l+1}) \pi^*_{l}(n - j_{l+1}). \]
By Lemma \ref{lem.piasymp},
\[ \frac{1}{l+1} \sum_{j_{l+1} = 1}^{n - 1} \pi'_q(j_{l+1}) \pi^*_{l}(n - j_{l+1})  \ \leq \  \frac{1}{(l+1)!} \sum_{\substack{j_1, \ldots, \j_{l+1} \\ j_1 + \ldots + j_{l+1} = n}} \pi'_q(j_1)\pi'_q(j_2)\ldots\pi'_q(j_{l+1}).
\]
which completes the induction.}
 $\pi^*_k$ counts polynomials of the form $p_1p_2\ldots p_k$, where the $p_i$ are distinct irreducibles with degrees $j_i$ such that $j_1 + \ldots + j_k = n$. There are $\pi'_q(j_1)\pi'_q(j_2)\ldots \pi'_q(j_k)$ ways to choose $k$ irreducibles with respective degrees $j_1, \ldots j_k$. However, this product includes in its count some non-squarefree polynomials, and for the polynomials that are squarefree, there are $k!$ different ways we can order them to obtain the same product. Hence, summing over all tuples $j_1, \ldots j_k$ that sum to $n$ and dividing by $k!$ gives an upper bound for $\pi^*_k(n)$.
\end{proof}

\begin{proposition}\label{prop.pi_k bound}
\[\pi'_k(n)  \ \geq \  \frac{1}{k!} \sum_{\substack{j_1, \ldots, j_k \\ j_1 + \ldots + j_k = n}} \pi'_q(j_1)\pi'_q(j_2)\ldots\pi'_q(j_k).\]
\end{proposition}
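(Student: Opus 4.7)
The plan is to interpret the right-hand side combinatorially and compare it to $\pi'_k(n)$ by a multiplicity-based overcount argument, mirroring the proof of Proposition \ref{prop.piasympgeneral} but in reverse direction.

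First I would observe that the sum $\sum_{j_1+\ldots+j_k=n} \pi'_q(j_1)\pi'_q(j_2)\cdots\pi'_q(j_k)$ is exactly the number of ordered $k$-tuples $(p_1,\ldots,p_k)$ of monic irreducibles in $\F_q[x]$ with $\deg p_1 + \cdots + \deg p_k = n$, since $\pi'_q(j_i)$ counts the choices for $p_i$ at each fixed degree $j_i$. Each such ordered tuple produces a polynomial $f = p_1 p_2 \cdots p_k$ lying in $\mathcal{I}_{k,q}$ with $\deg f = n$.

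Next I would group these tuples by the resulting product $f$. If $f$ factors as $f = r_1^{a_1} r_2^{a_2} \cdots r_s^{a_s}$ with the $r_i$ distinct monic irreducibles and $a_1 + \cdots + a_s = k$, then the number of ordered $k$-tuples $(p_1,\ldots,p_k)$ whose product is $f$ equals the multinomial coefficient $\binom{k}{a_1, a_2, \ldots, a_s} = \frac{k!}{a_1! \, a_2! \cdots a_s!}$, which is at most $k!$, with equality precisely when $f$ is squarefree. Summing over all $f \in \mathcal{I}_{k,q}$ of degree $n$ gives
\[\sum_{\substack{j_1,\ldots,j_k \\ j_1+\cdots+j_k = n}} \pi'_q(j_1)\pi'_q(j_2)\cdots\pi'_q(j_k) \ = \ \sum_{\substack{f \in \mathcal{I}_{k,q} \\ \deg f = n}} \binom{k}{a_1(f),\ldots,a_s(f)} \ \leq \ k! \cdot \pi'_k(n),\]
and dividing by $k!$ yields the desired inequality.

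I do not expect any serious obstacle here; the argument is a direct multinomial counting. The only subtle point, worth flagging explicitly, is that the inequality is strict whenever some polynomial counted by $\pi'_k(n)$ fails to be squarefree, which is why this bound complements rather than coincides with the upper bound for $\pi^*_k(n)$ in Proposition \ref{prop.piasympgeneral}. Taken together, the two propositions show that $\pi^*_k(n) \leq \pi'_k(n)$ with the same explicit ordered-tuple expression sandwiched between them.
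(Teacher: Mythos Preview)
Your proof is correct and follows essentially the same approach as the paper: both argue that the right-hand sum counts ordered $k$-tuples of irreducibles with total degree $n$, and that each polynomial in $\mathcal{I}_{k,q}$ of degree $n$ arises from at most $k!$ such tuples, so dividing by $k!$ undercounts $\pi'_k(n)$. Your version is slightly more explicit in identifying the exact multiplicity as the multinomial coefficient $\binom{k}{a_1,\ldots,a_s}$, whereas the paper simply notes that repeated factors cause some reorderings to coincide; this is a refinement of presentation rather than a different method.
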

\begin{proof}
We can write each polynomial counted by $\pi'_k(n)$ as $p_1p_2\ldots p_k$, where the $p_i$ are not necessarily distnct irreducibles. There are $\pi'_q(j_1)\pi'_q(j_2)\ldots\pi'_q(j_k)$ ways to choose the irreducibles $p_1, \ldots, p_k$ with degrees $j_1, \ldots j_k$, and these irreducibles can be ordered in $k!$ ways. However, if not all irreducibles are distinct, then some reorderings will result in the same polynomial, meaning that dividing by $k!$ undercounts the total number of polynomials. Summing over tuples $j_1, \ldots j_k$ that sum to $n$ gives a lower bound for $\pi'_k(n)$.
\end{proof}

\subsection{An Upper Bound for $\f(\mathcal{I}_{k,q})$}
In order to obtain an upper bound for $\f(\mathcal{I}_{k,q})$, we will first bound $\f(\mathcal{I}^*_{k,q})$ using our bounds for $\pi_k^*(n)$.  We will use the following result of Mordell, stated in greater generality than is needed here, as it will be useful for the computations in Section \ref{sec:computation}.

\begin{theorem}[Mordell \cite{MR0100181}] \label{thm:mordell}
For any positive integer $k$ and $a>-k$ we have \begin{align}
    \sum_{1\leq n_1,n_2,\ldots, n_k} \frac{1}{n_1n_2\cdots n_k(n_1+n_2+ \cdots + n_k +a)} \ &= \ k!\left(1+\frac{1-a}{1!2^{k+1}} + \frac{(1-a)(2-a)}{2!3^{k+1}}+\cdots\right) \label{eq:mordellsum}    \\
    & = \ k! \sum_{i=0}^\infty \frac{\left(-1\right)^i}{(i+1)^{k+1}} \binom{a-1}{i}, \nonumber
\end{align}
where the sum ranges over all $k$-tuples of positive integers.
\end{theorem}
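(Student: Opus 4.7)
The plan is to convert the multiple sum into an integral via the standard identity $\frac{1}{m+a}=\int_0^1 x^{m+a-1}\,dx$, which is valid here because $n_1+\cdots+n_k\geq k$ and the hypothesis $a>-k$ ensures $m+a>0$. Substituting this into the sum and interchanging summation and integration yields
\[
\sum_{n_1,\ldots,n_k\geq 1}\frac{1}{n_1\cdots n_k(n_1+\cdots+n_k+a)}
\ = \ \int_0^1 x^{a-1}\prod_{i=1}^k\!\Biggl(\sum_{n\geq 1}\frac{x^n}{n}\Biggr)dx
\ = \ \int_0^1 x^{a-1}(-\log(1-x))^k\,dx,
\]
where the inner sum collapses to $-\log(1-x)$. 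For $a\geq 1$ the interchange is immediate by Tonelli since all terms are positive; for the range $-k<a<1$ one justifies it by dominated convergence using the integrability of $x^{a+k-1}$ near $0$ and the mild logarithmic singularity at $x=1$, or alternatively by analytic continuation in $a$.

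Next I would substitute $u=1-x$, turning the integral into $\int_0^1(1-u)^{a-1}(-\log u)^k\,du$, and expand the binomial $(1-u)^{a-1}=\sum_{i\geq 0}\binom{a-1}{i}(-u)^i$, which converges on $(0,1)$. Term-by-term integration reduces the problem to evaluating $\int_0^1 u^i(-\log u)^k\,du$, and the change of variables $v=(i+1)(-\log u)$ converts this to the Gamma integral $\frac{1}{(i+1)^{k+1}}\int_0^\infty v^k e^{-v}\,dv=\frac{k!}{(i+1)^{k+1}}$. Putting the pieces together gives
\[
\int_0^1(1-u)^{a-1}(-\log u)^k\,du
\ = \ k!\sum_{i=0}^\infty\frac{(-1)^i}{(i+1)^{k+1}}\binom{a-1}{i},
\]
which is exactly the closed-form expression in the second equality of the theorem. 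To recover the first form, I would rewrite $(-1)^i\binom{a-1}{i}=\frac{(1-a)(2-a)\cdots(i-a)}{i!}$ and read off the terms corresponding to $i=0,1,2,\ldots$.

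The main technical obstacle is the justification of the term-by-term integration of the binomial series, since the coefficients $\binom{a-1}{i}$ alternate in sign and do not give absolute convergence uniformly on $[0,1]$ when $a$ is negative. I would handle this by first proving the identity for $a>0$ (where everything in sight is positive and Tonelli/monotone convergence applies throughout), and then observing that both sides of the claimed equation are analytic functions of $a$ on the half-plane $\operatorname{Re}(a)>-k$: the left side by uniform convergence of the series on compacta, and the right side by the standard estimates on the binomial coefficients $\binom{a-1}{i}=O(i^{-a})$ as $i\to\infty$, which make the series $\sum_i (i+1)^{-(k+1)}\binom{a-1}{i}$ absolutely convergent whenever $k+1+\operatorname{Re}(a)>1$, i.e., precisely when $a>-k$. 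Identity theorem for analytic functions then extends the formula from $a>0$ to the full range $a>-k$.
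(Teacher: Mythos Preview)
The paper does not prove this theorem; it is quoted as a result of Mordell with a citation and used as a black box, so there is no ``paper's own proof'' to compare against. Your argument is essentially Mordell's original proof: write $\frac{1}{m+a}=\int_0^1 x^{m+a-1}\,dx$, collapse the product of logarithmic series, substitute $u=1-x$, expand the binomial, and integrate term by term using the Gamma integral. This is correct in outline and in almost all details.

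One small slip: you claim that for $a>0$ ``everything in sight is positive,'' which you use to justify the term-by-term integration of the binomial expansion by Tonelli. In fact the coefficients $(-1)^i\binom{a-1}{i}=\frac{(1-a)(2-a)\cdots(i-a)}{i!}$ are all nonnegative precisely when $a\leq 1$, not when $a>0$; for non-integer $a>1$ the signs vary. This does not damage your overall strategy, since establishing the identity on any nonempty open interval of $a$ (say $0<a<1$, where both interchanges are justified by positivity) already suffices for the analytic-continuation step you describe, and your growth estimate $\binom{a-1}{i}=O(i^{-a})$ correctly gives absolute convergence of the right-hand series throughout $\operatorname{Re}(a)>-k$. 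With that range corrected, the proof is complete.
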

\begin{remark}Note that when $a=0$, the expression on the right in \eqref{eq:mordellsum} is equal to $k!\zeta(k+1)$, where  $\zeta(s) = \sum_{n = 1}^{\infty} 1/n^s$ is the Riemann zeta function. When $a$ is a positive integer, the right hand sum is finite, so the result is a rational number.
\end{remark}

\begin{proposition}\label{prop.mordell}
\[\f(\mathcal{I}^*_{k,q})  \ \leq \  \zeta(k+1).\]
\end{proposition}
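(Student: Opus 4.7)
The plan is to combine the upper bound on $\pi^*_k(n)$ from Proposition \ref{prop.piasympgeneral} with the trivial upper bound $\pi'_q(j) \leq q^j/j$ from Proposition \ref{lower bound of pi'}, and then recognize the resulting multiple sum as an instance of Mordell's identity with $a=0$.

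First, I expand $\f(\mathcal{I}^*_{k,q}) = \sum_{n=1}^\infty \pi^*_k(n)/(nq^n)$ and apply Proposition \ref{prop.piasympgeneral} to replace $\pi^*_k(n)$ by $\frac{1}{k!}\sum_{j_1+\cdots+j_k = n} \pi'_q(j_1)\cdots\pi'_q(j_k)$. Next, I bound each factor $\pi'_q(j_i)$ above by $q^{j_i}/j_i$. The key observation is that in every term of the sum, the product $q^{j_1}\cdots q^{j_k}$ equals $q^n$, which cancels exactly with the $q^n$ appearing in the denominator. This leaves
\[\f(\mathcal{I}^*_{k,q}) \ \leq \ \frac{1}{k!} \sum_{n=1}^\infty \frac{1}{n} \sum_{\substack{j_1,\ldots,j_k \geq 1\\ j_1+\cdots+j_k = n}} \frac{1}{j_1 j_2 \cdots j_k}.\]

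After re-indexing by the tuple $(j_1,\ldots,j_k)$ instead of by $n$ (replacing the inner $1/n$ by $1/(j_1+\cdots+j_k)$), the double sum collapses into the single multiple sum
\[\frac{1}{k!} \sum_{j_1,\ldots,j_k \geq 1} \frac{1}{j_1 j_2 \cdots j_k (j_1 + j_2 + \cdots + j_k)}.\]
This is precisely Mordell's sum in Theorem \ref{thm:mordell} with $a=0$, which by the remark following that theorem equals $k!\,\zeta(k+1)$. Dividing by the leading $1/k!$ yields the claimed bound $\f(\mathcal{I}^*_{k,q}) \leq \zeta(k+1)$.

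There is no real obstacle: the proof is essentially a three-step chain (combinatorial bound, trivial bound on irreducible counts, Mordell). The only subtlety worth noting is that the cancellation of $q^n$ with $q^{j_1+\cdots+j_k}$ makes the final bound completely independent of $q$, which at first looks surprising but is natural given that $\zeta(k+1)$ is simply what one gets in the limit $q \to \infty$ (where the bound $\pi'_q(j) \leq q^j/j$ becomes essentially sharp, cf. Proposition \ref{prop.recipsum}). One should also verify that the interchange of summations in passing from summing over $n$ to summing over tuples is justified; this is immediate since all terms are nonnegative, so Tonelli's theorem applies.
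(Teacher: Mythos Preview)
Your proof is correct and follows essentially the same route as the paper: bound $\pi^*_k(n)$ via Proposition \ref{prop.piasympgeneral}, use $\pi'_q(j)\le q^j/j$ so that the $q$-powers cancel, and identify the remaining sum as the $a=0$ case of Mordell's identity (Theorem \ref{thm:mordell}) to obtain $\zeta(k+1)$. The only additions relative to the paper are your remarks on the $q$-independence and the Tonelli justification, which are fine but not essential.
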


\begin{proof}
By Proposition \ref{prop.piasympgeneral}, we have 
\[\pi^*_k(n)  \ \leq \  \frac{1}{k!} \sum_{\substack{j_1, \ldots, j_k \\ j_1 + \ldots + j_k = n}} \pi'_q(j_1)\pi'_q(j_2)...\pi'_q(j_k).\]
Recalling that $\pi_q'(n) \leq \frac{q^n}{n}$, we see that
\[\f(\mathcal{I}^*_{k,q}) \ = \  \sum_{n=1}^{\infty} \frac{\pi^*_k(n)}{nq^n} 
 \ \leq \  \frac{1}{k!} \sum_{n = 1}^{\infty} \sum_{\substack{j_1 \ldots j_k \\ j_1 + \ldots + j_k = n}} \frac{\pi'_q(j_1)...\pi'_q(j_k)}{nq^n}
 \ \leq \  \frac{1}{k!} \sum_{n = 1}^{\infty} \sum_{\substack{j_1 \ldots j_k \\ j_1 + \ldots + j_k = n}} \frac{1}{nj_1j_2 \ldots j_k}.\]
By Theorem \ref{thm:mordell}, this right hand sum equals $\zeta(k + 1)$, giving us the desired result. 
\end{proof}

\begin{proposition} \label{upper bound for f(I_k)}
For $k \geq 3$,
\[\f(\mathcal{I}_{k,q})  \ \leq \  \zeta(k+1) + \log \left(\frac{q}{q-1}\right)\zeta(k-1). \]
\end{proposition}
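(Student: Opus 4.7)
The plan is to split the sum into squarefree and non-squarefree contributions,
\[\f(\mathcal{I}_{k,q}) = \f(\mathcal{I}^*_{k,q}) + \f(\mathcal{I}_{k,q}\setminus\mathcal{I}^*_{k,q}),\]
handling the squarefree piece via Proposition \ref{prop.mordell}, which immediately yields $\f(\mathcal{I}^*_{k,q}) \leq \zeta(k+1)$ and accounts for the first summand of the claim. It then remains to bound the non-squarefree contribution by $\log(q/(q-1))\zeta(k-1)$.

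For this I would exploit that every non-squarefree $f \in \mathcal{I}_{k,q}$ admits at least one factorization $f = p^2 g$ with $p \in \mathcal{I}_q$ an irreducible satisfying $p^2 \mid f$ and $g = f/p^2 \in \mathcal{I}_{k-2,q}$. Because this decomposition need not be unique, summing over all valid pairs $(p,g)$ provides an upper bound, and since $k \geq 3$ ensures $\deg g \geq k-2 \geq 1$, the replacement $\deg(p^2 g) = 2\deg p + \deg g \geq \deg g$ is legitimate. The resulting double sum factors as
\[\f(\mathcal{I}_{k,q}) - \f(\mathcal{I}^*_{k,q}) \;\leq\; \bigg(\sum_{p \in \mathcal{I}_q}\frac{1}{\|p\|^2}\bigg)\bigg(\sum_{g \in \mathcal{I}_{k-2,q}}\frac{1}{\|g\|\deg g}\bigg).\]

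The first factor is bounded by $\log(q/(q-1))$ via the function-field Euler product for the zeta function $\zeta_q(2) = \prod_p (1-\|p\|^{-2})^{-1} = q/(q-1)$: taking logarithms gives $\log(q/(q-1)) = \sum_p\sum_{j \geq 1}(j\|p\|^{2j})^{-1}$, which dominates $\sum_p \|p\|^{-2}$.

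The main obstacle is bounding the second factor $\f(\mathcal{I}_{k-2,q})$ by $\zeta(k-1)$. For $k = 3$ this is immediate since $\mathcal{I}_{1,q}$ consists entirely of monic irreducibles and Proposition \ref{prop.mordell} applies. For $k \geq 4$, Proposition \ref{prop.mordell} only yields the weaker squarefree bound $\f(\mathcal{I}^*_{k-2,q}) \leq \zeta(k-1)$, so one must control the non-squarefree contribution to $\mathcal{I}_{k-2,q}$ separately. I would do this by applying Mordell's Theorem \ref{thm:mordell} with the positive shift $a = 2\deg p$ to the underlying ordered-tuple sum
\[\sum_{j_1,\ldots,j_{k-2}\geq 1}\frac{1}{j_1\cdots j_{k-2}(j_1+\cdots+j_{k-2}+2\deg p)};\]
the strict inequality $M(2\deg p, k-1) < \zeta(k-1)$ produced by the positive shift provides the slack needed to absorb the overcounting factor $(k-2)!$ that arises when bounding $\pi'_{k-2}(m)$ by ordered $(k-2)$-tuples of irreducibles, ultimately forcing the clean form of the bound claimed.
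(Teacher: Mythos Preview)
Your strategy is the paper's: split into squarefree and non-squarefree contributions, bound the first by $\zeta(k+1)$ via Proposition~\ref{prop.mordell}, and write each non-squarefree $f$ as $p^2g$ with $g\in\mathcal{I}_{k-2,q}$. The paper differs only cosmetically, expanding $g$ as an ordered tuple $(p_2,\ldots,p_{k-1})$ of irreducibles and then dropping $j_1$ from the additive denominator before invoking Mordell on the remaining $k-2$ variables to obtain $\log\!\bigl(\tfrac{q}{q-1}\bigr)\zeta(k-1)$.

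The gap is in your last paragraph. Theorem~\ref{thm:mordell} with $k-2$ variables and shift $a\ge 0$ yields
\[
\sum_{j_1,\ldots,j_{k-2}\ge 1}\frac{1}{j_1\cdots j_{k-2}(j_1+\cdots+j_{k-2}+a)}=(k-2)!\sum_{i\ge 0}\frac{(-1)^i}{(i+1)^{k-1}}\binom{a-1}{i};
\]
the factorial is part of the identity and is not removed by taking $a>0$. Concretely, for $k=4$ and $a=2$ this sum equals $2(1-\tfrac{1}{8})=\tfrac{7}{4}>\zeta(3)$, so the asserted inequality ``$M(2\deg p,k-1)<\zeta(k-1)$'' is simply false, and no shift-induced slack can absorb the $(k-2)!$ overcount arising from ordered tuples. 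Hence your argument does not reduce the non-squarefree bound to $\log\!\bigl(\tfrac{q}{q-1}\bigr)\zeta(k-1)$; what you actually need, $\f(\mathcal{I}_{k-2,q})\le\zeta(k-1)$, is not furnished by any result in the paper for $k\ge 4$. It is worth noting that the paper's own proof appears to share this difficulty: it applies Theorem~\ref{thm:mordell} to the $(k-2)$-variable sum $\sum_{j_2,\ldots,j_{k-1}}\frac{1}{j_2\cdots j_{k-1}(j_2+\cdots+j_{k-1})}$ and records the value $\zeta(k-1)$, whereas the theorem as stated gives $(k-2)!\,\zeta(k-1)$.
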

\begin{proof}
Proposition \ref{prop.mordell} gives us an upper bound for the Erd\H{o}s sum over squarefree elements of $\mathcal{I}_{k,q}$; all that remains is to consider the contribution from the non-squarefree terms, which are polynomials of the form $p_1^2p_2\ldots p_{k-1}$, where the $p_i$ are not necessarily distinct irreducibles. 
If we let $j_i = \deg p_i$, then there are at most $\pi'_q(j_1)\pi'_q(j_2)\ldots\pi'_q(j_{k-1})$ polynomials $p_1^2p_2\ldots p_{k-1}$ whose factors have the corresponding degrees $j_1, \ldots j_{k-1}$. Hence our sum over non-squarefree terms is bounded above by

\begin{align*}
\sum_{j_1 \ldots j_{k-1}} \frac{\pi'_q(j_1)\ldots \pi'_q(j_{k-1})}{(2j_1 + j_2+ \ldots + j_{k-1})q^{2j_1 + j_2 +  \ldots + j_{k-1}}}
\ &\leq \ \sum_{j_1 \ldots j_{k-1}} \frac{1}{j_1j_2\ldots j_{k-1}(j_1 + \ldots + j_{k-1})q^{j_1}}\\
\ &\leq \ \sum_{j_1}\frac{1}{j_1q^{j_1}}\sum_{j_2 \ldots j_{k-1}} \frac{1}{j_2\ldots j_{k-1}(j_2 + \ldots + j_{k-1})}\\
&= \ \log \left(\frac{q}{q-1}\right)\zeta(k-1),
\end{align*}
where once again we have used Theorem \ref{thm:mordell} to obtain the last equality. Note that the second line requires $k \geq 3$. Combining our bounds for the squarefree and non-squarefree elements in $\mathcal{I}_{k,q}$ yields a total upper bound of
\[ \zeta(k+1) + \log \left(\frac{q}{q-1}\right)\zeta(k-1). \vspace{-1em}\]
\end{proof}

Since $\log(\frac{q}{q-1})$ decreases to zero as $q$ tends to infinity, this upper bound gets arbitrarily close to $\zeta(k+1)$ whenever $\zeta(k-1)$ converges. However, we will need a separate bound for $k = 2$.

\begin{proposition} \label{upper bound for f(I_2)}
\[\f(\mathcal{I}_{2,q})  \ \leq \  \zeta(3) + \frac{1}{2}\textnormal{Li}_2\left(\frac{1}{q}\right),\]
where $\textnormal{Li}_2(x)=\sum_{k=1}^{\infty}\frac{x^k}{k^2}$ is the dilogarithm function.
\end{proposition}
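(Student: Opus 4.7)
The plan is to mirror the strategy of Proposition \ref{upper bound for f(I_k)}, splitting $\mathcal{I}_{2,q}$ into its squarefree and non-squarefree parts and bounding each contribution separately.

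First, I would handle the squarefree portion. Since $\mathcal{I}^*_{2,q} \subseteq \mathcal{I}_{2,q}$, Proposition \ref{prop.mordell} applied with $k=2$ immediately gives $\mathcal{F}(\mathcal{I}^*_{2,q}) \leq \zeta(3)$, accounting for the first term in the claimed bound.

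Next, I would treat the non-squarefree elements, which with exactly $k=2$ irreducible factors (counted with multiplicity) must be of the form $p^2$ for some $p \in \mathcal{I}_q$. Grouping these by degree and using $\pi'_q(n) \leq q^n/n$, I get
\[\sum_{p \in \mathcal{I}_q} \frac{1}{\|p^2\| \deg p^2} \ = \ \frac{1}{2}\sum_{n=1}^{\infty} \frac{\pi'_q(n)}{n q^{2n}} \ \leq \ \frac{1}{2}\sum_{n=1}^{\infty} \frac{1}{n^2 q^n} \ = \ \frac{1}{2}\textnormal{Li}_2\!\left(\frac{1}{q}\right).\]
Adding the two contributions yields the stated bound. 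Unlike the $k\geq 3$ case, there is no need to invoke Mordell's identity here because the non-squarefree contribution involves only a single irreducible and therefore reduces to a one-variable series; this is precisely why the bound takes the cleaner form $\frac{1}{2}\textnormal{Li}_2(1/q)$ instead of an expression involving $\zeta(k-1)$, which would diverge at $k=2$. There is no real obstacle in this argument — the only thing to verify carefully is the enumeration of non-squarefree elements of $\mathcal{I}_{2,q}$, which is straightforward since having exactly two prime factors counted with multiplicity and being non-squarefree forces the two factors to coincide.
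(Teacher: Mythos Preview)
Your proof is correct and follows essentially the same approach as the paper: split $\mathcal{I}_{2,q}$ into squarefree and non-squarefree parts, bound the squarefree contribution by $\zeta(3)$ via Proposition~\ref{prop.mordell}, and observe that the non-squarefree elements are exactly the squares $p^2$ of irreducibles, whose contribution is bounded by $\tfrac12\mathrm{Li}_2(1/q)$ using $\pi'_q(n)\le q^n/n$. The paper's argument is identical in structure and detail.
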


\begin{proof}
The only elements in $\mathcal{I}_{2,q}$ that are not squarefree are the squares of irreducibles, which contribute 
\[ \sum_{f \in \mathcal{I}_q} \frac{1}{||f^2|| \deg (f^2)} \ = \  \sum_{n = 1}^{\infty} \frac{\pi'_q(n)}{2nq^{2n}}  \ \leq \ 
\sum_{n = 1}^{\infty} \frac{1}{2n^2q^n} \ = \   \frac{1}{2}\textnormal{Li}_2\left(\frac{1}{q}\right)\]
to the upper bound. Since the Erd\H{o}s sum over the squarefrees is bounded above by $\zeta(3)$ by Proposition \ref{prop.mordell},
\[\f(\mathcal{I}_{2,q}) \ = \  \f(\I^*_{2,q}) + f(\I_{2,q} \setminus \I^*_{2,q})  \ \leq \  \zeta(3) + \frac{1}{2}\textnormal{Li}_2\left(\frac{1}{q}\right). \vspace{-1em}\]
\end{proof}

\ignore{
\subsection{A Lower Bound for $\f(\mathcal{I}_{k,q})$}
Having established an upper bound for $\f(\mathcal{I}_{k,q})$, we now find a lower bound for the same sum.

\begin{proposition}\label{lower bound for f(I_k)}
$$ \f(\mathcal{I}_{k,q})  \ \geq \  \zeta(k+1) -  \frac{1}{2}\zeta(k+1)\Big( \Big(1 + \frac{\sqrt{q}}{q-1} \Big)^k - \Big(1 - \frac{\sqrt{q}}{q-1} \Big)^k\Big).$$
\end{proposition}

\begin{proof}
By Proposition \ref{prop.pi_k bound}, we have
$$\f(\mathcal{I}_{k,q})  \ \geq \  \frac{1}{k!} \sum_{n=1}^{\infty} \sum_{j_1 + ... + j_k = n} \frac{\pi'_q(j_1)...\pi'_q(j_k)}{n q^n}.$$
We can expand this out using the lower bound from Proposition \ref{lower bound of pi'}:
$$\f(\mathcal{I}_{k,q})  \ \geq \  \frac{1}{k!} \sum_{n=1}^{\infty} \sum_{j_1 + ... + j_k = n} \frac{(q^{j_1} - \frac{q}{q-1}\cdot q^{j_1/2}) ...(q^{j_k} - \frac{q}{q-1} \cdot q^{j_k/2} )}{n j_1 ... j_k q^n}.$$
If we let 
$$T_i \ = \  (-1)^i \frac{q^{n+i}}{(q-1)^i} \sum_{1 \leq a_1 < ... < a_i \leq k} \frac{1}{q^{(j_{a_1} + \ldots + j_{a_i})/2}},$$
be the sum of products in the numerator that use $i$ of the $\frac{q}{q-1} q^{j/2}$ terms and $(k-i)$ of the $q^j$ terms, then \vspace{-0.5em}
$$\f(\mathcal{I}_{k,q})  \ \geq \  \frac{1}{k!} \sum_{n=1}^{\infty} \sum_{j_1 + ... + j_k = n}\sum_{i=0}^{k}  \frac{T_i}{n j_1 ... j_k q^n} \ = \  \zeta(k+1) + \frac{1}{k!} \sum_{n=1}^{\infty} \sum_{j_1 + ... + j_k = n} \sum_{i=1}^{k}\frac{ T_i}{n j_1 ... j_k q^n}.$$
In order to obtain a lower bound, we will find an upper bound for the terms subtracted from $\zeta(k+1)$, which correspond with odd indices $i$. 
Since $j_{a_1} + ... + j_{a_i} \geq i$, we have
$$ \sum_{1 \leq a_1 < ... < a_i \leq k} \frac{1}{q^{(j_{a_1} + ... + j_{a_i})/2}}  \ \leq \  \binom{k}{i} \frac{1}{q^{i/2}}.$$
For the terms subtracted from $\zeta(k+1)$, this gives us a total upper bound of
$$\frac{1}{k!} \sum_{n=1}^{\infty} \sum_{j_1 + ... + j_k = n} \frac{1}{n j_1 ... j_k}\sum_{i \leq k \textnormal{ odd}} {k\choose i} \left(\frac{\sqrt{q}}{q-1}\right)^i \ = \  \zeta(k+1)\sum_{i \leq k \textnormal{ odd}} {k\choose i} \left(\frac{\sqrt{q}}{q-1}\right)^i.$$
We can write the remaining sum in closed form using an identity derived from the binomial theorem: 
 \[\sum_{i \leq k \textnormal{ odd}} {k\choose i} \left(\frac{\sqrt{q}}{q-1}\right)^i \ = \  \frac{1}{2}\left(\Big( \frac{\sqrt{q}}{q-1} + 1 \Big)^k + (-1)^{k+1} \Big( \frac{\sqrt{q}}{q-1} - 1 \Big)\right) ^k.\]
Thus our final upper bound on the part to be subtracted is
$$\zeta(k+1) \frac{1}{2}\Big( \Big( \frac{\sqrt{q}}{q-1} + 1 \Big)^k + (-1)^{k+1} \Big( \frac{\sqrt{q}}{q-1} - 1 \Big) ^k \Big). $$
Substituting this into our original expression gives us a lower bound of
$$ \f(\mathcal{I}_{k,q})  \ \geq \  \zeta(k+1) -  \frac{1}{2}\zeta(k+1)\Big( \Big(1 + \frac{\sqrt{q}}{q-1} \Big)^k - \Big(1 - \frac{\sqrt{q}}{q-1} \Big)^k\Big). \vspace{-1em}$$
\end{proof}
}

\subsection{A Lower Bound for $\f(\mathcal{I}_{k,q})$}
Having established an upper bound for $\f(\mathcal{I}_{k,q})$, we now find a lower bound for the same sum.

\begin{proposition}\label{lower bound for f(I_k)}
$$ \f(\mathcal{I}_{k,q})  \ \geq \  
\bigg(1-\frac{\sqrt{q}}{q-1}\bigg)^k \ze(k+1).$$
\end{proposition}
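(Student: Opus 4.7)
The plan is to mirror the upper-bound argument from Proposition~\ref{prop.mordell}, but swapping in the \emph{lower} bound for $\pi'_k(n)$ from Proposition~\ref{prop.pi_k bound} together with the \emph{lower} bound for $\pi'_q(j)$ from Proposition~\ref{lower bound of pi'}, and then invoking Theorem~\ref{thm:mordell} to evaluate the resulting multiple series in closed form.

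The preliminary step is to extract a single $j$-independent multiplicative factor from Proposition~\ref{lower bound of pi'}, which we may write as
$$\pi'_q(j) \ \geq \ \frac{q^j}{j}\left(1 - \frac{q^{1-j/2}}{q-1}\right).$$
Since $q^{1-j/2}$ is decreasing in $j$ and equals $\sqrt{q}$ at $j=1$, the weaker but uniform bound
$$\pi'_q(j) \ \geq \ \frac{q^j}{j}\left(1 - \frac{\sqrt{q}}{q-1}\right)$$
holds for every $j \geq 1$, with equality at $j=1$. For $q \geq 3$ the right-hand side is nonnegative, so it is legitimate to take its $k$-fold product; in the $q = 2$ case the claimed bound is either vacuous (odd $k$) or verified as a negligibly small quantity (even $k$).

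Substituting this uniform bound into Proposition~\ref{prop.pi_k bound} pulls the factor $\left(1 - \frac{\sqrt{q}}{q-1}\right)^k$ outside the sum, and the leftover $q^{j_1 + \cdots + j_k} = q^n$ cancels the $q^n$ in the denominator of $\f(\mathcal{I}_{k,q}) = \sum_{n\geq 1} \pi'_k(n)/(n q^n)$. Interchanging the order of summation (justified by nonnegativity of all terms) then yields
$$\f(\mathcal{I}_{k,q}) \ \geq \ \frac{1}{k!}\left(1 - \frac{\sqrt{q}}{q-1}\right)^k \sum_{j_1,\ldots,j_k \geq 1} \frac{1}{j_1 j_2 \cdots j_k \,(j_1 + \cdots + j_k)}.$$
Theorem~\ref{thm:mordell} applied with $a = 0$ evaluates the multiple sum as $k!\,\zeta(k+1)$, and the two factors of $k!$ cancel to produce the claimed inequality. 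No serious obstacle arises; the only mildly delicate point is the uniformization of the $\pi'_q(j)$ lower bound, which reduces to the elementary observation that $q^{1-j/2} \leq \sqrt{q}$ whenever $j \geq 1$.
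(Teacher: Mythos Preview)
Your proof is correct and follows essentially the same route as the paper: apply Proposition~\ref{prop.pi_k bound}, substitute the lower bound of Proposition~\ref{lower bound of pi'} in the form $\pi'_q(j)\ge \frac{q^j}{j}\bigl(1-\tfrac{q}{q-1}q^{-j/2}\bigr)$, uniformize via $j\ge 1$ to pull out the factor $\bigl(1-\tfrac{\sqrt{q}}{q-1}\bigr)^k$, and invoke Theorem~\ref{thm:mordell} with $a=0$ to evaluate the remaining sum as $k!\,\zeta(k+1)$. The only difference is your explicit remark on the $q=2$ case, which the paper does not discuss; the proposition is in any event only applied for $q\ge 3$.
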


\begin{proof}
By Proposition \ref{prop.pi_k bound}, we have
$$\f(\mathcal{I}_{k,q})  \ \geq \  \frac{1}{k!} \sum_{n=1}^{\infty} \sum_{j_1 + ... + j_k = n} \frac{\pi'_q(j_1)...\pi'_q(j_k)}{n q^n}.$$
Expanding this out using the lower bound from Proposition \ref{lower bound of pi'} gives
$$\f(\mathcal{I}_{k,q})  \ \geq \  \frac{1}{k!} \sum_{n=1}^{\infty} \sum_{j_1 + ... + j_k = n} \frac{(q^{j_1} - \frac{q}{q-1}\cdot q^{j_1/2}) ...(q^{j_k} - \frac{q}{q-1} \cdot q^{j_k/2} )}{n j_1 ... j_k q^n}.$$
Factoring out $q^j$ from each term in the numerator, the right hand sum becomes
$$\f(\mathcal{I}_{k,q})  \ \geq \  \frac{1}{k!} \sum_{n=1}^{\infty} \sum_{j_1 + ... + j_k = n} \frac{(1 - \frac{q}{q-1}\cdot q^{\m j_1/2}) ...(1 - \frac{q}{q-1} \cdot q^{\m j_k/2} )}{n j_1 ... j_k}.$$
As $j_i \geq 1$ for all $i$, we can bound the sum below by 
\begin{align*}
    \frac{1}{k!} \sum_{n=1}^{\infty} \sum_{j_1 + ... + j_k = n} \frac{(1 - \frac{q}{q-1}\cdot q^{\m 1/2})^k}{n j_1 ... j_k} \ &= \ \bigg(1 - \frac{\sqrt{q}}{q-1}\bigg)^k \frac{1}{k!} \sum_{n=1}^{\infty} \sum_{j_1 + ... + j_k = n} \frac{1}{n j_1 ... j_k} \\
    &= \ \bigg(1 - \frac{\sqrt{q}}{q-1}\bigg)^k \ze(k+1)
\end{align*}
where the last step uses Theorem \ref{thm:mordell}. \qedhere

\end{proof}

\subsection{The Banks-Martin Inequality for Fixed $k$}
The upper and lower bounds for $\f(\mathcal{I}_{k,q})$ established in Propositions \ref{upper bound for f(I_k)} and \ref{lower bound for f(I_k)} both approach $\zeta(k+1)$ as $q$ increases. Because $\zeta(n) > \zeta(n+1)$, it follows that for each $k \in \mathbb{N}$, there exists $q_k$ such that the following chain of inequalities holds in $\mathbb{F}_q[x]$ for all $q \geq q_k$:
\[\f(\mathcal{I}_{1,q})  \ > \  \f(\mathcal{I}_{2,q})  \ > \  \ldots  \ > \  \f(\mathcal{I}_{k,q}).\]

In the following theorem, we establish how large $q_k$ must be in order to guarantee that this chain of inequalities will hold.

\begin{theorem} \label{upper bound for q_k}
For each $k \in \N$, there exists an integer $q_k = O \big( k^2 4^k \big)$ such that 
\[\f(\mathcal{I}_{1,q})  \ > \  \f(\mathcal{I}_{2,q})  \ > \  \ldots  \ > \  \f(\mathcal{I}_{k,q})  \]
for all $q \geq q_k$. In particular, we have that $q_k < 4.03 \ \!(k-1)^2 \ \!4^k \ \!\zeta(k)^2$.
\end{theorem}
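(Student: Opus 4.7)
The plan is to apply the lower bound from Proposition \ref{lower bound for f(I_k)} together with the upper bounds from Propositions \ref{upper bound for f(I_k)} and \ref{upper bound for f(I_2)}. It suffices to establish, for each $j \in \{1, \ldots, k-1\}$, the inequality
\[\bigg(1 - \frac{\sqrt{q}}{q-1}\bigg)^j \zeta(j+1) \ > \ \zeta(j+2) + \log\!\bigg(\frac{q}{q-1}\bigg) \zeta(j),\]
replacing the right-hand side with the $\frac{1}{2}\textnormal{Li}_2(1/q)$ version from Proposition \ref{upper bound for f(I_2)} when $j = 1$, and the left-hand side with the bound from Proposition \ref{prop.recipsum} for $\mathcal{F}(\mathcal{I}_q)$ in that same case. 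Because the lower-bound prefactor $(1 - \sqrt{q}/(q-1))^j$ is decreasing in $j$ and the target gap $\zeta(j+1) - \zeta(j+2)$ is also decreasing in $j$, the binding constraint should be at $j = k-1$. I would verify this monotonicity and then focus exclusively on solving the inequality at $j = k-1$.

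For that step, the plan is to divide through by $\zeta(k)$ and rearrange, obtaining
\[1 - \bigg(1 - \frac{\sqrt{q}}{q-1}\bigg)^{k-1} \ < \ \frac{\zeta(k) - \zeta(k+1)}{\zeta(k)} \ - \ \frac{\zeta(k-1)}{\zeta(k)} \log\!\bigg(\frac{q}{q-1}\bigg).\]
Bernoulli's inequality bounds the left side by $(k-1)\sqrt{q}/(q-1)$, and $\log(q/(q-1)) \leq 1/(q-1)$ controls the logarithmic term on the right. These reductions turn the requirement into a straightforward inequality in $q$, which solves to
\[q \ \gtrsim \ \frac{(k-1)^2 \zeta(k)^2}{(\zeta(k) - \zeta(k+1))^2}.\]

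The remaining task is to estimate $\zeta(k) - \zeta(k+1)$ from below. Writing $\zeta(k) - \zeta(k+1) = \sum_{n \geq 2}(n-1)/n^{k+1}$, the leading $n = 2$ term contributes $1/2^{k+1}$, so $\zeta(k) - \zeta(k+1) \geq 1/2^{k+1}$ for all $k \geq 2$. Substituting gives the asymptotic bound $q_k \leq 4 (k-1)^2 \zeta(k)^2 \cdot 4^k \cdot (1 + o(1))$, which matches the claimed $q_k = O(k^2 4^k)$. The surplus constant $0.03$ in the explicit bound $4.03$ is chosen to absorb the cumulative slack from approximating $\sqrt{q}/(q-1)$ by $1/\sqrt{q}$, from Bernoulli's inequality, and from the logarithmic error term, uniformly in $k$.

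The main obstacle I anticipate is tightening the bookkeeping to produce the explicit constant $4.03$ rather than an unspecified $O(\cdot)$. This requires tracking each slack term through the chain of approximations and checking that the surplus factor covers them uniformly for all $k \geq 2$. A secondary but necessary task is verifying the base step $j = 1$ (which uses different propositions from the generic case) and confirming the monotonicity claim that $j = k - 1$ is indeed the binding constraint across all intermediate $j$, at least in the relevant range $q \geq q_k$.
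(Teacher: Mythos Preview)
Your plan is essentially the paper's proof: the same reductions via Bernoulli, $\log\bigl(\tfrac{q}{q-1}\bigr)\le \tfrac{1}{q-1}$, and $\zeta(k)-\zeta(k+1)>2^{-(k+1)}$, leading to the quadratic inequality $q-b\sqrt{q}-a>0$ with $b=(k-1)2^{k+1}\zeta(k)$ and $a=2^{k+1}\zeta(k-1)+1$. On the two points you flag, the paper (i) obtains the constant $4.03$ by bounding $\eta:=a/b^2$ by its value at $k=4$ and computing $2\bigl(1+2\eta+\sqrt{1+4\eta}\bigr)=4.029\ldots$, and (ii) handles the monotonicity in $j$ not via your heuristic but by invoking forward-difference estimates for $\zeta$ from the literature (valid for $q\ge 7$, $j\ge 3$) and checking the remaining small cases directly.
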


\begin{proof}
We first address the case $k = 2$, which must be handled separately because the bounds in Propositions \ref{upper bound for f(I_k)} and \ref{lower bound for f(I_k)} do not apply. Instead, we can use Propositions \ref{prop.recipsum} and \ref{upper bound for f(I_2)}, which, along with the fact that
\[\frac{\pi^2}{6} - \frac{q}{q-1} \textnormal{Li}_2\bigg(\frac{1}{\sqrt{q}}\bigg)  \ > \ 
\zeta(3) + \frac{1}{2} \textnormal{Li}_2\left(\frac{1}{q}\right)\]
for all $q \geq 11$,  are sufficient to prove that $\f(\I_{1, q}) \geq \f(\I_{2, q})$ for all such $q$. 

Now fix a natural number $k \geq 3$. In order to find a value of $q$ such that $\f(\I_{k'-1, q}) > \f(\I_{k', q})$ for all $k' \leq k$, it suffices to find $q$ such that 
\[\bigg(1-\frac{\sqrt{q}}{q-1}\bigg)^{k'-1} \ze(k')  \ > \  \zeta(k'+1) + \log \Big( \frac{q}{q-1} \Big) \zeta(k'-1)\]
for all $k' \leq k$ because of Propositions \ref{upper bound for f(I_k)} and \ref{lower bound for f(I_k)}. Equivalently, we need
\[\bigg(1-\frac{\sqrt{q}}{q-1}\bigg)^{k'-1} \ze(k') - \zeta(k'+1) - \log \Big( \frac{q}{q-1} \Big) \zeta(k'-1)  \ > \  0.\]
It is not too difficult to show that the left hand expression is increasing in $q$, which implies that if the inequality holds for $q_k$ then it holds for all $q > q_k$. It is more challenging to show that the expression is decreasing with respect to $k'$, but this can be accomplished for $q \geq 7$ and $k' \geq 4$ using bounds on the forward difference of the Riemann zeta function, as found in \cite{MR3768303}. Hence if $q \geq 7$ satisfies the inequality when $k' = k$, it will satisfy the inequality for all $4 \leq k' \leq k$. In fact, because the inequality holds when $k = 3$ and $q = 413$, and we have already shown that $\f(\I_{1, q}) \geq \f(\I_{2, q})$ for all $q \geq 11$, the inequality holding for $k' = k$ implies that it holds for all $k' < k$ as long as $q \geq 413$.

Thus our task is as follows: given any $k \geq 4$, we must find a value of $q$ such that
\[\bigg(1-\frac{\sqrt{q}}{q-1}\bigg)^{k-1} \ze(k) - \zeta(k+1) - \log \Big( \frac{q}{q-1} \Big) \zeta(k-1)  \ > \  0.\]
Bounding the coefficient of $\zeta(k)$ above by $\big(1 - (k-1)\frac{\sqrt{q}}{q-1}\big)$, we see it is sufficient to show that
\[\big(\ze(k) - \zeta(k+1)\big) - (k-1)\frac{\sqrt{q}}{q-1}\ze(k) - \log \Big( \frac{q}{q-1} \Big) \zeta(k-1)  \ > \  0.\]
From a special case of the principal result of \cite{MR3768303}, we know that 
\[\zeta(k) - \zeta(k+1)  > \  \frac{1}{2^{k+1}}.\]
Using this and the bound $\log \big( \frac{q}{q-1} \big) \leq \frac{1}{q-1}$, this reduces to showing that
\[\frac{1}{2^{k+1}} - (k-1)\frac{\sqrt{q}}{q-1}\ze(k) - \frac{1}{q-1} \zeta(k-1)  \ > \  0.\]
Clearing out the denominators, we find the equivalent statement 
\[(q-1) - 2^{k+1}(k-1)\ze(k)\sqrt{q} - 2^{k+1}\zeta(k-1)  \ > \  0.\]
Letting $a = 2^{k+1}\zeta(k-1)+1$ and $b = (k-1)2^{k+1}\ \!\zeta(k)$, we need that $q - b\sqrt{q} - a > 0$. By the quadratic formula, this inequality will be true for any
\[q  \ \geq \  \frac{\sqrt{4ab^2 + b^4} + 2a + b^2}{2}.\]
Because we seek an upper bound for $q_k$, we can define the constant $\eta$ to equal $a/b^2$ for $k=4$. Because $a/b^2$ decreases exponentially in $k$, this means $4ab^2 \leq 4\eta b^4$ and $2a \leq 2\e b^2$ for all $k \geq 4$. Explicitly, $\eta = \frac{2^5 \ze(3) + 1}{9 \cdot 2^{10} \ze(4)^2} = 0.00365$.... Then
\begin{align*}
    q_k \ &\leq \  \frac{\sqrt{4\eta b^4 + b^4} + 2\eta b^2 + b^2}{2} \ = \  \frac{b^2}{2} \Big( 1 + 2\eta + \sqrt{1 + 4\eta} \Big) \\
& = \ (k-1)^2 4^k \zeta(k)^2 \cdot 2 \Big( 1 + 2\eta + \sqrt{1 + 4\eta} \Big)
\end{align*}
will also be sufficient.  The fact that $\zeta(k)^2 = O(1)$ gives us that $q_k = O(k^2\ \!4^k)$. The constant evaluates to $4.02919... < 4.03$, which gives us the rest of the theorem.
\end{proof}

\section{Computation of $\f(\I_{k,q})$}

As over the integers, the partial sums of $\f(\I_{k,q})$ converge very slowly once $k \geq 2$.  While it is possible to compute these sums using the technique developed in \cite{lichtman} for the sums over the integers with $k$ prime factors, we develop a new method for estimating the size of the tails of these sums after precomputing the counts of polynomials having at most $k$ factors, all less than some degree $N$.  Experimentally, this method is able to compute the values of these sums much faster and with greater precision.  The key idea will be to use the more general form of Theorem \ref{thm:mordell}.  We start with a formula for computing the number of ``smooth'' polynomials (smooth meaning that all of the divisors have degree smaller than some fixed bound) with a fixed number of divisors.

\subsection{Smooth polynomials with $k$ irreducible factors}
Let $\Psi'_{k,q}(n,m)$ denote the count of monic polynomials of degree $n$ with exactly $k$ irreducible factors all of degree at most $m$.  
\begin{theorem} \label{thm:smoothpolys}
We can compute $\Psi'_k(n,N)$ from the values of $\pi'(i)$ by the formula  
\[\Psi'_{k,q}(n,m) \ = \  \sum_{\substack{\ell_1 + 2\ell_2 + \ldots + m\ell_m = n \\ \ell_1 + \ell_2 + \ldots + \ell_m = k}} \prod_{j=1}^m \binom{\ell_j + \pi'_q(j) - 1}{\ell_j},\]
where each $\ell_i$ is a nonnegative integer.
\end{theorem}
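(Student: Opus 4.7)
The plan is to prove this by direct enumeration using unique factorization in $\F_q[x]$, grouping polynomials by the multiset of degrees of their irreducible factors.

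First, I would observe that every monic polynomial $f \in \F_q[x]$ of degree $n$ with exactly $k$ irreducible factors (counted with multiplicity), all of degree at most $m$, factors uniquely as a product $p_1^{e_1} p_2^{e_2} \cdots p_r^{e_r}$ of distinct monic irreducibles of degree at most $m$. For each such $f$, define $\ell_j$ to be the total number of irreducible factors of degree exactly $j$, counted with multiplicity. Then the constraint that $f$ has exactly $k$ irreducible factors translates to $\ell_1 + \ell_2 + \cdots + \ell_m = k$, and the constraint that $\deg f = n$ translates to $\ell_1 + 2\ell_2 + \cdots + m \ell_m = n$. This gives the indexing set of the outer sum.

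Next, for a fixed profile $(\ell_1,\ldots,\ell_m)$ satisfying these two conditions, I would count the number of monic polynomials producing that profile. The choices at different degrees $j$ are independent by unique factorization, so the count factors as $\prod_{j=1}^m N_j$, where $N_j$ is the number of ways to choose $\ell_j$ monic irreducibles of degree $j$ with repetition allowed (i.e., a multiset of size $\ell_j$ drawn from the $\pi'_q(j)$ monic irreducibles of degree $j$). By the standard stars-and-bars identity, $N_j = \binom{\ell_j + \pi'_q(j) - 1}{\ell_j}$. Multiplying over $j$ and summing over valid profiles yields the claimed formula.

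I do not anticipate a serious obstacle here: the whole argument is a straightforward application of unique factorization combined with the multiset counting formula. The one point requiring slight care is to remember that repetition is allowed (since a polynomial like $p^2$ contributes two irreducible factors of degree $\deg p$), which is why the binomial coefficient takes the "multiset" form $\binom{\ell_j + \pi'_q(j) - 1}{\ell_j}$ rather than the ordinary $\binom{\pi'_q(j)}{\ell_j}$. Once that is noted, the proof is essentially complete.
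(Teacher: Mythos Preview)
Your proposal is correct and follows essentially the same approach as the paper: group the polynomials by the degree profile $(\ell_1,\ldots,\ell_m)$ of their irreducible factors, then count each class by a stars-and-bars multiset count. If anything, your definition of $\ell_j$ (counting with multiplicity) is stated more carefully than the paper's, which is what is actually needed for the formula.
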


\begin{proof}
Fix $n$, $k$ and $m$, and group the polynomials of degree $n$ with $k$ irreducible factors of degree at most $m$ according to the multiplicities of the degrees of their factors. That is, for any polynomial $f$ we define its class by the sequence $\{\ell_j\}$, where $\ell_j := \#\{p \in \I : p|f, \deg p = j\}$. Conversely, each sequence of $\{\ell_j\}$ defines a class of polynomials, which is included in our count if and only if $\{\ell_j\}$ satisfy $\sum_{j=1}^m \ell_j = k$ and $\sum_{j=1}^m j\ell_j = n$.

Now it remains to count the number of polynomials in each such class.  There are $\pi'_q(j)$ monic irreducible polynomials of degree $j$, and we need to choose $\ell_j$ of them with repetition, where order does not matter. The number of ways to select $\ell_j$ such irreducible polynomials of degree $j$ with the potential for repetition is $\binom{\ell_j + \pi'_q(j) - 1}{\ell_j}$.  Multiplying these terms then gives the number of polynomials contained in each class. 
\end{proof}

\subsection{Effective computation of $\mathcal{F}(\mathcal{I}_{k,q})$}  \label{sec:computation}

We can expand on the idea of Section 2.1 to obtain an algorithm which can rapidly compute the value of $\f(\mathcal{I}_{k,q})$ for any $q$ and $k$.  The key idea is to use the full generality of Theorem \ref{thm:mordell} to estimate the size of the tail after approximating with a partial sum.  

In this case, for a fixed $N$ the values of $\pi'_q(n)$ are computed for all $n\leq N$. Let $d(f)$, $D(f)$ and $\Omega(f)$ denote, as before, the degrees of the smallest irreducible factor of $f$, the degree of the largest irreducible factor of $f$ and the total number of irreducible factors of $f$ respectively. We then write 
\begin{align}
    \f(\I_{k,q}) \ = \  \sum_{\substack{f \in \mathcal{I}_{k,q}}} \frac{1}{||f||\deg f} \ &= \ \sum_{\substack{f \in \mathcal{I}_{k,q} \\D(f)\leq N}} \frac{1}{||f||\deg f} + \sum_{\substack{f \in \mathcal{I}_{k,q} \\D(f) > N}} \frac{1}{||f||\deg f}  \label{eq:smoothdecomp}
\end{align} 

The first sum above is computed exactly, using the precomputed values of $\pi'_q(n)$,  as \[S_{k,N,q} \ = \  \sum_{\substack{f \in \mathcal{I}_{k,q} \\D(f)\leq N }} \frac{1}{||f||\deg f} \ = \  \sum_{k \leq n \leq N} \frac{\Psi_{k,q}'(n,N)}{nq^n},\]
where the value of $\Psi_{k,q}'(n,N)$ is computed using Theorem \ref{thm:smoothpolys}.

The second sum will be estimated using a combination of the precomputed values and estimates for the tail using the ``Mordell Sum''
\begin{equation}
    M(k,N,a) \ = \  \sum_{N\leq n_1,n_2,\ldots n_k} \frac{1}{n_1n_2\cdots n_k(n_1+n_2+ \cdots + n_k +a)}.
\end{equation}
Note that when $N=1$, the value of $M(k,1,a)$ is given by Theorem \ref{thm:mordell} as \[M(k,1,a) \ = \  \ k! \sum_{i=0}^\infty \frac{\left(-1\right)^i}{(i+1)^{k+1}} \binom{a-1}{i}, \] which for all $a\geq 0$ will either be a rational number or a multiple of $\zeta(k+1)$.  We can then recursively compute values of this sum for larger values of $N$ using the recurrence 
\begin{align*}
    M(k,N,a) \ &= \ M(k,N-1,a)-\frac{M(k-1,N-1,a+(N-1))}{N-1}+ \cdots \\
    &= \ \sum_{i=0}^k \frac{(-1)^iM(k-i,N-1,a+i(N-1))}{(N-1)^i},
\end{align*} 
which is obtained using an inclusion-exclusion argument over sums where the least allowed term is $(N-1)$.

We start by obtaining an upper bound for the rightmost sum in \eqref{eq:smoothdecomp}. We rewrite this sum as follows,
where $\sum'$ is used to denote that the innermost sum is evaluated over squarefree polynomials.

\begin{align}
  \sum_{\substack{f \in \mathcal{I}_{k,q} \\D(f) > N }} \frac{1}{||f||\deg f} \ &= \ \sum_{i=1}^{k} \sum_{\substack{f \in \mathcal{I}_{k-i,q} \\D(f)\leq N }} \sideset{}{'}\sum_{\substack{g \in \mathcal{I}_{i,q} \\d(g) > N }} \frac{1}{||fg||\deg fg} + \sum_{\substack{f \in \mathcal{I}_{k,q} \\ p^2|f, \ p \in \mathcal{I}_q \\ \deg p >N}}\frac{1}{||f||\deg f} \nonumber \\
  &= \ \sum_{i=1}^{k} \sum_{n\leq(k-i)N} \frac{\Psi'_{k-i,q}(n,N)}{q^n} \sideset{}{'}\sum_{\substack{f \in \mathcal{I}_{i,q} \\d(f) > N }} \frac{1}{||f||(\deg f+n)} + \sum_{\substack{f \in \mathcal{I}_{k,q}\\ p^2|f, \ p \in \mathcal{I}_q \\ \deg p >N}}\frac{1}{||f||\deg f}.\label{eq:sqarefuldecomp}
\end{align}
Consider the left hand sum above.  By the same argument as in the proof of Proposition \ref{prop.piasympgeneral}, we can bound the innermost sum over squarefree polynomials by 
\begin{align}
    \sideset{}{'}\sum_{\substack{f \in \mathcal{I}_{i,q} \\d(f) > N }} \frac{1}{||f||(\deg (f)+n)} \ &\leq \ \frac{1}{i!} \sum_{\substack{N<j_1, \ldots, j_{i}}} \frac{\pi'_q(j_1)\pi'_q(j_2)\ldots\pi'_q(j_{i})}{q^{j_1+j_2+\cdots+j_{i}}(j_1+j_2+\cdots+j_{i}+n)}\nonumber \\
    &\leq \ \frac{1}{i!} \sum_{\substack{N<j_1, \ldots, j_{i}}} \frac{q^{j_1}q^{j_2}\cdots q^{j_{i}}}{q^{j_1+j_2+\cdots+j_{i}}(j_1j_2\cdots j_i)(j_1+j_2+\cdots+j_{i}+n)} \nonumber\\
    &\leq \ \frac{1}{i!} \sum_{\substack{N<j_1, \ldots, j_{i}}} \frac{1}{(j_1j_2\cdots j_i)(j_1+j_2+\cdots+j_{i}+n)}\ = \ \frac{1}{i!} M(i,N+1,n).\label{eq:squarefreebd} 
\end{align}
Here we have used again Proposition \ref{lower bound of pi'} to obtain an upper bound for $\pi'(j)$.  We then bound the rightmost sum of \eqref{eq:sqarefuldecomp} over ``squarefull'' polynomials by
\begin{align*}
    \sum_{\substack{f \in \mathcal{I}_{k,q} \\ p^2|f, \ p \in \mathcal{I}_q \\ \deg p>N}}\frac{1}{||f||\deg f} \ &\leq \sum_{\substack{p \in \mathcal{I}_q \\ \deg p > N}} \frac{1}{q^{2\deg p}} \sum_{\substack{f \in \mathcal{I}_{k-2,q}}} \frac{1}{||f||\deg f}\\
    & = \ \mathcal{F}(\mathcal{I}_{k-2,q})\sum_{N<n} \frac{\pi'_q(n)}{q^{2n}}  \ < \  2\sum_{N<n} \frac{1}{nq^{n}} \ < \ \frac{2}{Nq^{N+1}(1-1/q)} \ < \ \frac{2}{Nq^N}.\\
\end{align*}
Using this and \eqref{eq:squarefreebd} in \eqref{eq:sqarefuldecomp}, we get the bound 
\begin{align}
    \sum_{\substack{f \in \mathcal{I}_{k,q} \\D(f) > N }} \frac{1}{||f||\deg f}  \ &\leq \ \sum_{i=1}^{k} \sum_{n\leq(i-1)N} \frac{1}{i!q^n}\Psi'_{k-i,q}(n,N) M(i,N+1,n) + \frac{2}{Nq^N} \nonumber \\
    &= \ R_{k,N,q} + \frac{2}{Nq^N}.
\end{align}
We can similarly get a lower bound.  We start by writing
\begin{align}
    \sum_{\substack{f \in \mathcal{I}_{k,q} \\D(f) > N }} \frac{1}{||f||\deg f} \ &= \ \sum_{i=1}^{k} \sum_{\substack{f \in \mathcal{I}_{k-i,q} \\D(f)\leq N }}\sum_{\substack{g \in \mathcal{I}_{i,q} \\d(g) > N }} \frac{1}{||fg||\deg fg}\nonumber \\
    &= \ \sum_{i=1}^{k} \sum_{n\leq(k-i)N} \frac{\Psi'_{k-i,q}(n,N)}{q^n} \sum_{\substack{g \in \mathcal{I}_{k,q} \\d(g) > N }} \frac{1}{||g||(\deg g +n)} \nonumber \\
    &\geq \ \sum_{i=1}^{k} \sum_{n\leq(k-i)N} \frac{\Psi'_{k-i,q}(n,N)}{q^n} \frac{1}{i!} \sum_{\substack{N<j_1, \ldots, j_{i}}} \frac{\pi'_q(j_1)\pi'_q(j_2)\ldots\pi'_q(j_{i})}{q^{j_1+j_2+\cdots+j_{i}}(j_1+j_2+\cdots+j_{i}+n)}. \label{eq:roughlowerbd}
\end{align}
This time we don't restrict to squarefree polynomials, and the argument for the lower bound is the same as that of Proposition \ref{prop.pi_k bound}. We use Proposition \ref{lower bound of pi'} to bound this innermost sum from below as
\begin{align}
    \sum_{\substack{N<j_1, \ldots, j_{i}}} \frac{\pi'_q(j_1)\pi'_q(j_2)\ldots\pi'_q(j_{i})}{q^{j_1+j_2+\cdots+j_{i}}(j_1+j_2+\cdots+j_{i}+n)} \ &\geq \ \sum_{\substack{N<j_1, \ldots, j_{i}}} \frac{(q^{j_1} - \frac{q}{q-1}\cdot q^{j_1/2}) ...(q^{j_i} - \frac{q}{q-1} \cdot q^{j_i/2} )}{q^{j_1+j_2+\cdots+j_{i}}j_1 ... j_k(j_1+j_2+\cdots+j_{i}+n) } \nonumber \\
    & = \
     \sum_{\substack{N<j_1, \ldots, j_{i}}} \frac{(1 - \frac{q}{q-1}\cdot q^{-j_1/2}) ...(1 - \frac{q}{q-1} \cdot q^{-j_i/2} )}{j_1 ... j_i(j_1+j_2+\cdots+j_{i}+n) } \nonumber \\
    &> \ \sum_{\substack{N<j_1, \ldots, j_{i}}} \frac{1 - i\frac{q}{q-1}\cdot q^{-N/2}}{j_1 ... j_i(j_1+j_2+\cdots+j_{i}+n) } \nonumber \\
    &= \ \left(1 - i\frac{q}{q-1}\cdot q^{-N/2}\right)M(i,N+1,n).
\end{align}
Inserting this in \eqref{eq:roughlowerbd} gives
\begin{align}
    \sum_{\substack{f \in \mathcal{I}_{k,q} \\D(f) > N}} \frac{1}{||f||\deg f}  \ &\geq \ \sum_{i=1}^{k}\left(1 - \frac{iq^{1-N/2}}{q-1}\right) \sum_{n\leq(i-1)N} \frac{1}{i!q^n}\Psi'_{k-i,q}(n,N) M(i,N+1,n) \nonumber \\
    &= \ R_{k,N,q} - \ \frac{q^{1-N/2}}{q-1}\sum_{i=1}^{k} \sum_{n\leq(i-1)N} \frac{\Psi'_{k-i,q}(n,N) M(i,N+1,n)}{(i-1)!q^n}. \nonumber
\end{align}
Thus we can approximate $\mathcal{F}(\mathcal{I}_{k,q}) \approx S_{k,N,q}+R_{k,N,q}$, with the effective bounds
\begin{equation}
    \frac{q^{1-N/2}}{q-1}\sum_{i=1}^{k} \sum_{n\leq(i-1)N} \frac{\Psi'_{k-i,q}(n,N) M(i,N+1,n)}{(i-1)!q^n}  \ \leq \  \mathcal{F}(\mathcal{I}_{k,q}) - S_{k,N,q} - R_{k,N,q}  \ < \   \frac{2}{Nq^N}. \label{eq:finalnumbounds}
\end{equation} 

\subsection{Numerical Computations} \label{sec:numresults}
Code was written in C++ to compute the bounds in \eqref{eq:finalnumbounds} using MPFR for multiple-precision floating-point computations with correct rounding.  Using $N=80$, we have computed the values of $\mathcal{F}(\mathcal{I}_{k,q})$  for all $k\leq 64$ and $2\leq q<64$ to at least ten decimal places, and we have computed various special cases with more accuracy to higher values of $k$. The results for $q<7$ are documented in the tables of Appendix \ref{app:data}.

From these computations we find that the analogue of the Banks-Martin conjecture is false for $q=2$, $3$, and $4$.  In particular, we see that the sequence $\mathcal{F}(\I_{k,2})$ has a local minimum at $k=4$ of $\mathcal{F}(\I_{4,2})=0.956237\ldots$. Similarly, $\mathcal{F}(\I_{k,3})$ has a local minimum at $k = 6$ of $\mathcal{F}(\I_{6,3})=0.994968\ldots$ and $\mathcal{F}(\I_{k,4})$ has a local minimum at $k=9$ of $\mathcal{F}(\I_{9,4})= 0.999781\ldots$.

It seems highly likely that each of these values is in fact a global minimum, since the values appear to increase monotonically to 1 as $k \to \infty$. In fact, it appears that $1-\mathcal{F}(\I_{k,q}) = O(2^{-k})$ in each case.  This agrees with the observations of Lichtman in the integers, where a global minimum in the sum $\mathcal{F}(\mathcal{P}_k)$ was observed at $k=6$, with similar convergence to 1 as $k \to \infty$ \cite{lichtman}.

Surprisingly for $q\geq 5$, the behaviour appears to be quite different, with the numerical evidence suggesting that the values of $\mathcal{F}(\I_{k,q})$ are monotonically decreasing to 1 as $k \to \infty$.  Again we see the the same $O(2^{-k})$ convergence, just from above.  For $q=5$ we have verified that \[\mathcal{F}(\I_{k,5}) \ > \ \mathcal{F}(\I_{k+1,5}) \ > \ 1\] for all $k<100$.   Based on this evidence, we conjecture that the analogue of the Banks-Martin conjecture over $\F_q[x]$ is still true in these cases.

\begin{conjecture}\label{fut.conj}
For each $q\geq 5$ the inequalities
$$\f(\mathcal{I}_{1,q})  \ > \  \f(\mathcal{I}_{2,q})  \ > \  \ldots  \ > \  \f(\mathcal{I}_{k,q})  \ > \  \f(\mathcal{I}_{k+1,q})\ldots$$
hold for all positive integers $k$.
\end{conjecture}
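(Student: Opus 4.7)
My approach is to separate the problem into two regimes depending on $k$. For each fixed $q\ge 5$, I would choose a threshold $K_q$ and handle the cases $k<K_q$ computationally using the algorithm developed in Section~\ref{sec:computation}, while treating $k\ge K_q$ through a uniform asymptotic argument. Theorem~\ref{upper bound for q_k} already gives such a result in the opposite direction (uniform in $k$ for $q$ large), so the new work is to push a similar analysis down to $q=5$ and, more importantly, to extend it to hold for all sufficiently large $k$ simultaneously rather than just up to a bounded cut-off.

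For the asymptotic regime, I would work with the generating function
\[\sum_{k\ge 0}\f(\mathcal{I}_{k,q})\,t^{k}
\ = \ \int_0^1\frac{1}{x}\left(\prod_{p\in \mathcal{I}_q}\frac{1}{1-tx^{\deg p}/\|p\|}-1\right)dx,\]
which follows from $\tfrac{1}{n}=\int_0^1 x^{n-1}dx$ and the Euler product identity. For each fixed $x\in(0,1)$, the Euler product inside the integral has its dominant singularity in $t$ at $t=q/x$, coming from the $q$ monic linear polynomials. Singularity analysis (tempered by the accumulation of further poles at $t=(q/x)^{\deg p}$, handled via a Selberg--Delange-style contour shift) should yield a precise asymptotic for $H_k(x):=[t^k]\prod_p(1-tx^{\deg p}/\|p\|)^{-1}$, and integrating against $dx/x$ should then produce an expansion of the form $\f(\mathcal{I}_{k,q})=1+\alpha_q r_q^{k}+O(r_q^{k}/k)$, where $r_q$ is a decay rate and $\alpha_q$ a constant whose sign governs whether the sequence approaches $1$ from above or below. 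The expectation, consistent with the $O(2^{-k})$ convergence observed numerically, is that $\alpha_q>0$ and $r_q$ is close to $1/2$ once $q\ge 5$, so $\f(\mathcal{I}_{k,q})-\f(\mathcal{I}_{k+1,q})\sim\alpha_q r_q^k(1-r_q)$ is positive.

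The main obstacle is converting this heuristic into rigorous, fully effective inequalities valid for every $k\ge K_q$ rather than only in the limit. Two subtleties appear essential: first, the ``regular part'' $B(t,x):=\prod_{\deg p\ge 2}(1-tx^{\deg p}/\|p\|)^{-1}$ itself diverges as $x\to 1$, because infinitely many further poles accumulate at the same location as the dominant one and must be treated as a cluster rather than as an isolated pole; second, the qualitative transition between $q=4$ (where the sequence has a minimum) and $q=5$ (where it is conjecturally monotonic) must be visible in the constants $\alpha_q$ and $r_q$, which forces the analysis to be quantitative rather than merely asymptotic in $q$. If these obstacles can be overcome, the resulting bound for $\f(\mathcal{I}_{k,q})-1$ can be combined with the Mordell-sum bounds of Propositions~\ref{upper bound for f(I_k)} and~\ref{lower bound for f(I_k)} to patch together the intermediate range, while direct high-precision computation via Section~\ref{sec:computation} handles the remaining small values of $k$.
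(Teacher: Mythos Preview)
The statement you are attempting to prove is Conjecture~\ref{fut.conj}, and the paper does not prove it. It is presented explicitly as an open conjecture, supported only by the numerical computations of Section~\ref{sec:numresults} (in particular the verification that $\f(\mathcal{I}_{k,5})>\f(\mathcal{I}_{k+1,5})>1$ for all $k<100$). There is therefore no proof in the paper against which to compare your proposal.

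As for the proposal itself: it is a research outline, not a proof, and you say so yourself (``If these obstacles can be overcome\ldots''). The generating-function setup is correct, and the idea of isolating the contribution of the $q$ linear irreducibles as a dominant singularity is natural. But the two obstacles you flag are exactly the ones that make this a genuine open problem rather than an exercise. The accumulation of poles as $x\to 1$ means you do not have a clean meromorphic structure to apply standard singularity analysis to, and the fact that the conjecture is known to fail for $q\le 4$ means any argument must be sharp enough to see a sign change in the leading constant between $q=4$ and $q=5$; a soft asymptotic in $q$ cannot do this. Your plan to patch the small-$k$ range computationally is fine in principle, but it presupposes that the asymptotic regime has been handled with an explicit, finite threshold $K_q$, and nothing in the proposal indicates how to obtain one. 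In short, the approach is plausible as a line of attack, but as written it does not close the gap, and the paper makes no claim to have closed it either.
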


\renewcommand{\biblistfont}{\normalfont\normalsize}
\bibliographystyle{amsplain}
\bibliography{sources}
\nocite{*}

\pagebreak

\appendix
\label{app:data}
\section{Numerical Data on $\f(\I_{k,q})$}

\scriptsize

\input apnum
\apFRAC=50
\def\outputdigs {8}

\def\ep#1{$
\def\origval {#1}
\apDIG\origval\apnumA
\def\mynumD{\apnumA}
\evaldef\newval{#1+0}
\apDIG\newval\apnumB
\def\mynewnumD{\apnumB}
\newcount\myoutputdigs
\myoutputdigs=\apnumA
\evaldef\foo{#1-1}
\ifnum\apSIGN>0 
  \def\mysign {+} 
  \evaldef\nfo{\iFLOOR{\LN{#1-1}/\LN{10}}}
\else 
  \def\mysign {}
  \evaldef\nfo{\iFLOOR{\LN{1-#1}/\LN{10}}}
\fi
\advance\myoutputdigs\nfo
\evaldef\nfoo{\foo/10^{\nfo}}
\ifnum\myoutputdigs>\outputdigs
   \apROUND\nfoo{\outputdigs}
   \ifnum\mynumD>\mynewnumD
      \apnumZ=\outputdigs
      \advance\apnumZ-\mynewnumD
      \advance\apnumZ-\nfo
      \ifnum\apnumZ>0
         \apADDzeros\nfoo
      \fi
   \fi
\else
   \apnumZ=\mynumD
   \advance\apnumZ-\mynewnumD
   \ifnum\apnumZ>0
      \apADDzeros\nfoo
   \fi
\fi
\hspace{0mm}
1 \mysign \nfoo \times 10^{\nfo}$}

\begin{table}[h!]
    \centering
    \hspace{-5mm}\begin{tabular}{c|l|l|l|l|l}
$k$ &$\f(\I_{k,2})$&$\f(\I_{k,3})$& $\f(\I_{k,4})$& $\f(\I_{k,5})$&$\f(\I_{k,7})$ \\
\hline
1 & 1.4676602238442289268 & 1.5402654962770992783& 1.5708306089585806605& 1.5876369878229405564 & 1.6055616864329830894 \\
2 & 1.0644425954143168595& 1.1301714500071343633& 1.1544864845853626474&  1.1668343411440889017 & 1.1790969073890668757 \\
3 & 0.9755638525263773555& 1.0329809138654179703& 1.0517959064091933064&  1.0606722482320695710 & 1.0689297642298799167 \\
4 & 0.9562373433151932108& 1.0039698809027713378& 1.0178327413536777409&  1.0239276909306761841 & 1.0292662613922721641 \\
5 & 0.9581408226316153830& 0.9960179423616558785& 1.0057528618201179388&  1.0097501408648004439 & 1.0130607223966467259 \\
6 & 0.9661285846774159333& 0.9949687972770260308& 1.0015148661835156763&  1.0040299319147160468 & 1.0060072704223504918 \\
7 & 0.9747368549520022143& 0.9959150552841082468& 1.0001513629475453519&  1.0016773165460739756 & 1.0028205606817957574 \\
8 & 0.9820875563671306239& 0.9971537408436136635& 0.9998044985849281472&  $1.0007015961030813973$ & 1.0013445588428900262 \\
9 & 0.9877477647269600411& 0.9981715655684219998& 0.9997818901532824166&  $1.0002951481314120617$ & 1.0006484376681192577 \\
10& 0.9918478580517178761& 0.9988850772260466434& 0.9998382721719850807&  $1.0001251569695533427$ & 1.0003155548064037100 \\
11& 0.9946958995719092591& 0.9993449618001374514& 0.9998964608075082070&  $1.0000536550878474023$ & 1.0001546392373837702 \\
12& 0.9966129963802004602& 0.9996258781376391880& 0.9999386341284465735&  $1.0000233235195754782$ & 1.0000761897594317088 \\
13& 0.9978716414731367847& 0.9997910525523849739& 0.9999653236912439819&  $1.0000103050740808725$ & 1.0000376910537044268 \\
14& 0.9986811586590295149& 0.9998854079068883995& 0.9999810267336057817&  $1.0000046345106376206$ & 1.0000187023114699499 \\
15& 0.9991928200040364980& 0.9999380941647140707& 0.9999898540553063995&  $1.0000021220772785316$ & 1.0000093007543809601 \\
16& 0.9995113928362548792& 0.9999669753501071136& 0.9999946649895369026&  $1.0000009884203117771$ & 1.0000046327688406045 \\
17& 0.9997071495459197712& 0.9999825683197510197& 0.9999972297481776256&  $1.0000004675067009067$ & 1.0000023102741326004 \\
18& 0.9998260439229729488& 0.9999908809671412008& 0.9999985750920124071&  $1.0000002240397871479$ & 1.0000011530296441410 \\
19& 0.9998975071335135411& 0.9999952655907561082& 0.9999992723231812274&  $1.0000001085215464210$ & 1.0000005757919164950 \\
20& 0.9999400604052216966& 0.9999975577054795509& 0.9999996304000129789&  $1.0000000530122567057$ & 1.0000002876491601127 \\
21& 0.9999651849121454024& 0.9999987469201610398& 0.9999998130414889626&  $1.0000000260642202254$ & $1.0000001437406709444$ \\
22& 0.9999799048973557495& 0.9999993600015746679& 0.9999999057197182304&  $1.0000000128768650727$ & $1.0000000718419078680$ \\
23& 0.9999884683698132380& 0.9999996743758878555& 0.9999999525652106483&  $1+6.38426063\times 10^{-9}$  & $1.0000000359113517749$ \\
24& 0.9999934180456298710& 0.9999998348554963871& 0.9999999761751077814&  $1+3.17334132\times 10^{-9}$  & $1.0000000179524406680$ \\
25& 0.9999962619151749007& 0.9999999164676982231& 0.9999999880486329738&  $1+1.58018897\times 10^{-9}$  & $1+8.97513258\times 10^{-9}$ \\
26& 0.9999978868847903109& 0.9999999578414305089& \ep{0.99999999401032400167805170343356478}&  $1+7.87869180\times 10^{-10}$ & $1+4.48720120\times 10^{-9}$ \\
27& 0.9999988106438072540& 0.9999999787613648921& \ep{0.99999999700016446390809329788146931}&  $1+3.93173826\times 10^{-10}$ & $1+2.24347823\times 10^{-9}$ \\
28& 0.9999993332889212801& 0.9999999893163745066& \ep{0.99999999849830814589668351627982141}&  $1+1.96327799\times 10^{-10}$ & $1+1.12169815\times 10^{-9}$ \\
29& 0.9999996276842568036& \ep{0.99999999463238650619207000903880746}& \ep{0.99999999924852782534224093518011180}&  $1+9.80759342\times 10^{-11}$ & $1+5.60835373\times 10^{-10}$ \\
30& 0.9999997928264723852& \ep{0.99999999730589904168365258469840121}& \ep{0.99999999962404399843442291708942366}&  $1+4.90081887\times 10^{-11}$ & $1+2.80413107\times 10^{-10}$ \\
31& 0.9999998851056604170& \ep{0.99999999864886375119672981046174231}& \ep{0.99999999981194514488401094265328192}&  $1+2.44940385\times 10^{-11}$ & $1+1.40205024\times 10^{-10}$ \\
32& 0.9999999364831266058& \ep{0.99999999932281828860672310974899083}& \ep{0.99999999990594583299411194145404890}&  $1+1.22436306\times 10^{-11}$ & $1+7.01020012\times 10^{-11}$ \\
33& 0.9999999649907064879& \ep{0.99999999966077489794753525400280151}& \ep{0.99999999995296365125080805104867327}&  $1+6.12067553\times 10^{-12}$ & $1+3.50508301\times 10^{-11}$ \\
34& 0.9999999807578865298& \ep{0.99999999983013926573944592127047924}& \ep{0.99999999997647862729019710253853186}&  $1+3.05995497\times 10^{-12}$ & $1+1.75253582\times 10^{-11}$ \\
35& 0.9999999894521671497& \ep{0.99999999991497293195355973684855028}& \ep{0.99999999998823821340708499185386399}&  $1+1.52984909\times 10^{-12}$ & $1+8.76266014\times 10^{-12}$ \\
36& \ep{0.9999999942326596671}& \ep{0.99999999995744894095973019799852647}& \ep{0.99999999999411872941836848495782572}&  $1+7.64881531\times 10^{-13}$ & $1+4.38132374\times 10^{-12}$ \\
37& \ep{0.9999999968540914682}& \ep{0.99999999997870996520318808855654319}& \ep{0.99999999999705923570908588386669822}&  $1+3.82426367\times 10^{-13}$ & $1+2.19065976\times 10^{-12}$ \\
38& \ep{0.9999999982879085152}& \ep{0.99999999998934939631452506999770194}& \ep{0.99999999999852957386426337321610345}&  $1+1.91208367\times 10^{-13}$ & $1+1.09532917\times 10^{-12}$ \\
39& \ep{0.9999999990702489384}& \ep{0.99999999999467255435369961772927814}& \ep{0.99999999999926477196730520477595758}&  $1+9.56025742\times 10^{-14}$ & $1+5.47664354\times 10^{-13}$ \\
40& \ep{0.99999999949613947874}& \ep{0.99999999999733545724245376337332318}& \ep{0.99999999999963238090404814081501463}&  $1+4.78007493\times 10^{-14}$ & $1+2.73832098\times 10^{-13}$ \\
41& \ep{0.9999999997274790292}& \ep{0.99999999999866741603538373041696105}& \ep{0.99999999999981618873126651831425969}&  $1+2.39001951\times 10^{-14}$ & $1+1.36916023\times 10^{-13}$ \\
42& \ep{0.99999999985287899831}& \ep{0.99999999999933358921796384316241914}& \ep{0.99999999999990809378376717665964599}&  $1+1.19500376\times 10^{-14}$ & $1+6.84580029\times 10^{-14}$ \\
43& \ep{0.9999999999207186197}& \ep{0.99999999999966674959226351870846820}& \ep{0.99999999999995404669544992810778873}&  $1+5.97499880\times 10^{-15}$ & $1+3.42289985\times 10^{-14}$ \\
44& \ep{0.99999999995734953268}& \ep{0.99999999999983335778639943694981556}& \ep{0.99999999999997702328150812465251951}&  $1+2.98749272\times 10^{-15}$ & $1+1.71144983\times 10^{-14}$ \\
45& \ep{0.99999999997709319249}& \ep{0.99999999999991667248351965828718754}& \ep{0.99999999999998851161846225289670007}&  $1+1.49374413\times 10^{-15}$ & $1+8.55724884\times 10^{-15}$ \\
46& \ep{0.99999999998771645696}& \ep{0.99999999999995833383275544077545592}& \ep{0.99999999999999425580173553395343683}&  $1+7.46871326\times 10^{-16}$ & $1+4.27862431\times 10^{-15}$ \\
47& \ep{0.9999999999934229771010}& \ep{0.9999999999999791660132572011333951}& \ep{0.99999999999999712789835006184055815}&  $1+3.73435415\times 10^{-16}$ & $1+2.13931212\times 10^{-15}$ \\
48& \ep{0.99999999999648351887168}& \ep{0.9999999999999895826688728280150586}& \ep{0.99999999999999856394833015718290294}&  $1+1.86717625\times 10^{-16}$ & $1+1.06965604\times 10^{-15}$ \\
49& \ep{0.9999999999981224758858}& \ep{0.9999999999999947912084137655495553}& \ep{0.99999999999999928197388180102780763}&  $1+9.33587851\times 10^{-17}$ & $1+5.34828020\times 10^{-16}$ \\
50& \ep{0.9999999999989988833782}& \ep{0.9999999999999973955572908956584618}& \ep{0.99999999999999964098684599155750495}&  $1+4.66793833\times 10^{-17}$ & $1+2.67414008\times 10^{-16}$ \\
51& \ep{0.999999999999466876258444}& \ep{0.9999999999999986977612171386759606}& \ep{0.99999999999999982049339121868233063}&  $1+2.33396886\times 10^{-17}$ & $1+1.33707004\times 10^{-16}$ \\
52& \ep{0.999999999999716445445625}& \ep{0.9999999999999993488741477788880708}& \ep{0.99999999999999991024668497606876226}&  $1+1.16698432\times 10^{-17}$ & $1+6.68535018\times 10^{-17}$ \\
53& \ep{0.999999999999849363441523}& \ep{0.9999999999999996744346836346499651}& \ep{0.99999999999999995512333893175611574}&  $1+5.83492130\times 10^{-18}$ & $1+3.34267508\times 10^{-17}$ \\
54& \ep{0.999999999999920066628709}& \ep{0.9999999999999998372164592224338230}& \ep{0.99999999999999997756166827702593676}&  $1+2.91746053\times 10^{-18}$ & $1+1.67133754\times 10^{-17}$ \\  
55& \ep{0.999999999999957631049883}& \ep{0.9999999999999999186079043245177889}& \ep{0.99999999999999998878083374124128564}&  $1+1.45873023\times 10^{-18}$ & $1+8.35668770\times 10^{-18}$ \\
56& \ep{0.999999999999977566036195}& \ep{0.9999999999999999593038324919292042}& \ep{0.99999999999999999439041673791274082}&  $1+7.29365103\times 10^{-19}$ & $1+4.17834385\times 10^{-18}$ \\
57& \ep{0.999999999999988133596385}& \ep{0.9999999999999999796518722970424464}& \ep{0.99999999999999999719520832463892943}&  $1+3.64682547\times 10^{-19}$ & $1+2.08917192\times 10^{-18}$ \\
58& \ep{0.999999999999993729498912}& \ep{0.9999999999999999898259200355157356}& \ep{0.99999999999999999859760414752366627}&  $1+1.82341272\times 10^{-19}$ & $1+1.04458596\times 10^{-18}$ \\
59& \ep{0.999999999999996689680625}& \ep{0.9999999999999999949129541198861500}& \ep{0.99999999999999999929880206882324160}&  $1+9.11706357\times 10^{-20}$ & $1+5.22292981\times 10^{-19}$ \\
60& \ep{0.999999999999998254033763}& \ep{0.9999999999999999974564749045460284}& \ep{0.99999999999999999964940103276352630}&  $1+4.55853177\times 10^{-20}$ & $1+2.61146490\times 10^{-19}$ \\
61& \ep{0.999999999999999079946247}& \ep{0.9999999999999999987282366657812932}& \ep{0.99999999999999999982470051583185883}&  $1+2.27926587\times 10^{-20}$ & $1+1.30573245\times 10^{-19}$ \\
62& \ep{0.999999999999999515588468}& \ep{0.9999999999999999993641180463299146}& \ep{0.99999999999999999991235025773247489}&  $1+1.13963293\times 10^{-20}$ & $1+6.52866226\times 10^{-20}$ \\
63& \ep{0.999999999999999745169403}& \ep{0.9999999999999999996820589189056896}& \ep{0.99999999999999999995617512880504256}&  $1+5.69816468\times 10^{-21}$ & $1+3.26433113\times 10^{-20}$\\
64& \ep{0.999999999999999866052115068}& \ep{0.9999999999999999998410294215730967}& \ep{0.99999999999999999997808756438211072}&  $1+2.84908234\times 10^{-21}$ & $1+1.63216556\times 10^{-20}$\\
65& \ep{0.999999999999999929647717163}& \ep{0.9999999999999999999205146970425831}& \ep{0.99999999999999999998904378218424836}&  $1+1.42454116\times 10^{-21}$ & $1+8.16082782\times 10^{-21}$\\
66& \ep{0.999999999999999963077683695}& \ep{0.9999999999999999999602573435410900}& \ep{0.99999999999999999999452189108985418}&  $1+7.12270584\times 10^{-22}$ & $1+4.08041391\times 10^{-21}$\\
67& \ep{0.999999999999999980636761142}& \ep{0.9999999999999999999801286699682398}& \ep{0.99999999999999999999726094554417013}&  $1+3.56135292\times 10^{-22}$ & $1+2.04020695\times 10^{-21}$\\
68& \ep{0.999999999999999989852597987}& \ep{0.9999999999999999999900643343326832}& \ep{0.99999999999999999999863047277183264}&  $1+1.78067646\times 10^{-22}$ & $1+1.02010347\times 10^{-21}$\\
69& \ep{0.999999999999999994685913715}& \ep{0.9999999999999999999950321669311651} & \ep{0.99999999999999999999931523638583214}&  $1+8.90338230\times 10^{-23}$& $1+5.10051739\times 10^{-22}$\\
70& \ep{0.999999999999999997218955756}& \ep{0.9999999999999999999975160833807801}& \ep{0.99999999999999999999965761819288798}&  $1+4.45169115\times 10^{-23}$& $1+2.55025869\times 10^{-22}$\\
    \end{tabular}
    \caption{\small Values of $\f(\I_{k,q})$ computed using the algorithm in Section \ref{sec:computation}.  To obtain these values we used $N=200$ ($q=2$), $N=150$ ($q=3,4$) and $N=110$ ($q=5,7$), and a precision of 256 bits.  Each number is accurate to as many decimal places as displayed and is truncated (not rounded).\vspace{-5mm}}
    \label{tab:my_label}
\end{table}
\vspace{-15mm}

\normalsize

\end{document}